\documentclass[11pt]{article}
\usepackage[utf8]{inputenc}
\usepackage[british]{babel}
\usepackage{csquotes}
\usepackage{amsmath,amsthm,amssymb,amsfonts}
\usepackage{mathtools}
\usepackage{mathrsfs}

\usepackage[shortlabels]{enumitem}
\usepackage[overload]{empheq}

\usepackage[unicode,colorlinks=true,pagebackref=false,linkcolor=blue,citecolor=green,urlcolor=black]{hyperref}

\usepackage{algorithm}
\usepackage{algorithmic}
\usepackage{dsfont}
\usepackage[justification=centering,singlelinecheck=false]{caption}
\usepackage{subcaption}
\usepackage{graphicx}

\allowdisplaybreaks

\theoremstyle{plain}
\newtheorem{lemma}{Lemma}[section] 

\theoremstyle{definition}
\newtheorem{definition}[lemma]{Definition}

\theoremstyle{remark}
\newtheorem{remark}{Remark}[section] 

\theoremstyle{plain}
\newtheorem{theorem}[lemma]{Theorem}

\theoremstyle{plain}
\newtheorem{corollary}[lemma]{Corollary}

\theoremstyle{plain}
\newtheorem{assumption}[lemma]{Assumption}

\theoremstyle{plain}
\newtheorem{proposition}[lemma]{Proposition}

\theoremstyle{remark}
\newtheorem{example}{Example}

\numberwithin{equation}{section}

\def\cC{\mathcal{C}}
\def\cD{\mathcal{D}}

\def\cG{\mathcal{G}}

\def\cK{\mathcal{K}}

\def\cP{\mathcal{P}}

\def\cV{\mathcal{V}}

\def\sE{{\mathbb{E}}}

\def\sG{{\mathbb{G}}}

\def\sN{{\mathbb{N}}}

\def\sR{{\mathbb R}}
\def\sS{{\mathbb{S}}}
\def\sV{{\mathbb{V}}}

\DeclareMathOperator*{\esssup}{ess\,sup}

\title{
Learning 
Distributed  Equilibria in  Linear-Quadratic Stochastic Differential    Games: An $\alpha$-Potential Approach}
\author{Philipp Plank\thanks{Department of Mathematics, Imperial College London,  London,  UK  ({\tt p.plank24@imperial.ac.uk, yufei.zhang@imperial.ac.uk})} \and Yufei Zhang\footnotemark[1]}
\date{ }

\begin{document}

\maketitle

\begin{abstract}

We analyze independent policy-gradient (PG) learning in $N$-player linear-quadratic (LQ) stochastic differential games. Each player employs a distributed policy that depends only on its own state and updates the policy independently using the gradient of its own objective.
We establish   global linear convergence of these methods to an equilibrium by showing that the LQ game admits an $\alpha$-potential structure, with $\alpha$     determined by the degree of pairwise interaction asymmetry.
For pairwise-symmetric interactions, we   construct an affine distributed equilibrium  by minimizing the potential function and show that independent PG methods converge globally to this equilibrium, with    complexity scaling linearly in the population size and logarithmically in  the desired   accuracy. For   asymmetric interactions, we 
prove that independent projected PG algorithms converge linearly to an approximate equilibrium, with suboptimality   proportional  to the degree of asymmetry. Numerical experiments confirm the theoretical results across both symmetric and asymmetric interaction networks.
 
\end{abstract}

\medskip

\noindent
\textbf{Keywords.} 
Linear-quadratic stochastic differential game,
distributed equilibria,
$\alpha$-potential game, 
independent learning, 
policy gradient, global convergence
\medskip

\noindent
\textbf{AMS subject classifications.} 91A06, 91A14, 91A15, 91A43, 49N10, 68Q25


\section{Introduction}

Can multi-agent reinforcement learning (MARL) algorithms reliably and efficiently  learn Nash equilibria (NEs) in  
$N$-player non-cooperative stochastic differential games? 
These games model strategic interactions among multiple players,   each   controlling   a stochastic differential system and optimizing an objective influenced by the actions of all   players.
They arise naturally   in diverse fields, including   autonomous driving \cite{di2025alpha}, neural science \cite{carmona2023synchronization}, ecology \cite{santambrogio2021cucker}, and optimal trading  \cite{neuman2023trading}.
  A central goal is the computation of NEs, policy profiles from which no player can improve its payoff through unilateral deviation. 
In many settings, such equilibria are analytically intractable, motivating   growing interest in learning-based approaches that approximate NEs from data collected through repeated interactions with the environment.

  Despite this promise, MARL algorithms with theoretical performance guarantees for stochastic differential games are still limited,   due to three fundamental challenges inherent in multi-agent interactions. First, 
  scalability becomes a critical issue even with a moderate number of players, 
  as the complexity of the joint strategy space grows exponentially with the population size \cite{hernandez2019survey,gu2021mean}. 
 Second, each individual player faces a non-stationary environment, as other players are simultaneously learning and adapting their policies. The analysis of MARL algorithms thus necessitates novel game-theoretic techniques beyond the single-agent setting.  
   Third, differential games typically involve continuous time and continuous state or action spaces, requiring the use of function approximation for policies. The choice of policy parameterization can significantly affect both the efficiency and convergence of MARL algorithms.

As an initial step to tackle the aforementioned challenges, 
this work   investigates the convergence of   policy gradient (PG) algorithms
for  $N$-player
linear-quadratic (LQ)    games. 
LQ games 
  play  a fundamental role in dynamic game theory, and   serve as benchmark problems for examining the performance of MARL algorithms \cite{mazumdar2020policy,hambly2023policy,hosseinirad2026linear,plank2025policy}.
  Despite their relative tractability, existing work shows that   gradient-based learning algorithms may fail to converge to NEs \cite{mazumdar2020policy}, or converge only under restrictive contraction conditions that do not scale well with the population size or the time horizon  \cite{hambly2023policy,plank2025policy}.

\paragraph{Independent learning with distributed policies.}
To ensure scalability, we adopt \emph{distributed} (also known  as decentralized) policies and focus on \emph{independent} PG methods. A distributed policy 
means that each player bases its feedback control solely on its own state, without considering the states of other players. This approach reduces the need for inter-player communication and simplifies the policy parameterization.  Independent PG algorithms further assume that each player updates its policy   independently and concurrently by following the gradient of its own objective. Combining independent learning with distributed policies ensures that the computational complexity of the algorithm scales \emph{linearly} with the number of players.

Although distributed policies are widely used in  MARL \cite{hernandez2019survey}, the existence and characterization of NEs in the resulting $N$-player games have not been rigorously studied,
and no theoretical guarantees currently exist for the corresponding PG methods.
An   exception arises in mean field games, where players are assumed to   interact \emph{symmetrically} and \emph{weakly} through empirical measures. Under this assumption, approximate distributed NEs for $N$-player games can be constructed via a limiting continuum model as $N\to\infty$ (see e.g., \cite{carmona_probabilistic_2018_I, lauriere2022learning, plank2025policy}). 
However, 
in many realistic settings,  players are not exchangeable and instead   interact   only with  subsets of players  specified  by   an underlying network. Moreover, the influence of each player on others may not vanish as the population size grows \cite{di2025alpha}. 
This motivates the study of general interaction structures  under which distributed NEs can be   characterized and learned.

\paragraph{Our contributions.}
 This work provides non-asymptotic performance guarantees for independent PG algorithms in approximating distributed NEs in a class of finite-horizon  $N$-player LQ games, 
 inspired by flocking models \cite{lacker2022case,guo2025alpha} and opinion formation models \cite{bindel2015bad}. 
In this game, each player $i$ chooses a distributed policy to linearly control the drift of its state dynamics and   minimizes a 
cost that is quadratic in its own control and in the states of all players, 
 with the influence of the product of players
  $j$ and $k$'s states  weighted by   $Q^i_{k,j}$ (see \eqref{eq:state_dynamics}-\eqref{eq:cost_generalsetting} for precise definitions).

We analyze this LQ game  and  the  associated learning algorithms by extending the $\alpha$-potential game framework developed in \cite{guo2025alpha, guo2025markov, guo2025distributed} to closed-loop games with distributed policies. In an $\alpha$-potential game,
when a player changes its strategy, the resulting change in its objective aligns with  the change in an $\alpha$-potential function up to an error  $\alpha$.

\begin{itemize}
    \item 
    We prove that the LQ game is an $\alpha$-potential game, with an $\alpha$-potential function formulated as a distributed LQ control problem. The parameter $\alpha$ is explicitly characterized by the product of the constant
$C_Q=\max_{1\le i\le   N}\sum_{j=1}^N  |Q^i_{i,j}-Q^j_{j,i}|$
    and   bounds on the first and second moments of the state dynamics induced by admissible policies (Proposition \ref{prop:Phi_is_alpha_potential}). 
\item When $C_Q = 0$, i.e., the interaction is  \emph{pairwise symmetric}, we prove that the game admits an   affine equilibrium,
given by the minimizer of the  potential function 
(Theorem \ref{theorem:optimalpolicy}). In this equilibrium, each player’s action depends linearly on the deviation of its own state from its expected value, adjusted by a time-dependent intercept term. 

We further prove that an independent PG algorithm, in which each player updates the slope and intercept parameters of its policy independently via gradient descent, converges globally to the NE policy at a linear rate. 
This implies that the algorithm’s complexity to achieve an error of $\varepsilon$  scales \emph{linearly} with   the population size  $N$  and  with  $\log(1/\varepsilon)$. 
The proof leverages the potential structure by interpreting the   learning algorithm  as a PG method applied to the potential function and by exploiting the geometry of the potential function with respect to the policy profiles (Propositions \ref{prop:graddom_K} and \ref{prop:strongconvex_lipschitz_G}).

\item 

For general interactions with  $C_Q>0$, we characterize affine equilibria through a forward-backward ordinary differential equation (ODE) system (Theorem \ref{theorem:optimalpolicy_asymmetric}) and provide sufficient conditions for the existence of an NE  (Proposition \ref{prop:ODE_system_asym_sol}). We further prove that an independent projected PG algorithm converges linearly to an approximate NE, with the approximation error scaling proportionally with the constant $C_Q$ (Theorem \ref{theorem:convergence_asymmetric_all}). 
The proof relies on interpreting the learning algorithm as a biased PG method for the $\alpha$-potential function and on quantifying the bias through a stability analysis of the associated ODEs.

\end{itemize} 

To the best of our knowledge, this work provides the first global convergence result for independent PG  methods in continuous-time LQ games with general non-exchangeable interactions.  

\paragraph{Challenges beyond existing works.}
The characterization of distributed NEs   requires novel techniques beyond standard LQ games with full-information policies. When  $C_Q=0$, we construct  an NE   by minimizing the potential function. This reduces to a distributed control problem, which we solve using a dynamic programming approach as in \cite{jacksonlacker2025approximately}. Specifically, we lift the problem to a McKean-Vlasov control problem over the space of product measures and solve the resulting infinite-dimensional Hamilton-Jacobi-Bellman (HJB) equations.
In the general case with $C_Q>0$, we characterize the NE   by a coupled system, consisting of $N$ backward HJB equations, which determine each player’s optimal policy given other players' state distributions, and $N$ forward Fokker-Planck equations, which describe  the state evolution   under these policies. 

Analyzing independent PG  methods
for LQ games also presents challenges beyond those in prior studies of discrete-time Markov ($\alpha$-)potential games with finite state and action spaces (see e.g.,
\cite{ding2022independent, fox2022independent,
leonardos2022global,
zhang2023markov, jordan2024independent,  maheshwari2025independent, guo2025markov}). First, existing analyses typically quantify algorithm convergence in terms of the cardinality of the state and action spaces. This approach is   inapplicable to  continuous-time LQ games, which involve continuous state and action spaces and policies that vary continuously with time.  
In fact, even for discrete-time LQ games with distributed policies, existing results only establish asymptotic convergence of PG methods to saddle points, rather than to NEs, and do not provide convergence rates \cite{hosseinirad2026linear}. We substantially strengthen these results by proving linear convergence to NEs. Achieving this requires a more refined landscape analysis that characterizes the geometry of the cost functions with respect to the policy parameters, 
which is enabled by an appropriate choice of policy parameterization (Remark \ref{rmk:policy_parameterization}).

Second, analyzing algorithm convergence for  $\alpha>0$ is technically involved. 
In this setting, the gradient of each player’s objective constitutes a biased gradient of the $\alpha$-potential function, with the bias scaling with $\alpha$. Moreover, as shown in Proposition \ref{prop:Phi_is_alpha_potential}, when $C_Q>0$,  the magnitude of $\alpha$  depends critically on the regularity of the policies. It is therefore essential to ensure that this policy regularity is preserved uniformly throughout the learning process and to control how the bias propagates across iterations.

\paragraph{Notation.}
For a Euclidean space $E$, we denote by  $\langle \cdot, \cdot \rangle$ the standard scalar product, and by $\lvert \cdot \vert$  the induced norm. For a matrix $A \in \mathbb{R}^{d \times k}$, we denote by $A^\top$   the transpose of $A$, and 
by $\lambda_{\mathrm{max}}(A)$ and $\lambda_{\mathrm{min}}(A)$   the largest and smallest eigenvalue of $A$, respectively. 
For each $n\in \mathbb N$,  we denote by $\mathbb S^n_{\ge 0}$   the space of $n\times n$  symmetric positive semidefinite matrices. 

We  introduce the following function spaces: 
$\mathcal{C}([0,T], E)$ is the space of continuous functions $f:[0,T]\to E$; 
$L^1([0,T], E)$ (resp.~$L^\infty([0,T], E)$) is  the space of Borel measurable functions $f: [0,T] \to E$ such that 
$\lVert f \rVert_{L^1} \coloneqq \int_0^T{\lvert f(t) \rvert} \, dt  < \infty$ (resp.~$\lVert f \rVert_{L^\infty} \coloneqq \esssup_{t \in [0,T]}{\lvert f(t) \rvert} < \infty$); 
and 
$L^2([0,T], E)$ is the space of Borel measurable functions $f: [0,T] \to E$ with finite $L^2$-norm $\|\cdot\|_{L^2}$ induced by the canonical inner product $\langle \cdot, \cdot \rangle_{L^2}$.

For any square-integrable random variable $X$ defined on a probability space $(\Omega, \mathcal{F}, \mathbb{P})$, we denote by $\mathbb{E}[X]$ its expectation and by $\mathbb{V}[X]$ its variance.

\section{Distributed equilibria
for stochastic LQ  games}

\label{section:distributed_equi}

This section introduces a class of LQ stochastic differential games with distributed policies. In these games,   each player controls the drift of a linear stochastic differential equation through a policy that depends solely on its own state, and aims to minimize a quadratic cost functional that depends on both its individual control and the joint state of all players.

\subsection{Mathematical setup}

Let $T > 0$ be a given terminal time,   $N \in \mathbb{N}$ be the number of players,  and $I \coloneqq \{1,2,\dots,N \}$ be the index set of  players. Let $(\Omega,\mathcal{F},\mathbb{P})$ be a probability space which supports   an $N$-dimensional Brownian motion $B = (B^1, \dots, B^N)^\top$, 
and mutually independent random variables 
$(\xi^i)_{i\in I}$. 
The random variables $\xi^i$ and $B^i$ represent the initial condition and the idiosyncratic noise of the state process for player $i$.
We assume that  $\xi^i$, $i\in I$, has a positive variance  $\mathbb{V}[\xi^i]  > 0$.

 We consider a stochastic differential game with  distributed policies  defined as follows. 
For each $i\in I$, 
player $i$ chooses a policy from the set  $\tilde{\mathcal V}^i \subset \mathcal V^i$, 
where $\mathcal V^i$  is the set of  all measurable functions 
$u^i:[0,T]\times \mathbb R\to  \mathbb R$
such that the associated state dynamics 
\begin{equation}
    \label{eq:state_dynamics}
    d X_t^{i} = u^i(t, X_t^{i}) \, dt + \sigma_t^i \, d B_t^{i}, \quad t \in [0,T]; \quad X_0^i = \xi^i
\end{equation}
admits a unique strong solution 
$X^{u^i,i}$ satisfying 
$\mathbb E[\sup_{t\in [0,T]}|X^{u^i,i}_t|^2]
+\mathbb E[\int_0^T  |u^i(t,X^{u^i,i}_t)|^2 \, d t]<\infty$.
The precise definition of $\tilde{\mathcal V}^i$
  will be provided later in the appropriate context. 
  We say  
any policy in $\mathcal V^i$ is  distributed since it depends only on player $i$'s own state.
Let $\tilde{\mathcal{V}}\coloneqq \prod_{i \in I} \tilde{\mathcal{V}}^i$ be  
  the set of admissible policy profiles of all players.
Player $i$ aims to  minimize the following cost functional $J^i:\tilde{\cV}\to \sR$:
\begin{equation}
    \label{eq:cost_generalsetting}
    J^i(u) \coloneqq \mathbb{E}\left[\int_0^T \bigg(  \lvert u^i(t,X^i_t) \rvert^2 + X_t^\top Q^i X_t \bigg) \, dt + \gamma^i \lvert X_T^i - d^i \rvert^2 \right], 
\end{equation}
where 
$X\coloneqq (X^1,\ldots, X^N)^\top $
 depends on $u $ through the dynamics 
\eqref{eq:state_dynamics}.
To simplify the notation,   this explicit dependence is omitted whenever there is no risk of confusion.
We assume 
$\sigma^i\in \mathcal C([0,T],\mathbb R)$,
$Q^i\in \mathbb S^N_{\ge 0}$, $\gamma^i\ge 0$ and $d^i\in \mathbb R$.

We denote by $\sG  = (I, (J^i)_{i \in I}, \tilde{\cV})$ the game defined by \eqref{eq:state_dynamics}-\eqref{eq:cost_generalsetting}, emphasizing its dependence on the admissible policy profiles $\tilde{\mathcal{V}}$. 
For each $i\in I$,
we denote  by  
$\tilde {\mathcal V}^{-i}=\prod_{j\in I\setminus \{i\}}\tilde {\mathcal V}^j$ the set of policy profiles of
  all players except player $i$. 
When $\tilde {\mathcal V}^i={ \mathcal V}^i$, we write $ \mathcal V =\tilde {\mathcal V}$ and ${\mathcal V}^{-i}=\tilde{\mathcal V}^{-i} $.

\begin{remark}
 For ease of exposition, we have assumed that each player has one-dimensional state and control processes in \eqref{eq:state_dynamics}, and that the terminal cost of  \eqref{eq:cost_generalsetting}  depends only on player $i$'s   state. All results can be   extended to multi-dimensional state  dynamics of the form  
 $ 
 d X_t^{i} =(A^i_t X_t^{i}+D^i_t u^i(t, X_t^{i})) \, dt + \sigma_t^i \, d B_t^{i},
 $
and to terminal costs that depend on all players' joint states.
   
\end{remark}
 
Now we  introduce the definition of an $\varepsilon$-Nash equilibrium ($\varepsilon$-NE) for the game.

\begin{definition}
\label{def:NE}
   A policy profile  $u^* = (u^{*,i})_{i \in I} \in \tilde{\mathcal{V}}$ is  called a (distributed) $\varepsilon$-Nash equilibrium in the policy class $\tilde \cV$
    if for all $i \in I$ and ${u}^i \in \tilde{\cV}^i$,
    \begin{equation*}
        J^i(u^{*,i}, u^{*,-i}) \le J^i({u}^i, u^{*,-i}) + \varepsilon.
    \end{equation*}
  When $\varepsilon=0$, $u^*$ is called  
     a Nash equilibrium of the game. 
\end{definition}

The   equilibrium concept introduced in Definition~\ref{def:NE} is a distributed equilibrium, since each player only optimizes over distributed policies  \cite{cirant2025non}. Clearly,    $\varepsilon$-NEs
of the game depend 
on each player's  admissible policies.

\subsection{$\alpha$-potential function}

The main objective of this paper is to analyze the convergence of   gradient-based learning algorithms in which all players simultaneously update their policies based on the gradients of their individual objectives. The primary analytical tool for characterizing approximate NEs  and analyzing learning algorithms is the concept of an $\alpha$-potential function introduced in \cite{guo2025markov, guo2025alpha}.

 \begin{definition} \label{def:alphapotentialgame}
   Given the set  of policy profiles $\tilde{\cV}$,
    a   function $\Phi: \tilde{\cV} \to \mathbb{R}$  is said to be an $\alpha$-potential function of
the game $\sG = (I, (J^i)_{i \in I}, \tilde{\cV})$ if 
for all $i\in I$,
$u^i,\tilde u^i\in \tilde{\cV}^i$ and $u^{-i}\in \tilde{\cV}^{-i}$, 
    \begin{equation*}
        \lvert  [ J^i( \Tilde{u}^i, u^{-i} ) - J^i( u^i, u^{-i} )]
        -[\Phi(\Tilde{u}^i, u^{-i} ) - \Phi( u^i, u^{-i} ) ]\rvert \le \alpha.
    \end{equation*}
When such a $\Phi$ exists,    the game $\sG$ is called an $\alpha$-potential game.
In the case   
      $\alpha = 0$,
      $\Phi$ is called potential function and the game is called a potential game.  
\end{definition}

An $\alpha$-potential function ensures that if a player unilaterally deviates from its strategy, the resulting change in that player’s objective function coincides with the change in the $\alpha$-potential function up to an error   $\alpha$. As shown in 
\cite[Proposition 2.1]{guo2025towards}, a minimizer of an $\alpha$-potential function  $\Phi$ is an $\alpha$-NE of the game $\sG$.

We now construct an $\alpha$-potential function for the game $\sG$. This construction exploits the distributed structure of the game and extends the approach for dynamic potential functions in \cite{guo2025towards} to the more general case of $\alpha >0$.
 To this end, define $Q=(Q_{i,j})_{i,j\in I}\in \sR^{N\times N}$ by 
 \begin{equation}
    \label{eq:Q_matrix}
    Q_{i,j} \coloneqq 
    \begin{cases}
        Q_{i,i}^i + \frac{1}{2} \sum_{j \in I\setminus \{i\}} (Q_{i,j}^i - Q_{j,i}^j),
        & i=j,
        \\
        Q^i_{i,j}, &i\not =j,
    \end{cases}
\end{equation}
  where $Q_{i,j}^i$  
is the $(i,j)$-th entry of the matrix $Q^i$.
Let  $\Lambda \coloneqq \mathrm{diag}(\gamma^1,\dots, \gamma^N) \in \mathbb{R}^{N \times N}$, $d \coloneqq (d^1, \dots, d^N)^\top \in \mathbb{R}^N$,
and define the function
$\Phi:\cV\to \sR$ by 
\begin{equation}
    \label{eq:Phi_general_def}
    \Phi( u ) \coloneqq \mathbb{E}\left[ \int_0^T \left( \sum_{i=1}^N |u^i(t,X^i_t)|^2 + X_t^\top Q X_t \right) \, dt + (X_T - d)^\top \Lambda (X_T - d) \right].
\end{equation}
The following proposition shows that $\Phi$  is an $\alpha$-potential function and derives an explicit upper bound on $\alpha$ in terms of the interaction matrices $(Q^i)_{i \in I}$ and the policy class $\tilde{\mathcal{V}}$.

  \begin{proposition}
    \label{prop:Phi_is_alpha_potential}
    The function $\Phi$ defined in \eqref{eq:Phi_general_def} is an $\alpha$-potential function of the game $\sG = (I, (J^i)_{i \in I}, \tilde{\cV})$, where
    \begin{equation}
    \label{eq:alpha_bound_general}
        \alpha \coloneqq \left(\max_{i \in I} \sup_{u^i \in \tilde{\cV}^i}  \|{\mathbb{V}[X^{u^i,i}]}\|_{L^1} + 3 \max_{i \in I} \sup_{u^i \in \tilde{\cV}^i}  {\|\mathbb{E}[ X^{u^i,i}]\|^2_{L^2}} \right) C_Q,
    \end{equation}
    and
    \begin{equation}
        \label{eq:cw}
        C_Q \coloneqq \max_{i \in I} \sum_{j \in I\setminus \{i\}}  \lvert Q_{i,j}^i - Q_{j,i}^j \rvert.
    \end{equation}
    In particular, when $C_Q=0$,
    $\Phi$ is a potential function of the game $\sG = (I, (J^i)_{i \in I}, {\cV})$.
\end{proposition}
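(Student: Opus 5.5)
The plan rests on one structural fact: because policies are distributed, the state processes $X^1,\dots,X^N$ are mutually independent. Indeed, $X^{u^j,j}$ solves \eqref{eq:state_dynamics} with a strong solution driven only by $(\xi^j,B^j)$, so it is a measurable functional of $(\xi^j,B^j)$. Independence of the families $\{(\xi^j,B^j)\}_{j\in I}$ then gives independence of the $X^j$. Consequently, if player $i$ changes $u^i$ to $\tilde u^i$, only the law of $X^i$ changes and all $X^{j}$, $j\neq i$, are unaffected. Write $m_j(t)=\mathbb E[X^j_t]$ and $v_j(t)=\mathbb V[X^j_t]$. Independence gives $\mathbb E[X^k_tX^j_t]=m_k(t)m_j(t)$ for $k\ne j$ and $\mathbb E[(X^j_t)^2]=v_j(t)+m_j(t)^2$. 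Hence, for any symmetric $M$, $\mathbb E[X_t^\top M X_t]$ is an explicit function of $(m_j,v_j)_j$.

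Fix $i$, $u^{-i}$, and $u^i,\tilde u^i$, and denote by $\Delta$ the difference between the quantities under $\tilde u^i$ and under $u^i$. In $\Delta J^i$ and in $\Delta\Phi$, the control cost $\mathbb E\int|u^i|^2$ and the terminal cost $\gamma^i|X^i_T-d^i|^2$ appear identically. The latter holds because $\Lambda$ is diagonal, so only its $i$-th entry involves $X^i$. These terms therefore cancel, and only the running quadratic terms involving index $i$ remain. Using the symmetry of $Q^i$, one gets
\[
\Delta J^i=\int_0^T\Big(Q^i_{i,i}\,\Delta\mathbb E[(X^i_t)^2]+2\sum_{j\ne i}Q^i_{i,j}m_j\,\Delta m_i\Big)dt .
\]
For $\Phi$, the matrix $Q$ in \eqref{eq:Q_matrix} is not symmetric, so the cross terms come with $Q_{i,j}+Q_{j,i}=Q^i_{i,j}+Q^j_{j,i}$. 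This gives
\[
\Delta \Phi=\int_0^T\Big(Q_{i,i}\,\Delta\mathbb E[(X^i_t)^2]+\sum_{j\ne i}(Q^i_{i,j}+Q^j_{j,i})m_j\,\Delta m_i\Big)dt .
\]
Subtracting, and using the definition of $Q_{i,i}$, yields
\[
\Delta\Phi-\Delta J^i=\int_0^T\sum_{j\ne i}(Q^i_{i,j}-Q^j_{j,i})\Big(\tfrac12\big(\Delta v_i+\Delta(m_i^2)\big)-m_j\,\Delta m_i\Big)dt .
\]
The key design point is now visible: the diagonal correction in $Q$ is chosen exactly so that every remaining term carries a factor $Q^i_{i,j}-Q^j_{j,i}$. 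In particular, the difference vanishes identically when $C_Q=0$, which gives the final claim, valid for the full class $\mathcal V$.

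The remaining step is the estimate, taking suprema over $\tilde{\mathcal V}$. Let $\bar V=\max_i\sup\|v_i\|_{L^1}$ and $\bar M=\max_i\sup\|m_i\|_{L^2}^2$. I would bound the three terms as follows:
\begin{itemize}
\item Since $v_i,\tilde v_i\ge0$, we have $\int|\Delta v_i|\le\max(\|\tilde v_i\|_{L^1},\|v_i\|_{L^1})\le\bar V$, which contributes at most $\tfrac12 C_Q\bar V$.
\item Similarly, $\int|\Delta(m_i^2)|\le\bar M$, contributing at most $\tfrac12C_Q\bar M$.
\item By Cauchy--Schwarz, $\int|m_j||\tilde m_i-m_i|\le\|m_j\|_{L^2}(\|\tilde m_i\|_{L^2}+\|m_i\|_{L^2})\le2\bar M$, contributing at most $2C_Q\bar M$.
\end{itemize}
Summing over $j\neq i$ and using $\sum_{j\ne i}|Q^i_{i,j}-Q^j_{j,i}|\le C_Q$ gives the bound $C_Q(\tfrac12\bar V+\tfrac52\bar M)$, which is at most $\alpha$ in \eqref{eq:alpha_bound_general}.

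The main obstacle I expect is the rigorous justification of independence and of the moment identities for merely measurable feedback policies. For independence, I would rely on the strong-solution (Yamada--Watanabe type) representation of $X^j$ as a functional of $(\xi^j,B^j)$. For the moment identities, the integrability assumptions in the definition of $\mathcal V^i$ make all the expectations above finite and allow Fubini to exchange $\mathbb E$ and $\int_0^T$. The algebra itself is routine bookkeeping.
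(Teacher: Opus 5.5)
Your proposal follows the same route as the paper's proof. Independence of the $X^j$ reduces $J^i$ and $\Phi$ to functionals of the means and variances, and the control and terminal terms cancel. The diagonal correction in $Q$ then leaves only terms carrying a factor $Q^i_{i,j}-Q^j_{j,i}$. Your identity for $\Delta\Phi-\Delta J^i$ is exactly the one displayed in the paper.

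There is, however, a slip in your first two bullets. For nonnegative functions, the inequality $\int_0^T|\tilde v_i-v_i|\,dt\le\max(\|\tilde v_i\|_{L^1},\|v_i\|_{L^1})$ is false in general. Take two indicator functions with disjoint supports: the left side equals the sum of the norms, not their maximum. The same objection applies to $\int_0^T|\tilde m_i^2-m_i^2|\,dt\le\bar M$.

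Your constant is rescued by the fact that the coefficient $\tfrac12\sum_{j\ne i}(Q^i_{i,j}-Q^j_{j,i})$ does not depend on $t$. You therefore only need the signed integrals, and these bounds do hold:
\[
\Bigl|\int_0^T(\tilde v_i-v_i)\,dt\Bigr|=\bigl|\|\tilde v_i\|_{L^1}-\|v_i\|_{L^1}\bigr|\le\bar V,
\qquad
\bigl|\|\tilde m_i\|_{L^2}^2-\|m_i\|_{L^2}^2\bigr|\le\bar M.
\]
With the absolute value placed outside the integral, your bound $C_Q(\tfrac12\bar V+\tfrac52\bar M)$ is correct and slightly sharper than the paper's. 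If you keep the absolute value inside and use the triangle inequality instead, you get $2\bar V$ and $2\bar M$. That yields exactly the paper's constant $C_Q(\bar V+3\bar M)$.
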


\begin{remark}
    The upper bound
    \eqref{eq:alpha_bound_general}
    on $\alpha$  depends on the magnitude of asymmetric interactions between any two players in the dynamic game, as characterized by the interaction weights $(Q^i)_{i \in I}$, and the moments of the state processes induced by policies in $\tilde{\mathcal{V}}$.

The constant $C_Q$ in \eqref{eq:cw} depends only on the interaction structure of the game $\sG$.
It is clear that 
 $C_Q=0$ if $Q^i_{i,j}=Q^j_{j,i}$, i.e., the pairwise symmetric interaction.
When the interaction is asymmetric, 
it has been shown in the literature that, for suitably structured network games, $C_Q$ vanishes as the number of players tends to infinity. This occurs, for instance, when the interaction weights $Q^i_{i,j}$ and $Q^i_{j,i}$ decay  sufficiently fast with respect to the graph distance between two players  \cite{guo2025alpha, guo2025distributed}, or when the interaction weights  are generated by sufficiently dense or sufficiently sparse random networks \cite{rokade2025asymmetric}.

\end{remark}

Throughout this paper, we impose the   condition that the $\alpha$-potential function $\Phi$ is convex, and hence admits a minimizer in $\mathcal{V}$.

\begin{assumption}
    \label{assumption:Q_positivesemidef}
    $Q_{\mathrm{sym}} \coloneqq (Q + Q^\top)/2 \in \sS^N_{\ge 0}$, with  $Q$   defined in \eqref{eq:Q_matrix}.
\end{assumption}

Below we verify Assumption  \ref{assumption:Q_positivesemidef}
for specific interaction structures. 

\begin{example}
   \label{example:nr1}
   
Suppose the cost functional \eqref{eq:cost_generalsetting} is of the following form (see  \cite{bindel2015bad, guo2025alpha, rokade2025asymmetric}):  
    \begin{equation}
        \label{eq:cost_crowd_motion_lq}
        J^i(u) = \mathbb{E}\left[\int_0^T \bigg( \lvert u^i(t,X_t^i) \rvert^2 + \sum_{j \in I\setminus \{i\}} \omega_{i,j} \lvert X_t^i -  X_t^j \rvert^2 \bigg) \, dt + \gamma^i \lvert X_T^i - d^i \rvert^2 \right], 
    \end{equation}
    where $ w_{i,j}\ge 0$ for all $i,j \in I$.  In this case, the matrix $Q \in \mathbb{R}^{N \times N}$ in  \eqref{eq:Q_matrix}  and 
    the constant  $C_Q$ in  \eqref{eq:cw} are 
    given by 
$$
Q_{i,j}=\begin{cases}
  \frac{1}{2}  \sum_{l \in I\setminus \{i\}}( {\omega_{i,l} + \omega_{l,i}}), & i=j,
   \\
     -\omega_{i,j}, & i\not = j,
\end{cases}
\qquad 
C_Q = \max_{i \in I} \sum_{j = 1}^N \lvert \omega_{i,j} - \omega_{j,i} \rvert.
$$
Note that Assumption \ref{assumption:Q_positivesemidef} always holds,  since $Q_{\mathrm{sym}}$ is symmetric, 
    diagonally dominant, and has  non-negative diagonal entries.

\end{example}

\begin{example}
\label{ex:mean_flocking}

Suppose the cost functional \eqref{eq:cost_generalsetting} is of the following form  (see \cite{carmona2013mean, lacker2022case}):
    \begin{equation*}
        J^i(u) = \mathbb{E}\left[\int_0^T \bigg( \lvert u^i(t,X_t^i) \rvert^2 + \bigg\lvert X_t^i - \sum_{j \in I\setminus \{i\}} \omega_{i,j} X_t^j \bigg\rvert^2 \bigg) \, dt + \gamma^i \lvert X_T^i - d^i \rvert^2 \right],
    \end{equation*}
    where $ w_{i,j}\in \mathbb R$ for all $i,j \in I$.
    In this case, the matrix $Q \in \mathbb{R}^{N \times N}$ in  \eqref{eq:Q_matrix}  and 
    the constant  $C_Q$ in  \eqref{eq:cw} are 
    given by  
    $$
Q_{i,j}=\begin{cases}
   1 + \frac{1}{2} \sum_{l \in I\setminus \{i\}} (\omega_{l, i} - \omega_{i,l}), & i=j,
   \\
     -\omega_{i,j}, & i\not = j,
\end{cases}
 \qquad
     C_Q = \max_{i \in I} \sum_{j = 1}^N \lvert \omega_{i,j} - \omega_{j,i} \rvert.$$
Assumption \ref{assumption:Q_positivesemidef} may not    hold   even if all weights $(w_{i,j})_{i,j\in I}$ are non-negative.  
A sufficient condition for  Assumption \ref{assumption:Q_positivesemidef} 
 is that 
 $ \sum_{j\not =i} \omega_{i,j}<1$ for all $i\in I$ and all weights $(w_{i,j})_{i,j\in I}$ are non-negative. 
\end{example}

According to Proposition~\ref{prop:Phi_is_alpha_potential}, the magnitude of $\alpha$ for the game $\sG$ depends on the interplay between the asymmetry of the interaction weights and the complexity of the policy class. In Section~\ref{section:symmetric}, we analyze games with symmetric interactions and general policies, and in Section~\ref{section:asymmetric}, games with asymmetric interactions and policies whose states have bounded second moments.

\section{Learning NEs under symmetric interactions  
}
\label{section:symmetric}

This section considers the LQ game $\sG= (I, (J^i)_{i \in I}, {\cV})$ with symmetric interactions. In this case, the game is a potential game with a  potential function $\Phi$ defined in \eqref{eq:Phi_general_def}. We leverage this potential structure to construct an NE of  the game and to establish the convergence of independent learning algorithms.

\subsection{Characterization of distributed NEs}

More precisely, we impose the following condition throughout this section.  

\begin{assumption}
    \label{assumption:Qij_symmetry}
      $Q_{i,j}^i = Q_{j,i}^j$ for all $i,j \in I$. In particular, $Q_{\mathrm{sym}}=Q$  with $Q$ defined in \eqref{eq:Q_matrix}.
\end{assumption}

Under Assumption \ref{assumption:Qij_symmetry},
Theorem \ref{theorem:optimalpolicy}  constructs   an NE of the game 
$\sG$   via  the minimizer of the potential function $\Phi$.
Minimizing $\Phi$ over $\cV$
 corresponds to a LQ distributed control problem, whose optimal policy can be obtained through the following  ODE system: 
for all $i \in I$ and $t \in [0,T]$,
\begin{subequations}
\label{eq:ODEs_valuefunction}
    \begin{align}[left = \empheqlbrace\,]
        &\frac{\partial P_t^i}{\partial t} - (P_t^i)^2 + Q_{i,i} = 0, \quad P_T^i = \gamma^i,
        \label{eq:P_symmetric}
        \\
            &\frac{\partial \Psi_t}{\partial t} - \Psi_t^\top \Psi_t + Q = 0, \quad \Psi_T = \Lambda,
            \label{eq:Psi_symmetric}\\
            &\frac{\partial \zeta_t^i}{\partial t} - \sum_{j = 1}^N \zeta_t^j \Psi_t^{j,i} = 0, \quad \zeta_T^i = - d^i \gamma^i.
            \label{eq:zeta_symmetric}
    \end{align}
\end{subequations}

\begin{theorem}
    \label{theorem:optimalpolicy}
    Suppose Assumption \ref{assumption:Q_positivesemidef} holds.
    For all $i \in I$, the system \eqref{eq:ODEs_valuefunction}  admits  a  unique solution $P^i, \zeta^i \in \mathcal{C}([0,T], \mathbb{R})$ and $\Psi \in \mathcal{C}([0,T],\mathbb{R}^{N \times N})$. Define $u^{*} =(u^{*,i})_{i\in I}\in \mathcal{V}$ by
  \begin{equation}
        \label{eq:optimal_policy}
        u^{*,i}(t,x) \coloneqq K_t^{\Phi, *,i} (x - \mu_t^{*,i}) + G_t^{\Phi, *,i},  
        \quad \forall 
        (t,x)\in [0,T]\times \sR, \quad \forall i\in I,
          \end{equation}
    where 
    $     K_t^{\Phi, *,i} \coloneqq - P_t^{i}$, $G_t^{\Phi, *,i} \coloneqq - [\Psi_t \mu_t^*]_{i} - \zeta_t^i$,
    and $\mu^{*,i}_t \coloneqq \sE[X^{u^{*,i},i}_t]$ satisfying 
    \begin{equation*}
        \frac{\partial \mu_t^{*,i}}{\partial t} = - [\Psi_t \mu_t^*]_i - \zeta_t^i, \quad t \in [0,T]; \quad \mu_0^{*,i} = \mathbb{E}[\xi^i].
    \end{equation*} 
Then $u^*$ is a  minimizer of $\Phi:\cV\to \sR$.

Furthermore, if 
Assumption \ref{assumption:Qij_symmetry} holds, 
    then   $u^*  $ is an NE of the game $\sG = (I, (J^i)_{i \in I},  {\cV})$. 
    
\end{theorem}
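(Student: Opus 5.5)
We will prove the three assertions in order: well-posedness of \eqref{eq:ODEs_valuefunction}, optimality of $u^*$ for $\Phi$, and the Nash property under Assumption \ref{assumption:Qij_symmetry}.

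\textbf{Well-posedness.} The ODE for $P^i$ is a scalar Riccati equation with $P^i_T=\gamma^i\ge 0$. The ODE for $\Psi$ is a matrix Riccati equation with symmetric terminal value $\Lambda\succeq 0$. Since $\Psi$ enters only through the $\mu$-dynamics, and $\mu^\top Q\mu=\mu^\top Q_{\mathrm{sym}}\mu$, we may replace $Q$ by $Q_{\mathrm{sym}}$. With this choice $\Psi$ is symmetric and $\Psi^\top\Psi=\Psi^2$. By Assumption \ref{assumption:Q_positivesemidef}, $Q_{\mathrm{sym}}\succeq0$. Standard LQ theory (comparison or monotonicity of Riccati solutions) then gives a global solution with $0\preceq\Psi_t$ bounded on $[0,T]$. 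The diagonal entries $Q_{i,i}=[Q_{\mathrm{sym}}]_{i,i}\ge0$, so the same argument applies to each $P^i$. The equation for $\zeta$ is linear with continuous coefficients, and the linear $\mu^*$-ODE is likewise well posed.

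\textbf{Optimality for $\Phi$.} We decompose the cost using means and variances. For distributed policies the states $X^i$ are mutually independent, because the $\xi^i$ and $B^i$ are independent. Hence $\sE[X_t^\top QX_t]=\mu_t^\top Q\mu_t+\sum_i Q_{i,i}\sV[X^i_t]$, and similarly for the terminal term. For any $u\in\cV$, write $u^i(t,X^i_t)=\bar u^i_t+\tilde u^i_t$ with $\bar u^i_t=\sE[u^i(t,X^i_t)]$. Then $\Phi(u)$ splits into
\begin{itemize}
\item a deterministic LQ problem in $(\mu,\bar u)$: $\dot\mu=\bar u$, with running cost $|\bar u|^2+\mu^\top Q_{\mathrm{sym}}\mu$ and terminal cost $(\mu_T-d)^\top\Lambda(\mu_T-d)$;
\item $N$ decoupled scalar problems for the fluctuations $Y^i=X^i-\mu^i$, with $dY^i=\tilde u^i\,dt+\sigma^i\,dB^i$ and cost $\sE\int(|\tilde u^i|^2+Q_{i,i}|Y^i|^2)\,dt+\gamma^i\sE|Y^i_T|^2$.
\end{itemize}

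Each fluctuation problem is bounded below by its value over all adapted controls. Completing the square with $P^i$ (Itô on $P^i_t|Y^i_t|^2$) shows that the lower bound is attained by $\tilde u^i=-P^i_tY^i_t$. This feedback depends only on $X^i$, so it is distributed.

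The mean problem is a deterministic LQ problem. Completing the square with $\mu^\top\Psi\mu+2\zeta^\top\mu$, using \eqref{eq:Psi_symmetric} and \eqref{eq:zeta_symmetric}, gives the optimal control $\bar u=-\Psi\mu-\zeta$.

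The main subtlety is that the means and fluctuations of a general $u\in\cV$ are not independent choices. The bound $\Phi(u)\ge V_{\text{mean}}+\sum_iV^i_{\text{fluct}}$ still holds for every $u$, because the two parts are evaluated separately. Equality is achieved by $u^*$, whose mean part is $G^{\Phi,*}$ and whose fluctuation part is $K^{\Phi,*}(x-\mu^*)$. Hence $u^*$ minimizes $\Phi$ over $\cV$.

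\textbf{Nash property.} Under Assumption \ref{assumption:Qij_symmetry} we have $C_Q=0$, so by Proposition \ref{prop:Phi_is_alpha_potential} $\Phi$ is an exact potential on $\cV$. Then for every $i$ and every $u^i\in\cV^i$,
\[
J^i(u^i,u^{*,-i})-J^i(u^*)=\Phi(u^i,u^{*,-i})-\Phi(u^*)\ge0.
\]
Therefore $u^*$ is an NE.

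The main obstacle is the mean-field (Riccati) step. Specifically, we must check the symmetry and positivity of $\Psi$ needed for global existence, and verify the completion-of-squares identity in the presence of the nonsymmetric $Q$ by working with $Q_{\mathrm{sym}}$.
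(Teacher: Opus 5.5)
Your argument is correct, but it takes a genuinely different route from the paper.

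\textbf{The paper's route.} The paper minimizes $\Phi$ by lifting it to a control problem over product measures $\rho=\rho^1\otimes\cdots\otimes\rho^N$ and invoking the Jackson--Lacker verification theorem. It posits a value function $\mathcal U(t,\rho)$ that is quadratic in the means and variances of the marginals, built from $P^j$, $\Psi$, $\zeta^j$ and an auxiliary $\varphi^j$. It then computes the Lions derivatives of $\mathcal U$ and checks the infinite-dimensional HJB equation by matching the variance, linear-in-mean and quadratic-in-mean terms against \eqref{eq:ODEs_valuefunction}.

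\textbf{Your route.} You split $\Phi(u)$, for an arbitrary $u\in\mathcal V$, into a deterministic LQ problem for $(\mu,\bar u)$ plus $N$ scalar fluctuation problems. You relax the latter to all adapted controls and solve each piece by an elementary completion of squares. The key step is that the sum of the separate infima is a lower bound for every $u$, and $u^*$ attains it because $-P^i_tY^i_t$ is a distributed, mean-zero feedback. This replaces the entire verification machinery.

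\textbf{What each buys.} Your route is more elementary and self-contained: no Lions derivatives, no $\mathcal C^{1,2}$ regularity on $\mathcal P_2(\mathbb R)^N$, no external verification theorem. It also makes explicit the mean/variance splitting that the paper only exploits later in Proposition \ref{prop:potential_decomposition}. The paper's route additionally yields the value function for every initial product law. It is also the natural template beyond the LQ setting, where no such clean splitting is available. The Nash step is the same in both.

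\textbf{Two small points.} First, replacing $Q$ by $Q_{\mathrm{sym}}$ in \eqref{eq:Psi_symmetric} is not a mere relabeling when $Q\neq Q^\top$: in that case $\Psi_t-\Psi_t^\top=(Q-Q^\top)(T-t)$. The paper's proof makes the same implicit move when it asserts $\Psi_t\in\mathbb S^N_{\ge 0}$ and uses the symmetry of $\Psi$, and under Assumption \ref{assumption:Qij_symmetry} the two versions coincide. Second, you should record the one-line check that the mean of $X^{u^*,i}$ really is the solution $\mu^{*,i}$ of the linear ODE. The difference $e$ solves $\dot e=-P^i e$ with $e_0=0$, so it vanishes. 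This is what makes $\bar u^*=-\Psi\mu^*-\zeta$ the optimal mean control along its own trajectory.
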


The proof proceeds by minimizing $\Phi$ using the dynamic programming approach as in \cite{jacksonlacker2025approximately}, which involves lifting the problem to a McKean-Vlasov   control problem over the space of product measures and solving the resulting infinite-dimensional HJB equations using the solutions of \eqref{eq:ODEs_valuefunction}.

\subsection{Independent policy gradient algorithm and its convergence}

Motivated by the NE policy in Theorem~\ref{theorem:optimalpolicy}, this section proposes and analyzes a policy gradient algorithm for the game $\sG$, in which, at each iteration, players simultaneously update their policies by performing gradient descent on their individual objective functions.

\subsubsection{Policy gradient algorithm}
In light of Theorem~\ref{theorem:optimalpolicy}, we consider affine policies that depend on each player’s state mean. 
For each $i\in I$, consider the following parameter space for player $i$:
$$
\mathcal{K}^i\times \cG^i \coloneqq L^2([0,T], \mathbb{R})\times L^2([0,T], \mathbb{R}), 
$$
and  
  the following policy  space   $\cV_{\mathrm{aff}}^i$  parameterized by 
  $\mathcal{K}^i\times \cG^i$:
\begin{equation}
    \label{eq:policy_parameterization}
     \mathcal V_{\mathrm{aff}}^i  \coloneqq 
     \left\{ u_{\theta^i} \in \cV^i 
     \,\middle\vert\, 
    \begin{aligned}
    &u_{\theta^i}(t,x)= K_t^i (x - \sE[X^{u_{\theta^i},i}_t]) + G^i_t,
    \quad \forall (t,x)\in [0,T]\times \sR,
    \\ 
     &
     \textnormal{$X^{u_{\theta^i},i}$ satisfies 
     \eqref{eq:state_dynamics} with $u^i=u_{\theta^i}$, and $ \theta^i = \big(K^i,G^i\big) \in \mathcal{K}^i\times \cG^i$}  
     \end{aligned}
     \right\}.
\end{equation}
  We  identify $u_{\theta^i}\in \cV_{\mathrm{aff}}^i$  
 with its parameter $ \theta^i = \big(K^i,G^i\big) \in \mathcal{K}^i\times \cG^i$,
 and the joint policy space  
 $\cV_{\mathrm{aff}}$ with 
 the joint parameter space
 $\mathcal{K} \times \mathcal{G} \coloneqq \prod_{i = 1}^N \mathcal{K}^i \times \prod_{i = 1}^N \mathcal{G}^i \cong L^2([0,T], \mathbb{R}^N) \times L^2([0,T], \mathbb{R}^N)$. 
For any $\theta^i \in \mathcal{K}^i \times \cG^i$, we denote by $X^{\theta,i}$ player $i$'s state process   satisfying \eqref{eq:state_dynamics} under the   policy $u_{\theta^i}$. 

 \begin{remark}
 \label{rmk:policy_parameterization}
  Although a policy in $\mathcal{V}_{\mathrm{aff}}^i$ includes explicit feedback with respect to each player’s state mean, this measure dependence can be viewed as an additional time-dependent component
  that is determined by the parameter $\theta^i$. As will be shown in Proposition~\ref{prop:potential_decomposition}, such a policy parameterization separates the contributions of $K^i$ and $G^i$ in the cost functional. This  property is  essential for the convergence analysis of the   policy gradient algorithms.
   
 \end{remark}
  
Given the    policy class $\mathcal{V}_{\mathrm{aff}}$, each player independently and simultaneously performs gradient descent on its cost functional to update its policy.
Specifically,  let $(K^{(0)}, G^{(0)}) = (K^{(0),i}, G^{(0),i})_{i\in I}\in \cK\times \cG$
be the initial parameter profile, 
and for each $i\in I$, 
let $\eta^i_K>0$ and $\eta^i_G>0$
be the learning rates 
used by player $i$ to update the parameters $K^i$ and $G^i$, respectively. 
For each iteration $\ell\in \sN_0\coloneqq \sN\cup \{0\}$,
given the parameter profile 
$\theta^{(\ell)}\coloneqq (K^{(\ell)}, G^{(\ell)})$,
consider the following gradient descent update: for all $i\in I$,  
\begin{equation}
    \label{eq:potential_GD_update}
    \begin{split}
        &K_t^{(\ell+1),i} \coloneqq K_t^{(\ell),i} - \eta_K^i (\nabla_{K^i} J^i(\theta^{(\ell)} ) )_t (\vartheta_t^{(\ell),i})^{-1}, \quad t\in [0,T], \\
        &G_t^{(\ell+1),i} \coloneqq G_t^{(\ell),i} - \eta_G^i (\nabla_{G^i} J^i(\theta^{(\ell)} ))_t, \quad t\in [0,T],
    \end{split}
\end{equation}
where 
$J^i(\theta^{(\ell)})$
is the cost functional \eqref{eq:cost_generalsetting}
evaluated  at the policy profile 
$  u_{\theta^{(\ell)}}$,  
$\vartheta_t^{(\ell),i}=\sV[X^{\theta^{(\ell)},i}_t]$ is the variance of player $i$'s state process  
controlled by  the policy $u_{\theta^{(\ell),i}}$,
and 
$\nabla_{K^i} J^i$ and $\nabla_{G^i} J^i$ are the (Fr\'echet) derivative of $J^i$ with respect to $K^i$ and $G^i$, respectively.

The policy update \eqref{eq:potential_GD_update} extends the single-agent policy gradient algorithm in \cite{giegrich2024convergence}
to the   multi-agent    setting. 
The gradients $\nabla_{K^i} J^i$ and $\nabla_{G^i} J^i$ can be expressed analytically in terms of the model coefficients, as stated in the following lemma. When the model coefficients are unknown, these gradients can be estimated using zeroth-order optimization methods based on trajectories of player $i$’s state and cost \cite{berahas2022theoretical}.

\begin{lemma}
\label{lemma:cost_gradients}
   For all $i\in I$,  $\theta=(K,G)\in \cK\times \cG$ and $t\in [0,T]$,
   $(\nabla_{K^i} J^i(\theta))_t = 2 (P^{K,i}_t + K^{i}_t) \vartheta^{K,i}_t$,
   and 
   $(\nabla_{G^i} J^i(\theta))_t = 2 (G^{i}_t + \Xi^{G,i}_t)$, 
where $P^{K,i}\in \cC([0,T], \sR)$ satisfies
\begin{equation}
    \label{eq:PK_ODE}
    \frac{\partial}{\partial t} P_t^i + 2 K_t^{i} P_t^i + (K_t^{i})^2 + Q_{i,i}^i = 0, \quad t \in [0,T]; \quad P_T^i = \gamma^i,
\end{equation}
$
    \Xi_t^{G,i} \coloneqq \int_t^T \left[ Q^i \mu_s^{G} \right]_i \, ds + \gamma^i (\mu_T^{G,i} - d^i),
    $ 
   $\mu^{G,i}_t \coloneqq \sE[X^{\theta, i}_t]$, 
and   $\vartheta^{K,i}_t \coloneqq \sV[X^{\theta, i}_t]$.

\end{lemma}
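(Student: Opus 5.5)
The plan is to decompose $J^i$ into a part that depends only on $K^i$ through variances and a part that depends only on $G^i$ through means. Then each part is differentiated by a standard adjoint, or variation-of-constants, argument.

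\emph{Step 1: decomposition.} Fix $\theta=(K,G)\in\cK\times\cG$. Taking expectations in \eqref{eq:state_dynamics} under $u_{\theta^i}$ gives $\frac{d}{dt}\mu^{G,i}_t=G^i_t$, so $\mu^{G,i}_t=\sE[\xi^i]+\int_0^t G^i_s\,ds$ depends only on $G^i$. The centred process $Y^i\coloneqq X^{\theta,i}-\mu^{G,i}$ solves $dY^i_t=K^i_tY^i_t\,dt+\sigma^i_t\,dB^i_t$. Hence $\vartheta^{K,i}$ solves the linear ODE
\begin{equation*}
\tfrac{d}{dt}\vartheta^{K,i}_t=2K^i_t\vartheta^{K,i}_t+(\sigma^i_t)^2,\qquad \vartheta^{K,i}_0=\sV[\xi^i].
\end{equation*}
This ODE has a unique absolutely continuous solution since $K^i\in L^2\subset L^1$, and the solution depends only on $K^i$.

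The processes $X^{\theta,j}$, $j\in I$, are driven by independent $(\xi^j,B^j)$, so they are independent and $\mathrm{Cov}(X^j_t,X^k_t)=0$ for $j\neq k$. Using $\sE[|u_{\theta^i}(t,X^i_t)|^2]=(K^i_t)^2\vartheta^{K,i}_t+(G^i_t)^2$, we obtain
\begin{equation*}
J^i(\theta)=\underbrace{\int_0^T\Big((K^i_t)^2\vartheta^{K,i}_t+\sum_{j}Q^i_{j,j}\vartheta^{K,j}_t\Big)dt+\gamma^i\vartheta^{K,i}_T}_{=:J^i_K}+\underbrace{\int_0^T\big((G^i_t)^2+(\mu^G_t)^\top Q^i\mu^G_t\big)dt+\gamma^i(\mu^{G,i}_T-d^i)^2}_{=:J^i_G}.
\end{equation*}
Consequently $\nabla_{K^i}J^i=\nabla_{K^i}J^i_K$ and $\nabla_{G^i}J^i=\nabla_{G^i}J^i_G$. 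In $J^i_K$, only the terms $(K^i)^2\vartheta^{K,i}$, $Q^i_{i,i}\vartheta^{K,i}$ and $\gamma^i\vartheta^{K,i}_T$ depend on $K^i$.

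\emph{Step 2: $G$-gradient.} Perturb $G^i$ by $h\in L^2$. The mean $\mu^{G,i}$ then changes exactly by $H_t\coloneqq\int_0^th_s\,ds$, and $J^i_G$ is a quadratic polynomial in $h$. Using the symmetry of $Q^i$, its linear part is
\begin{equation*}
2\int_0^TG^i_th_t\,dt+2\int_0^T[Q^i\mu^G_t]_iH_t\,dt+2\gamma^i(\mu^{G,i}_T-d^i)H_T .
\end{equation*}
Fubini gives $\int_0^T[Q^i\mu^G_t]_iH_t\,dt=\int_0^Th_s\int_s^T[Q^i\mu^G_t]_i\,dt\,ds$, and $H_T=\int_0^T h_s\,ds$. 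So the linear part equals $\langle 2(G^i+\Xi^{G,i}),h\rangle_{L^2}$. The quadratic remainder is $O(\|h\|_{L^2}^2)$ since $\|H\|_{L^\infty}\le\sqrt T\|h\|_{L^2}$. This gives Fréchet differentiability and the stated $G$-gradient.

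\emph{Step 3: $K$-gradient via the adjoint $P^{K,i}$.} Equation \eqref{eq:PK_ODE} is linear in $P$ with coefficients $2K^i\in L^1$ and $(K^i)^2+Q^i_{i,i}\in L^1$. It therefore has a unique absolutely continuous solution, given by an explicit exponential formula.

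Perturb $K^i$ to $K^i+h$ and write $\vartheta^h$ for the perturbed variance. Set $\delta\coloneqq\vartheta^h-\vartheta^{K,i}$, which satisfies
\begin{equation*}
\dot\delta=2K^i\delta+2h\,\vartheta^h,\qquad\delta_0=0 .
\end{equation*}
By Grönwall with the $L^1$ coefficient, $\|\delta\|_{L^\infty}\le C\|h\|_{L^2}$, with $C$ locally uniform in $K^i$.

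Compute $\frac{d}{dt}(P^{K,i}\delta)=-((K^i)^2+Q^i_{i,i})\delta+2P^{K,i}h\vartheta^h$ and integrate over $[0,T]$ to get
\begin{equation*}
\gamma^i\delta_T=\int_0^T\big(-((K^i)^2+Q^i_{i,i})\delta+2P^{K,i}h\vartheta^h\big)dt .
\end{equation*}
Substituting into $J^i_K(K^i+h)-J^i_K(K^i)=\int_0^T\big(2K^ih\vartheta^h+h^2\vartheta^h+((K^i)^2+Q^i_{i,i})\delta\big)dt+\gamma^i\delta_T$ yields
\begin{equation*}
\int_0^T 2(K^i+P^{K,i})h\,\vartheta^h\,dt+\int_0^Th^2\vartheta^h\,dt .
\end{equation*}
Replace $\vartheta^h$ by $\vartheta^{K,i}+\delta$. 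The error terms are bounded by $\|P^{K,i}+K^i\|_{L^2}\|h\|_{L^2}\|\delta\|_{L^\infty}+\|h\|_{L^2}^2\|\vartheta^h\|_{L^\infty}=O(\|h\|^2_{L^2})$. This gives $(\nabla_{K^i}J^i)_t=2(P^{K,i}_t+K^i_t)\vartheta^{K,i}_t$.

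The main obstacle is this last step: making the Fréchet remainder rigorous when $K^i$ is only in $L^2$. One needs uniform $L^\infty$ bounds on $\vartheta^h$, $\delta$ and $P^{K,i}$ via $\exp(2\int|K^i+h|)$, with constants that stay bounded for $h$ in a bounded $L^2$ ball. The algebraic identities themselves are routine.
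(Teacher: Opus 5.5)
Your proof is correct and follows the route the paper relies on. The paper uses the decomposition $J^i=J^{1,i}(K)+J^{2,i}(G)$ from Proposition~\ref{prop:potential_decomposition}, a cost-difference identity for the $K$-part built from the adjoint $P^{K,i}$ (the analogue of Lemma~\ref{lemma:cost_difference}, after \cite{giegrich2024convergence,plank2025policy}), and the adjoint/Pontryagin derivative for the $G$-part (cited from \cite[Corollary 4.11]{carmona_lectures_2016}, which your Fubini computation reproduces directly); the paper leaves these steps to references, so your explicit Fréchet-remainder estimates fill in detail rather than depart from its argument.
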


Algorithm~\ref{algo} summarizes the   policy gradient algorithm for the case with symmetric interactions.  

\begin{algorithm}[H]
\caption{Independent Policy Gradient Learning for Symmetric Interactions}
\label{algo}
\begin{algorithmic}[1]
\STATE \textbf{Input:} 
Initial parameters $(K^{(0),i}, G^{(0),i})_{i\in I}$,
and learning rates $ \eta_K^i, \eta_G^i\in (0,\infty)$ for all $i\in I$. 
\FOR{$\ell = 1, 2, \dots$}
    \STATE 
    Obtain the updated parameters  $ (K^{(\ell),i},G^{(\ell),i} )_{i\in I}$ by \eqref{eq:potential_GD_update}. 
\ENDFOR
\end{algorithmic}
\end{algorithm}
 
\subsubsection{Convergence analysis}

This section establishes the linear convergence of Algorithm~\ref{algo} to the NE given in Theorem~\ref{theorem:optimalpolicy}. The key observation is that, under the policy parameterization \eqref{eq:policy_parameterization}, each player’s cost functional and the potential function can be decomposed into two terms that depend only on $K$ and $G$, respectively. Such a decomposition enables separate convergence analyses for $K$ and $G$.

To see it, observe that by  restricting   to the policy space  $ \mathcal{V}_{\mathrm{aff}}$, 
  the cost functional \eqref{eq:cost_generalsetting} takes the following form: 
for each $\theta =(K^i,G^i)_{i\in I}\in \cK\times \cG$,  
\begin{equation}
    \label{eq:cost_affine_param}
    J^i(\theta) = \mathbb{E}\left[\int_0^T \left(  \lvert K_t^i (X_t^{\theta,i} - \mathbb{E}[X_t^{\theta, i}]) + G_t^i \rvert^2 + (X_t^\theta)^\top Q^i X_t^\theta \right) \, dt + \gamma^i \lvert X_T^{\theta,i} - d^i \rvert^2 \right],
\end{equation}
with $X^\theta =  (X^{\theta,i})_{i\in I}$. 
 Similarly, the potential function \eqref{eq:Phi_general_def} takes the form 
\begin{equation}
    \label{eq:potential_function_affine}
    \begin{split}
        \Phi(\theta) &= \mathbb{E} \Bigg[ \int_0^T \left( \sum_{i=1}^N |{K}^i_t (X_t^{\theta,i} - \mathbb{E}[X_t^{\theta, i}]) + G^i_t|^2 + (X_t^\theta)^\top Q X_t^\theta \right) \, dt   + \sum_{i=1}^N\gamma^i |X_T^{\theta, i} - d^i|^2 \Bigg].
    \end{split}
\end{equation}
Write $\mu^{G}=(\mu^{G,i} )_{i\in I}  $
and $\vartheta^{K}=( \vartheta^{K,i})_{i\in I}  $, where   for each $i\in I$, 
    $\mu_t^{G,i} = \mathbb{E}[X_t^{\theta, i}]$ and $\vartheta_t^{K,i} = \mathbb{V}[X_t^{\theta, i}]$ 
  satisfy:
\begin{equation}
    \label{eq:moments}
    \begin{split}
        &\frac{\partial \mu_t^{G,i}}{\partial t} = G_t^i, \quad t \in [0,T]; \quad \mu_0^{G,i} = \mathbb{E}[\xi^i], \\
        &\frac{\partial \vartheta_t^{K,i}}{\partial t} = 2 K_t^i \vartheta_t^{K,i} + (\sigma_t^i)^2, \quad t \in [0,T]; \quad \vartheta_0^{K,i} = \mathbb{V}[\xi^i].
    \end{split}
\end{equation}
The state mean $\mu^{G}   $ depends only on the intercept parameters  $G$, whereas the state variance $\vartheta^{K}$ depends only on the slope parameters  $K$.   
 Define the decomposed cost functionals  
$J^{1,i}: \mathcal{K} \to \mathbb{R}$ and $J^{2,i}: \mathcal{G} \to \mathbb{R}$ by 
\begin{equation}
    \label{eq:cost_functionals_decomp}
    \begin{split}
        &J^{1,i}(K) \coloneqq \int_0^T \left( (K_t^i)^2 \vartheta_t^{i} + \sum_{j = 1}^N Q_{j,j}^i \vartheta_t^{j} \right) \, dt + \gamma^i \vartheta_T^{i}, \\
        &J^{2,i}(G) \coloneqq \int_0^T \left( (G_t^i)^2 + \mu_t^\top Q^i \mu_t \right) \, dt + \gamma^i (\mu_T^{i} - d^i)^2,
    \end{split}
\end{equation}
where  we omit the dependence on $K$ and $G$ in the superscripts of $\mu^{G}$ and $\vartheta^{K}$ for notational  simplicity. 
Similarly, define the decomposed potential functions 
$\Phi^1: \mathcal{K} \to \mathbb{R}$ and $\Phi^2: \mathcal{G} \to \mathbb{R}$ by 
\begin{equation}
    \label{eq:potential_functions_decomp}    
    \begin{split}
        &\Phi^1(K) \coloneqq \sum_{i=1}^N \left[ \int_0^T   \left( (K_t^i)^2 + Q_{i,i} \right) \vartheta_t^{i} \, dt + \gamma^i \vartheta_T^{i} \right],\\
        &\Phi^2(G) \coloneqq \int_0^T \left( G_t^\top G_t + \mu_t^\top Q \mu_t \right)  \, dt + (\mu_T - d)^\top \Lambda (\mu_T - d).
    \end{split}
\end{equation}

The following proposition shows that $J^{1,i}$ and $J^{2,i}$ decompose the original cost $J^i$ and admit a potential structure associated with the corresponding decomposed potential functions $\Phi^1$ and $\Phi^2$. 
\begin{proposition}
    \label{prop:potential_decomposition}
    For all   $i \in I$ and   $(K,G)  \in \mathcal{K} \times \mathcal{G}$,
    \begin{equation*}
        J^i(K,G) = J^{1,i}(K) + J^{2,i}(G),\quad 
         \Phi(K, G) = \Phi^1(K) + \Phi^2(G).
    \end{equation*}
      Assume further that Assumption  \ref{assumption:Qij_symmetry}  holds. Then $\Phi^1$ is a potential function for the game $(I,(J^{1,i})_{i\in I}, \cK)$ such  that for all $i\in I$, $K^i,\tilde K^i\in \cK^i$ and $K^{-i} \in \cK^{-i}$,
    $$
 J^{1,i}( \tilde K^i, K^{-i} ) - J^{1,i}( K^i, K^{-i} )= 
      \Phi^1( \tilde K^i, K^{-i} ) - \Phi^{1}( K^i, K^{-i} ). 
    $$
    Moreover,  $\Phi^2$ is a potential function for the game $(I,(J^{2,i})_{i\in I}, \cG)$. 
\end{proposition}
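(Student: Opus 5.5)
We first establish the decompositions through a mean--variance split, and then check the potential identities by explicit computation. Fix $\theta=(K,G)$. Under the affine policy, $X^{\theta,i}_t-\mu^{G,i}_t$ solves $d(X^i_t-\mu^i_t)=K^i_t(X^i_t-\mu^i_t)\,dt+\sigma^i_t\,dB^i_t$, which is a centred process. Taking means and variances gives \eqref{eq:moments}, so $\mu^G$ depends only on $G$ and $\vartheta^K$ depends only on $K$.

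Expanding the control cost gives
\[
\mathbb E|K^i_t(X^i_t-\mu^i_t)+G^i_t|^2=(K^i_t)^2\vartheta^i_t+(G^i_t)^2,
\]
since the cross term vanishes. For the quadratic state cost we use the independence of $(\xi^j,B^j)_{j\in I}$. Because $X^{\theta,j}$ is driven only by $(\xi^j,B^j)$, the covariances of distinct components vanish, and so
\[
\mathbb E[X_t^\top Q^iX_t]=\mu_t^\top Q^i\mu_t+\sum_j Q^i_{j,j}\vartheta^j_t .
\]
Similarly,
\[
\mathbb E|X^i_T-d^i|^2=\vartheta^i_T+(\mu^i_T-d^i)^2 .
\]
Summing these terms yields $J^i=J^{1,i}(K)+J^{2,i}(G)$.

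The same computation applied to $\Phi$, with $Q$ in place of $Q^i$ and $\Lambda$ in place of $\gamma^i$, yields $\Phi=\Phi^1(K)+\Phi^2(G)$. Here we use that the diagonal entries of $Q$ are $Q_{i,i}$, so the variance term of the state cost is $\sum_i Q_{i,i}\vartheta^i_t$. This part holds without any symmetry assumption.

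For the potential property of $\Phi^1$, fix $K^{-i}$. Then $\vartheta^j$ for $j\ne i$ does not change when $K^i$ varies. Hence, in $J^{1,i}(\tilde K^i,K^{-i})-J^{1,i}(K^i,K^{-i})$ only the terms involving $\vartheta^i$ survive, namely $\int((K^i)^2+Q^i_{i,i})\vartheta^i\,dt+\gamma^i\vartheta^i_T$. In $\Phi^1$ the same holds, with surviving terms $\int((K^i)^2+Q_{i,i})\vartheta^i\,dt+\gamma^i\vartheta^i_T$. Under Assumption \ref{assumption:Qij_symmetry}, the definition \eqref{eq:Q_matrix} gives $Q_{i,i}=Q^i_{i,i}$, so the two differences agree exactly.

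For $\Phi^2$, fix $G^{-i}$, so that $\mu^j$ is fixed for $j\ne i$, and write $\mu=(\mu^i,\mu^{-i})$. The terms of $J^{2,i}$ that depend on $G^i$ are
\[
\int\Big((G^i)^2+Q^i_{i,i}(\mu^i)^2+\sum_{j\ne i}(Q^i_{i,j}+Q^i_{j,i})\mu^i\mu^j\Big)dt+\gamma^i(\mu^i_T-d^i)^2 .
\]
Since $Q^i$ is symmetric, the cross coefficient equals $2Q^i_{i,j}$. In $\Phi^2$ the corresponding terms are
\[
\int\Big((G^i)^2+Q_{i,i}(\mu^i)^2+\sum_{j\ne i}(Q_{i,j}+Q_{j,i})\mu^i\mu^j\Big)dt+\gamma^i(\mu^i_T-d^i)^2,
\]
where $Q_{i,j}+Q_{j,i}=Q^i_{i,j}+Q^j_{j,i}=2Q^i_{i,j}$ by Assumption \ref{assumption:Qij_symmetry}. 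Every remaining term in either functional does not involve $\mu^i$, so it cancels in the difference. The two differences therefore coincide.

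The only delicate step is the decoupling of the variances: we must check that the cross-covariances $\mathrm{Cov}(X^j_t,X^k_t)$ vanish for $j\ne k$. This follows from the distributed structure, since each $X^j$ is a measurable functional of $(\xi^j,B^j)$ alone, combined with the independence of these families across players. The remaining steps are bookkeeping with \eqref{eq:Q_matrix}.
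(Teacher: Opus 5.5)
Your proof is correct. The decomposition step is the same as the paper's: a mean--variance split that uses independence across players to get $\mathbb{E}[(X_t-\mu_t)^\top Q(X_t-\mu_t)]=\sum_i Q_{i,i}\vartheta^i_t$.

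For the potential identities the paper takes a different, shorter route and does not match coefficients. It notes that changing $K^i$ (resp.\ $G^i$) with all other parameters fixed is a unilateral deviation of player $i$ in $\cV$. In that deviation, $J^{2,i}(G)$ and $\Phi^2(G)$ (resp.\ $J^{1,i}(K)$ and $\Phi^1(K)$) cancel from the differences. The identities then follow from Proposition \ref{prop:Phi_is_alpha_potential} with $C_Q=0$, where the relevant bookkeeping was already done for general distributed policies.

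Your direct coefficient matching is self-contained and slightly more informative. It shows that the identity for $\Phi^1$ only needs $Q_{i,i}=Q^i_{i,i}$, i.e.\ $\sum_{j\neq i}(Q^i_{i,j}-Q^j_{j,i})=0$ for each $i$. The identity for $\Phi^2$, by contrast, uses the full pairwise symmetry through the cross terms $Q_{i,j}+Q_{j,i}=2Q^i_{i,j}$.
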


By Proposition \ref{prop:potential_decomposition}, 
$   \nabla_{K^i} J^{i}( K, G ) =  \nabla_{K^i} \Phi^{1}( K )$ and $ 
         \nabla_{G^i} J^{i}( K,G ) =   \nabla_{G^i} \Phi^2( G )$.
  Moreover,
   the convergence analysis of Algorithm~\ref{algo} reduces to studying the landscapes of the mappings $K \mapsto \Phi^1(K)$ and $G \mapsto \Phi^2(G)$.

We now state two structural properties of the maps $\Phi^1$ and $\Phi^2$. Crucially, these properties hold without the symmetric interaction assumption
(Assumption \ref{assumption:Qij_symmetry}), which will be used to  analyze  learning dynamics in games with asymmetric interactions in Section~\ref{section:asymmetric}.
For the functional $\Phi^1$, we establish a gradient dominance property,
which quantifies the sub-optimality of any parameter $K$  using the normalized gradient 
$\cD^{\Phi}_K$ of $
\Phi^1$ at $K$,   defined by 
\begin{equation}
     \label{eq:DK}
     \cD^{\Phi}_K=(\cD_K^{\Phi,i})_{i\in I}\in L^2([0,T],\sR^N),
     \quad 
   \textnormal{with $ (\mathcal{D}^{\Phi,i}_K)_t \coloneqq  (\nabla_{K^i} \Phi^1(K))_t (\vartheta_t^{K,i})^{-1} $ }.
\end{equation}

\begin{proposition}
    
    \label{prop:graddom_K}
    Suppose Assumption \ref{assumption:Q_positivesemidef} holds,
    and let  $K^{\Phi,*}=(K^{\Phi,*,i})_{i\in I}$ be defined in 
    \eqref{eq:optimal_policy}. Then $K^{\Phi,*}$ is the unique minimizer of $\Phi^1:
    \cK\to \sR$, and for all $K \in \mathcal{K}$,
    \begin{equation*}
        0 \le  \Phi^1(K)-
        \Phi^1(K^{\Phi,*})  \le   \frac{M^{\vartheta,*}}{4} \lVert \mathcal{D}^\Phi_K \rVert_{L^2}^2,
        \quad \textnormal{with
        $M^{\vartheta,*} \coloneqq   \max_{i\in I }\|\vartheta^{K^{\Phi,*},i}\|_{L^\infty}>0.$}
    \end{equation*}

\end{proposition}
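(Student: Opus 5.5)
Since $\vartheta^{K,i}$ in \eqref{eq:moments} depends only on $K^i$, the functional $\Phi^1$ in \eqref{eq:potential_functions_decomp} splits as a sum of single-player terms:
$$
\Phi^1(K)=\sum_{i\in I}\Phi^{1,i}(K^i),\qquad \Phi^{1,i}(K^i)\coloneqq\int_0^T\big((K^i_t)^2+Q_{i,i}\big)\vartheta^{K,i}_t\,dt+\gamma^i\vartheta^{K,i}_T .
$$
So the plan is to prove, for each $i$ separately, a scalar statement about the deterministic control problem $\dot\vartheta=2K\vartheta+\sigma^2$, $\vartheta_0=\mathbb V[\xi^i]>0$, and then sum over $i$. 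Two preliminary facts are needed. First, $Q_{i,i}\ge0$, because Assumption \ref{assumption:Q_positivesemidef} forces the diagonal of $Q_{\mathrm{sym}}$, which equals $(Q_{i,i})_i$, to be nonnegative. Second, $\vartheta^{K,i}_t>0$ for all $t$, by the variation-of-constants formula. In particular $M^{\vartheta,*}\in(0,\infty)$, since $\vartheta^{K^{\Phi,*},i}$ is continuous and positive.

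The main tool is a two-sided performance-difference identity. For fixed $K^i$, let $P^{K,i}$ solve
$$
\dot P+2K^iP+(K^i)^2+Q_{i,i}=0,\qquad P_T=\gamma^i,
$$
which is the analogue of \eqref{eq:PK_ODE} with $Q_{i,i}$ in place of $Q^i_{i,i}$. Differentiating $P^{K,i}_t\vartheta^{K',i}_t$ for an arbitrary $K'\in\cK^i$ and integrating over $[0,T]$ gives
$$
\Phi^{1,i}(K')=P^{K,i}_0\vartheta_0+\int_0^T P^{K,i}_t(\sigma^i_t)^2dt+\int_0^T\Big[(K'_t+P^{K,i}_t)^2-(K^i_t+P^{K,i}_t)^2\Big]\vartheta^{K',i}_t\,dt .
$$

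Two consequences follow from this identity.
\begin{enumerate}
\item Taking $K^i=K^{\Phi,*,i}=-P^i$, with $P^i$ from \eqref{eq:P_symmetric}, gives $P^{K^{\Phi,*},i}=P^i$, which solves the same ODE. The identity becomes
$$
\Phi^{1,i}(K')-\Phi^{1,i}(K^{\Phi,*,i})=\int_0^T(K'_t+P^i_t)^2\vartheta^{K',i}_t\,dt\ge0 .
$$
Since $\vartheta^{K',i}>0$, equality holds only if $K'=-P^i$ a.e. Hence $K^{\Phi,*}$ is the unique minimizer. Existence of $P^i$ on $[0,T]$ is given by Theorem \ref{theorem:optimalpolicy}; it also follows directly from $Q_{i,i}\ge0$ and standard scalar Riccati bounds.
\item Taking a general $K^i$ and $K'=K^{\Phi,*,i}$ in the identity, and dropping the nonnegative square, gives
$$
\Phi^{1,i}(K^i)-\Phi^{1,i}(K^{\Phi,*,i})\le\int_0^T(K^i_t+P^{K,i}_t)^2\vartheta^{K^{\Phi,*},i}_t\,dt\le M^{\vartheta,*}\int_0^T(K^i_t+P^{K,i}_t)^2dt .
$$
\end{enumerate}

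To conclude, identify the gradient. A Gâteaux differentiation of $\Phi^{1,i}$, done exactly as in Lemma \ref{lemma:cost_gradients} or read off from the identity above with $K'=K^i+\varepsilon h$, gives
$$
(\nabla_{K^i}\Phi^1(K))_t=2\big(P^{K,i}_t+K^i_t\big)\vartheta^{K,i}_t,
$$
and therefore $(\cD^{\Phi,i}_K)_t=2(P^{K,i}_t+K^i_t)$. Summing the bound in the second consequence over $i$ yields
$$
\Phi^1(K)-\Phi^1(K^{\Phi,*})\le M^{\vartheta,*}\sum_{i\in I}\|K^i+P^{K,i}\|_{L^2}^2=\frac{M^{\vartheta,*}}4\|\cD^\Phi_K\|_{L^2}^2 .
$$

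The main obstacle is the technical justification for $K\in L^2$ rather than continuous $K$. The ODEs are then Carathéodory, so one has to check that $P^{K,i}$ is absolutely continuous and that the product rule for $P^{K,i}\vartheta^{K',i}$ holds. Both follow because the equations are linear with $L^1$ coefficients, but this should be verified. The same issue arises in the rigorous differentiation behind the gradient formula.
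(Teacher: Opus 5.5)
Your proposal is correct and follows the paper's proof. Your two-sided identity is exactly Lemma \ref{lemma:cost_difference}, since $2(P^{\Phi,K,i}+K^i)(\tilde K^i-K^i)+(\tilde K^i-K^i)^2=(\tilde K^i+P^{\Phi,K,i})^2-(K^i+P^{\Phi,K,i})^2$. The paper likewise plugs in $K=K^{\Phi,*}$ (using $P^{\Phi,K^{\Phi,*},i}=P^i$) to get nonnegativity and uniqueness, and takes $\tilde K=K^{\Phi,*}$ with a completion of squares to get the bound $\frac{M^{\vartheta,*}}{4}\|\mathcal{D}^\Phi_K\|_{L^2}^2$; the only difference is that you derive the identity and $\mathcal{D}^{\Phi,i}_K=2(P^{\Phi,K,i}+K^i)$ yourself rather than citing \cite{giegrich2024convergence,plank2025policy}.
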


The minimizer of $\Phi^1$ coincides with the parameter $K^{\Phi,*}$ of the NE  given in Theorem~\ref{theorem:optimalpolicy}, under the additional Assumption~\ref{assumption:Qij_symmetry} of symmetric interaction.
For  general interaction weights, the minimizer 
$K^{\Phi,*}$ exists but may  not coincide with the NE policies.

For the functional $\Phi^2$, we prove the     strong convexity and Lipschitz smoothness over   $\cG$.
We denote by 
$\nabla_G \Phi^2(G)=(\nabla_{G^1}\Phi^2(G), \ldots, \nabla_{G^N}\Phi^2(G))^\top $
the derivative of the map $G\mapsto \Phi^2(G)$.

\begin{proposition}
    \label{prop:strongconvex_lipschitz_G}
    Suppose Assumption \ref{assumption:Q_positivesemidef} holds. 
    Define 
    $m \coloneqq 2$ and $L \coloneqq 2 + T^2 \lambda_{\mathrm{max}}(Q_{\mathrm{sym}}) + 2 T \max_{i \in I} \gamma^i$.
    Then for all $G, \Tilde{G} \in \mathcal{G}$,
    \begin{align*}
        \frac{m}{2} \lVert \Tilde{G} - G \rVert_{L^2}^2\le 
        \Phi^2(\Tilde{G}) - \Phi^2(G) - \langle \Tilde{G} - G, \nabla_G \Phi^2(G) \rangle_{L^2} \le   \frac{L}{2} \lVert \Tilde{G} - G \rVert_{L^2}^2.
    \end{align*}
\end{proposition}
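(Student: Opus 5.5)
The plan is to exploit that the map $G\mapsto \mu^G$ is affine. From \eqref{eq:moments}, $\mu^G_t = \mu_0 + \int_0^t G_s\,ds$ with $\mu_0 = (\mathbb E[\xi^i])_{i\in I}$. Writing $\mathcal{A}: L^2([0,T],\sR^N)\to L^2([0,T],\sR^N)$ for the Volterra operator $(\mathcal{A}H)_t = \int_0^t H_s\,ds$, we get $\mu^{\tilde G} - \mu^G = \mathcal{A}(\tilde G - G)$. Hence $\Phi^2$ is a quadratic functional on the Hilbert space $\mathcal{G}$.

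The first step is a second-order expansion. Set $H = \tilde G - G$ and $h=\mathcal{A}H$. Since $\mu^\top Q\mu = \mu^\top Q_{\mathrm{sym}}\mu$, a direct computation gives the exact identity
\[
\Phi^2(\tilde G) - \Phi^2(G) - \langle H, \nabla_G\Phi^2(G)\rangle_{L^2} = \|H\|_{L^2}^2 + \int_0^T h_t^\top Q_{\mathrm{sym}} h_t\,dt + h_T^\top \Lambda h_T .
\]
Here the gradient is identified as the linear part, namely $\nabla_G\Phi^2(G)_t = 2G_t + 2\int_t^T Q_{\mathrm{sym}}\mu_s\,ds + 2\Lambda(\mu_T-d)$, via Fubini. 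This is consistent with Lemma \ref{lemma:cost_gradients}, but the inequality does not actually need the explicit form: it only needs the remainder of the quadratic expansion, which is exactly the quadratic form evaluated at $H$.

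The lower bound then follows immediately. By Assumption \ref{assumption:Q_positivesemidef}, $Q_{\mathrm{sym}}\succeq 0$, and $\Lambda\succeq 0$ because $\gamma^i\ge 0$. So the last two terms are nonnegative, and the remainder is at least $\|H\|_{L^2}^2 = \frac{m}{2}\|H\|^2_{L^2}$ with $m=2$.

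For the upper bound I need bounds on $h$. By Cauchy–Schwarz, $|h_t|^2 \le t\int_0^t|H_s|^2ds \le t\|H\|_{L^2}^2$. This gives
\[
\int_0^T h_t^\top Q_{\mathrm{sym}}h_t\,dt \le \lambda_{\max}(Q_{\mathrm{sym}})\,\frac{T^2}{2}\|H\|_{L^2}^2,
\qquad
h_T^\top\Lambda h_T \le \max_i\gamma^i\, T\|H\|_{L^2}^2 .
\]
Summing yields a remainder of at most $\big(1 + \tfrac{T^2}{2}\lambda_{\max}(Q_{\mathrm{sym}}) + T\max_i\gamma^i\big)\|H\|^2_{L^2} = \frac{L}{2}\|H\|_{L^2}^2$, which is exactly the stated constant.

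The only mildly delicate point is justifying that $\Phi^2$ is Fréchet differentiable on $L^2$ with this remainder, i.e. that the bilinear cross terms assemble into $\langle H,\nabla_G\Phi^2(G)\rangle_{L^2}$. This is routine once $\mathcal{A}$ is recognized as a bounded linear operator with adjoint $(\mathcal{A}^*f)_t=\int_t^T f_s\,ds$. No real obstacle is expected beyond careful bookkeeping of the factor $2$ and the symmetrization of $Q$.
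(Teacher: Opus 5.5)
Your proof is correct and is essentially the paper's argument. The paper also writes $\mu^G=\mathbb{E}[\xi]+\Gamma[G]$ with the Volterra operator $\Gamma$ and computes the constant Hessian $2\bigl(\|H\|_{L^2}^2+\langle \Gamma[H], Q_{\mathrm{sym}}\Gamma[H]\rangle_{L^2}+\Gamma[H](T)^\top\Lambda\,\Gamma[H](T)\bigr)$, which is twice your exact remainder. It gets $m=2$ from $Q_{\mathrm{sym}},\Lambda\succeq 0$, and $L$ from the operator-norm bounds $\|\Gamma\|\le T/\sqrt{2}$ and $\|\Gamma(T)\|=\sqrt{T}$, which are exactly your Cauchy--Schwarz estimates. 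One small aside: your gradient of $\Phi^2$ agrees with $\nabla_{G^i}J^i$ from Lemma~\ref{lemma:cost_gradients} only under symmetric interactions, since the latter involves $Q^i$ rather than $Q_{\mathrm{sym}}$. As you note, this plays no role in the inequality.
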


Using Propositions \ref{prop:graddom_K} and \ref{prop:strongconvex_lipschitz_G}, the following theorem establishes  
the  global linear convergence of Algorithm \ref{algo} to the NE policy $ (K^{\Phi,*},G^{\Phi,*})$  given in Theorem \ref{theorem:optimalpolicy}.

\begin{theorem}
    \label{theorem:convergence_potentialcase}
    Suppose Assumptions \ref{assumption:Q_positivesemidef} and \ref{assumption:Qij_symmetry} hold. Let $m, L > 0$ be the constants   in Proposition \ref{prop:strongconvex_lipschitz_G}, and  $(K^{(0)}, G^{(0)}) \in L^\infty([0,T], \mathbb{R}^N) \times L^2([0,T], \mathbb{R}^N)$.
    There exist constants $C_1^K, C_2^K, C_3^K > 0$ such that if $\eta_K^i \in (0, C_1^K)$ and  $\eta_G^i \in (0, 1/L)$ for all $i \in I$,
      the iterates 
      $(K^{(\ell)}, G^{(\ell)})_{\ell \in \mathbb{N}_0}$ generated by 
      Algorithm \ref{algo} satisfy for all $\ell \in \sN_0$,
    \begin{align*}
        \lVert K^{(\ell)} - K^{\Phi,*} \rVert_{L^2}^2 & \le C_2^K  \left( \Phi^1(K^{(0)}) - \Phi^1(K^{\Phi,*}) \right) \left( 1 - \eta_K^{\mathrm{min}} C_3^K \right)^{\ell },
        \\
        \lVert G^{(\ell)} - G^{\Phi,*} \rVert_{L^2}^2 &\le \frac{\eta_G^{\mathrm{max}}}{\eta_G^{\mathrm{min}}} \lVert G^{(0)} - G^{\Phi,*} \rVert_{L^2}^2 \left( 1 - \eta_G^{\mathrm{min}} m \right)^{\ell} ,
    \end{align*}
    where
    $\eta_{K}^\mathrm{min} \coloneqq \min_{i \in I} \eta_K^i$,  
    $\eta_{G}^\mathrm{min} \coloneqq \min_{i \in I} \eta_G^i$ and $\eta_{G}^\mathrm{max} \coloneqq \max_{i \in I} \eta_G^i$.
\end{theorem}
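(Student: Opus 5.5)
The plan is to use Proposition \ref{prop:potential_decomposition} to split Algorithm \ref{algo} into two decoupled iterations, and then treat them separately. Under Assumption \ref{assumption:Qij_symmetry}, $\nabla_{K^i}J^i=\nabla_{K^i}\Phi^1$ and $\nabla_{G^i}J^i=\nabla_{G^i}\Phi^2$. Hence \eqref{eq:potential_GD_update} reads
\[
K^{(\ell+1),i}=K^{(\ell),i}-\eta_K^i(\mathcal D^{\Phi,i}_{K^{(\ell)}}),\qquad G^{(\ell+1)}=G^{(\ell)}-H\nabla_G\Phi^2(G^{(\ell)}),
\]
with $H=\mathrm{diag}(\eta_G^1,\dots,\eta_G^N)$. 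By Theorem \ref{theorem:optimalpolicy} and the decomposition $\Phi=\Phi^1+\Phi^2$, the profile $(K^{\Phi,*},G^{\Phi,*})$ minimizes $\Phi$ over $\mathcal K\times\mathcal G$. So $K^{\Phi,*}$ minimizes $\Phi^1$ (as also stated in Proposition \ref{prop:graddom_K}), $G^{\Phi,*}$ minimizes $\Phi^2$, and $\nabla_G\Phi^2(G^{\Phi,*})=0$.

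\emph{The $G$-part.} This is a standard argument in the weighted norm $\|x\|_{H^{-1}}^2=\langle x,H^{-1}x\rangle_{L^2}$. Write $e=G^{(\ell)}-G^{\Phi,*}$ and $g=\nabla_G\Phi^2(G^{(\ell)})-\nabla_G\Phi^2(G^{\Phi,*})$. Then
\[
\|G^{(\ell+1)}-G^{\Phi,*}\|^2_{H^{-1}}=\|e\|_{H^{-1}}^2-2\langle e,g\rangle+\langle g,Hg\rangle .
\]
Proposition \ref{prop:strongconvex_lipschitz_G} gives co-coercivity, $\langle e,g\rangle\ge \frac1L\|g\|^2$. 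Since $\eta_G^{\max}<1/L$, this yields $\langle g,Hg\rangle\le\langle e,g\rangle$. Proposition \ref{prop:strongconvex_lipschitz_G} also gives strong monotonicity, $\langle e,g\rangle\ge m\|e\|^2\ge m\eta_G^{\min}\|e\|^2_{H^{-1}}$. Combining these, the $H^{-1}$-error contracts by the factor $(1-\eta_G^{\min}m)$ at each step. Converting back with $\|x\|^2\le\eta_G^{\max}\|x\|^2_{H^{-1}}$ and $\|x\|^2_{H^{-1}}\le\|x\|^2/\eta_G^{\min}$ gives the stated bound.

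\emph{The $K$-part.} I would first derive a cost-difference identity for $\Phi^1$, in the spirit of \cite{giegrich2024convergence}. For $K,K'\in\mathcal K$, integrate $\frac{d}{dt}(P^{K,i}_t\vartheta^{K',i}_t)$, using \eqref{eq:PK_ODE} with $Q_{i,i}$ in place of $Q^i_{i,i}$ together with \eqref{eq:moments}. This should give
\[
\Phi^1(K')-\Phi^1(K)=\sum_i\int_0^T\big[(K'^i_t-K^i_t)^2+2(K'^i_t-K^i_t)(K^i_t+P^{K,i}_t)\big]\vartheta^{K',i}_t\,dt,
\]
and in particular $\mathcal D^{\Phi,i}_K=2(P^{K,i}+K^i)$. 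Two consequences follow:
\begin{itemize}
\item Taking $K=K^{\Phi,*}$, where $\mathcal D=0$, gives $\Phi^1(K')-\Phi^1(K^{\Phi,*})=\sum_i\int (K'^i-K^{\Phi,*,i})^2\vartheta^{K',i}$. This yields the $L^2$ bound, with $C_2^K$ equal to the inverse of a lower bound on the variances.
\item Taking $K'=K-\eta_K\mathcal D_K$ gives $\Phi^1(K')-\Phi^1(K)=-\sum_i\eta_K^i(1-\eta_K^i)\int(\mathcal D^{\Phi,i}_K)^2\vartheta^{K',i}$.
\end{itemize}
If $\vartheta^{K',i}\ge \underline\vartheta>0$ uniformly along the iterates, then Proposition \ref{prop:graddom_K} gives
\[
\Phi^1(K^{(\ell+1)})-\Phi^1(K^{\Phi,*})\le\big(1-\eta_K^{\min}(1-C_1^K)\tfrac{4\underline\vartheta}{M^{\vartheta,*}}\big)\big(\Phi^1(K^{(\ell)})-\Phi^1(K^{\Phi,*})\big).
\]
Iterating this recursion produces the claimed rate.

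\emph{Main obstacle.} The hard step is the uniform lower bound on $\vartheta^{K^{(\ell)},i}$, which I would obtain from a uniform $L^\infty$ bound on the iterates. Since $\vartheta_t\ge\mathbb V[\xi^i]\exp(-2\int_0^t|K_s^i|ds)$, it suffices to bound $\|K^{(\ell)}\|_{L^\infty}$, and I would prove this by induction. The update is $K^{(\ell+1),i}=(1-2\eta_K^i)K^{(\ell),i}-2\eta_K^iP^{K^{(\ell)},i}$, which for $\eta_K^i<1/2$ is a convex combination. Hence $\|K^{(\ell+1),i}\|_{L^\infty}\le\max(\|K^{(\ell),i}\|_{L^\infty},\|P^{K^{(\ell)},i}\|_{L^\infty})$. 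The Riccati solution satisfies $P^{K,i}\ge P^{K^{\Phi,*},i}=P^i\ge0$, by comparison using $(K+P)^2\ge0$. Its upper bound should follow from the value-function interpretation: $P^{K,i}_t$ is a cost-to-go bounded via $\Phi^1$, and $\Phi^1(K^{(\ell)})$ is monotone nonincreasing, hence bounded by $\Phi^1(K^{(0)})$. Alternatively, I would control $P$ in terms of $\|K\|_{L^2}$, where the $L^2$ norm is controlled by the sublevel set.

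Making this circular bound close is delicate: it requires the lower bound on $\vartheta$ to bound $\Phi^1$-sublevel sets in $L^2$, and the $L^\infty$ bound to get the lower bound on $\vartheta$. This is where $K^{(0)}\in L^\infty$ and the smallness constant $C_1^K$ enter.
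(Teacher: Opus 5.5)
\textbf{Verdict: there is a genuine gap.} It sits at the step you flag yourself: the uniform $L^\infty$ bound on $P^{K^{(\ell)},i}$. Everything else is correct and follows the paper:
\begin{itemize}
\item your $H^{-1}$-norm for the $G$-part is exactly the paper's rescaling $\hat G=S^{-1}G$ with $S=H^{1/2}$, and co-coercivity plus strong monotonicity gives the same factor $1-\eta_G^{\min}m$;
\item your $K$-part is Lemma~\ref{lemma:cost_difference}, Lemma~\ref{lemma:desc_K} and Proposition~\ref{prop:graddom_K}. Your factor $\eta_K^i(1-\eta_K^i)\vartheta$ is even slightly sharper than the paper's.
\end{itemize}

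Without that bound you have no bound on $\|K^{(\ell)}\|_{L^\infty}$ and no $\underline\vartheta>0$. You therefore get neither the contraction nor $C_2^K$. Both routes you suggest are circular and do not close. The monotonicity $\Phi^1(K^{(\ell)})\le\Phi^1(K^{(0)})$ only yields $P^{K,i}_t\vartheta^{K,i}_t\le\Phi^1(K^{(0)})$, i.e.\ a bound on $P$ in terms of the very variance lower bound you are trying to prove. The $L^2$ sublevel-set route needs $\underline\vartheta$ as well. The smallness $C_1^K$ plays no role in breaking this circle.

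\textbf{Missing idea: the iteration decreases the Riccati function itself, pointwise in $t$, not merely $\Phi^1$.} Fix $i$ and write $\eta=\eta_K^i$, $K=K^{(\ell),i}$, $P=P^{K,i}$. Under Assumption~\ref{assumption:Qij_symmetry} the update is $K'=K-2\eta(K+P)$. Subtracting the two instances of \eqref{eq:PK_ODE} shows that $\Delta=P^{K',i}-P$ solves
\[
\frac{\partial \Delta_t}{\partial t}+2K'_t\Delta_t-4\eta(1-\eta)(K_t+P_t)^2=0,\quad \Delta_T=0,
\qquad\text{so}\qquad
\Delta_t=-4\eta(1-\eta)\int_t^T e^{2\int_t^s K'_r\,dr}(K_s+P_s)^2\,ds\le 0 .
\]
Equivalently, this is your own cost-difference identity applied to the problem started at time $t$ with unit variance and no noise, whose cost is exactly $P^{K,i}_t$.

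Combined with your lower bound $P^{K,i}\ge P^i\ge 0$, this gives $0\le P^{K^{(\ell)},i}\le P^{K^{(0)},i}$ for all $\ell$. Your convex-combination step then yields by induction
\[
\|K^{(\ell),i}\|_{L^\infty}\le\max\bigl(\|K^{(0),i}\|_{L^\infty},\|P^{K^{(0)},i}\|_{L^\infty}\bigr)
\]
for any $\eta_K^i<1/2$. This is the content of the paper's Proposition~\ref{prop:uniform_bound_K}, with constant $\overline C^K_\infty$. Gr\"onwall then gives $\vartheta^{K^{(\ell)},i}\in[M_\vartheta,M^\vartheta]$, and the rest of your argument closes.

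One small detail remains: $\eta_K^{\min}C_3^K<1$ holds automatically, since $\underline\vartheta\le\mathbb{V}[\xi^i]\le M^{\vartheta,*}$. So the base of the power in the stated rate is nonnegative.
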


The precise expressions of $C_1^K$, $C^K_2$ and $C^K_3$ are given in \eqref{eq:CK123}, which do not depend explicitly on   the number of players $N$.
Theorem~\ref{theorem:convergence_potentialcase} implies that the computational complexity of Algorithm~\ref{algo} to achieve an error of $\varepsilon$ scales \emph{linearly   with $N$ and logarithmically with $1/\varepsilon$.}  
 
\begin{corollary}
\label{cor:approximate_NE_symmetric}
Assume the same conditions as in Theorem~\ref{theorem:convergence_potentialcase}. 
For all $\varepsilon > 0$, there exists $M \in \mathbb{N}_0$, depending linearly on $\log{(1/\varepsilon)}$,
 such that 
for all $\ell \ge M$, 
the  policy profile $ u_{\theta^{(\ell)}}$, with $\theta^{(\ell)} = (K^{(\ell)}, G^{(\ell)})$ generated by Algorithm~\ref{algo}, 
is an $\varepsilon$-NE for the game $\sG = (I, (J^i)_{i \in I},  {\cV})$.
\end{corollary}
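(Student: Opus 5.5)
Since $u^*=u_{\theta^*}$ with $\theta^*=(K^{\Phi,*},G^{\Phi,*})$ is an NE in $\cV$ (Theorem~\ref{theorem:optimalpolicy}), for any $i$ and $u^i\in\cV^i$ we have
\[
J^i(u^i,u_{\theta^{(\ell),-i}})-J^i(u_{\theta^{(\ell)}})
= \underbrace{J^i(u^i,u_{\theta^{(\ell),-i}})-J^i(u^i,u^{*,-i})}_{(a)}
+\underbrace{J^i(u^i,u^{*,-i})-J^i(u^*)}_{\ge 0}
+\underbrace{J^i(u^*)-J^i(u_{\theta^{(\ell)}})}_{(c)} .
\]
So it suffices to show that $-(a)-(c)\le \varepsilon$ uniformly in $u^i$, using the linear convergence $\|\theta^{(\ell)}-\theta^*\|_{L^2}^2\le C\rho^\ell$ from Theorem~\ref{theorem:convergence_potentialcase}.

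For term (c), use the decomposition of Proposition~\ref{prop:potential_decomposition}: $J^i(\theta)=J^{1,i}(K)+J^{2,i}(G)$. Both pieces are locally Lipschitz in $L^2$. The moments $\vartheta^K$ and $\mu^G$ solve the ODEs \eqref{eq:moments}, and by Gronwall they depend Lipschitz-continuously on $K$ and $G$ on bounded sets. For $G$ this is immediate, since $\mu^G$ is an integral of $G$. For $K$, the iterates must stay in a bounded set. I would take this from the proof of Theorem~\ref{theorem:convergence_potentialcase}, where the sublevel sets of $\Phi^1$ keep $K^{(\ell)}$ uniformly bounded in $L^\infty$, or at least give a uniform bound on $\vartheta^{K^{(\ell)}}$ in $L^\infty$ and on $K^{(\ell)}$ in $L^2$. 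This gives $|(c)|\le C\|\theta^{(\ell)}-\theta^*\|_{L^2}$.

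Term (a) is the main obstacle, because it must be bounded uniformly over all deviations $u^i\in\cV^i$. Since policies are distributed and the noises are independent, $X^j$ for $j\neq i$ is unaffected by $u^i$. In $J^i$, the part depending on the other players is
\[
\mathbb{E}\Bigl[\textstyle\sum_{j,k\ne i}Q^i_{j,k}X^jX^k+2\sum_{j\ne i}Q^i_{i,j}X^iX^j\Bigr].
\]
The first sum does not depend on $u^i$. The cross term factorises by independence as $2\sum_{j\neq i}Q^i_{i,j}\,\mathbb{E}[X^{u^i,i}_t]\,\mu^j_t$. Hence
\[
(a)=\int_0^T\Bigl(\text{terms in }\vartheta^{K^{(\ell)},j},\mu^{G^{(\ell)},j}\text{ minus those at }\theta^*\Bigr)dt
+2\sum_{j\ne i}Q^i_{i,j}\int_0^T \mathbb{E}[X^{u^i,i}_t]\bigl(\mu^{(\ell),j}_t-\mu^{*,j}_t\bigr)\,dt .
\]
The first group is bounded by $C\|\theta^{(\ell)}-\theta^*\|$. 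The second requires a bound on $\mathbb{E}[X^{u^i,i}]$, which is not uniform over $\cV^i$. To handle this, I would restrict to deviations that could improve the cost. If $J^i(u^i,\cdot)$ is large, the inequality is trivial. Otherwise, coercivity of $\mathbb{E}\int|u^i|^2$ together with $Q^i\succeq0$ and the dynamics gives
\[
\|\mathbb{E}[X^{u^i,i}]\|_{L^2}\le C\bigl(1+J^i(u^i,u_{\theta^{(\ell),-i}})\bigr)^{1/2}.
\]
Near-optimal deviations have bounded cost, since $J^i(u_{\theta^{(\ell)}})$ is bounded along the iterates. Alternatively, one can absorb the cross term with Young's inequality against the quadratic control cost. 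Either way, (a) is at most $C\|\theta^{(\ell)}-\theta^*\|_{L^2}$ up to this restriction.

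Combining the bounds gives $\sup_{u^i}\bigl[J^i(u_{\theta^{(\ell)}})-J^i(u^i,u_{\theta^{(\ell),-i}})\bigr]\le C'\rho^{\ell/2}$. Choosing $M=\lceil 2\log(C'/\varepsilon)/\log(1/\rho)\rceil$, which is linear in $\log(1/\varepsilon)$, completes the proof.
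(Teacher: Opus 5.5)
Your argument is correct, but it takes a different route from the paper. The paper never bounds deviations term by term. Since $C_Q=0$, Proposition~\ref{prop:Phi_is_alpha_potential} makes $\Phi$ an exact potential on all of $\cV$. Moreover, $u^*$ minimises $\Phi$ over $\cV$ (Theorem~\ref{theorem:optimalpolicy}). Hence for every $u^i\in\cV^i$,
\begin{align*}
J^i(u_{\theta^{(\ell)}})-J^i(u^i,u^{-i}_{\theta^{(\ell)}})
&=\Phi(u_{\theta^{(\ell)}})-\Phi(u^i,u^{-i}_{\theta^{(\ell)}})\le \Phi(u_{\theta^{(\ell)}})-\Phi(u^*)\\
&=\bigl(\Phi^1(K^{(\ell)})-\Phi^1(K^{\Phi,*})\bigr)+\bigl(\Phi^2(G^{(\ell)})-\Phi^2(G^{\Phi,*})\bigr).
\end{align*}
Both gaps decay linearly: the first by \eqref{eq:thm_K_conv_argument}, the second by $L$-smoothness of $\Phi^2$ together with the $G$-estimate of Theorem~\ref{theorem:convergence_potentialcase}. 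This makes uniformity over arbitrary (nonlinear, moment-unbounded) deviations free. It also controls the $\varepsilon$-NE gap by the potential suboptimality itself rather than by its square root.

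Your route uses the potential structure only through the fact that $u^*$ is an NE. You then turn parameter convergence into an $\varepsilon$-NE via three ingredients:
\begin{itemize}
\item Lipschitz estimates on $J^{1,i}$ and $J^{2,i}$;
\item the independence factorisation of the cross term;
\item the coercivity bound $\|\mathbb{E}[X^{u^i,i}]\|_{L^2}\le C(1+J^i(u^i,\cdot))^{1/2}$, valid since $Q^i\succeq 0$ and $\gamma^i\ge 0$, which disposes of deviations that cannot improve the cost.
\end{itemize}
This generic ``parameter convergence to an NE implies approximate NE'' argument works here and still gives $M$ linear in $\log(1/\varepsilon)$. It is longer, though, and its rate is $\rho^{\ell/2}$, with constants depending on $\sup_\ell J^i(u_{\theta^{(\ell)}})$ and on bounds for the iterates.

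One small correction: the uniform $L^\infty$ bound on $K^{(\ell)}$ comes from Proposition~\ref{prop:uniform_bound_K} (implicit regularisation of the normalised update), not from sublevel sets of $\Phi^1$, which only control a weighted $L^2$ norm. This creates no gap, because the $L^2$-boundedness implied by convergence already suffices for your Lipschitz estimates: it bounds $\int_s^t K$, so $\vartheta^K$ is bounded and Lipschitz in $K$ in sup-norm.
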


\section{Learning approximate NEs under asymmetric interactions}
\label{section:asymmetric}

This section considers the LQ game $\sG$ without  Assumption \ref{assumption:Qij_symmetry}. Although the function $\Phi$ in \eqref{eq:Phi_general_def} is no longer  an exact potential function for the game $\sG$, we can still leverage it to design convergent independent learning algorithms for   approximate NEs of $\sG$.

\subsection{Characterization of  distributed NEs}

This section presents a verification theorem for affine NEs of the game $\sG$ with general interactions. This  characterization is   of independent theoretical interest, 
and will be used in Section \ref{section:numerical} to construct benchmarks for numerical experiments.

Consider the following coupled   ODE system: 
for all $i \in I$ and $t \in [0,T]$,
\begin{subequations}
    \label{eq:ODEs_asymmetric}
    \begin{align}[left = \empheqlbrace\,]
        &\frac{\partial P_t^{i}}{\partial t} - (P_t^{i})^2 + Q_{i,i}^i = 0, \quad P_T^{i} = \gamma^i,
        \label{eq:P_asymmetric}\\
        &\frac{\partial \lambda_t^i}{\partial t} - P_t^i \lambda_t^i + \sum_{j \in I\setminus \{i\}} Q_{i,j}^i \mu_t^j = 0, \quad \lambda_T^i = - \gamma^i d^i, 
        \label{eq:lambda_asymmetric}\\
        &\frac{\partial \mu_t^i}{\partial t} = - P_t^i \mu_t^i - \lambda_t^i, \quad \mu_0^i = \mathbb{E}[\xi^i] \label{eq:ODE_system_asym_mean}.
    \end{align}
\end{subequations}

Note that, in contrast to the ODE system \eqref{eq:ODEs_valuefunction} for the symmetric case, the system \eqref{eq:ODEs_asymmetric} is coupled across all players through \eqref{eq:lambda_asymmetric}.
The following theorem constructs NEs for the game $\sG$ 
through   solutions to \eqref{eq:ODEs_asymmetric}.

\begin{theorem}
    \label{theorem:optimalpolicy_asymmetric}
    Suppose that  \eqref{eq:ODEs_asymmetric} has a   solution $(P^i)_{i\in I}, (\mu^i)_{i\in I}, (\lambda^i)_{i\in I} \in \mathcal{C}([0,T], \mathbb{R}^N)$. Define $u^{*} \in \mathcal{V}$ such that 
    for all $i\in I$ and $(t,x) \in [0,T] \times \mathbb{R}$, 
    \begin{equation}
        \label{eq:NE_policies_def}
        u^{*,i}(t,x) \coloneqq 
        - P_t^{i} x - \lambda_t^i.
    \end{equation}
    Then $u^* $ 
    is an NE of the game $\sG = (I, (J^i)_{i \in I},  {\cV})$. 

\end{theorem}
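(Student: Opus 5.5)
The plan is to fix a player $i\in I$, freeze the other players at $u^{*,-i}$, and show that $u^{*,i}$ solves player $i$'s resulting single-agent control problem over $\mathcal V^i$. This reduces to a classical LQ verification argument.

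\textbf{Step 1: the others' states.} For $j\neq i$, the policy $u^{*,j}(t,x)=-P^j_tx-\lambda^j_t$ is affine with continuous coefficients. Hence $X^{*,j}$ is a Gaussian-type linear SDE solution, driven only by $\xi^j$ and $B^j$. Taking expectations, its mean solves $\frac{d}{dt}\mathbb E[X^{*,j}_t]=-P^j_t\mathbb E[X^{*,j}_t]-\lambda^j_t$ with initial value $\mathbb E[\xi^j]$. By uniqueness for this linear ODE, $\mathbb E[X^{*,j}_t]=\mu^j_t$ from \eqref{eq:ODE_system_asym_mean}.

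\textbf{Step 2: reduction to a single-agent problem.} Take any $u^i\in\mathcal V^i$. Because $u^i$ is distributed, $X^{u^i,i}$ is a measurable functional of $(\xi^i,B^i)$ only. It is therefore independent of $(X^{*,j})_{j\neq i}$, by the mutual independence of the $\xi$'s and the Brownian components. Expand $X_t^\top Q^iX_t$ using the symmetry of $Q^i$. The cross terms give
\[
\mathbb E[2Q^i_{i,j}X^i_tX^j_t]=2Q^i_{i,j}\,\mathbb E[X^i_t]\,\mu^j_t .
\]
All terms not involving $X^i$ do not depend on $u^i$. Hence
\[
J^i(u^i,u^{*,-i})=\tilde J^i(u^i)+c^i,
\]
where $c^i$ is independent of $u^i$, $f^i_t\coloneqq\sum_{j\neq i}Q^i_{i,j}\mu^j_t$, and
\[
\tilde J^i(u^i)=\mathbb E\Big[\int_0^T\big(|u^i(t,X^i_t)|^2+Q^i_{i,i}|X^i_t|^2+2f^i_tX^i_t\big)\,dt+\gamma^i|X^i_T-d^i|^2\Big].
\]
This is a standard scalar LQ problem with a deterministic linear term.

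\textbf{Step 3: verification.} Set
\[
V(t,x)=P^i_tx^2+2\lambda^i_tx+\kappa_t,\qquad \kappa_t=\int_t^T\big((\sigma^i_s)^2P^i_s-(\lambda^i_s)^2\big)\,ds+\gamma^i(d^i)^2 .
\]
The terminal conditions in \eqref{eq:P_asymmetric}--\eqref{eq:lambda_asymmetric} give $V(T,x)=\gamma^i(x-d^i)^2$. The HJB equation to check is
\[
\partial_tV+\tfrac12(\sigma^i_t)^2\partial_{xx}V+\min_{a}\{a\,\partial_xV+a^2\}+Q^i_{i,i}x^2+2f^i_tx=0 .
\]
The minimizer is $a=-\partial_xV/2=-P^i_tx-\lambda^i_t=u^{*,i}(t,x)$. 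Matching powers of $x$ gives exactly \eqref{eq:P_asymmetric} for the $x^2$ term, \eqref{eq:lambda_asymmetric} for the $x$ term, and the choice of $\kappa$ for the constant term.

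Next apply Itô's formula to $V(t,X^{u^i,i}_t)$ for an arbitrary $u^i\in\mathcal V^i$. Completing the square yields
\[
\tilde J^i(u^i)=\mathbb E[V(0,\xi^i)]+\mathbb E\int_0^T|u^i(t,X^i_t)-u^{*,i}(t,X^i_t)|^2\,dt ,
\]
which is at least $\tilde J^i(u^{*,i})$, with equality at $u^{*,i}$. Finally, $u^{*,i}\in\mathcal V^i$, since an affine policy with bounded continuous coefficients gives a unique strong solution with finite second moments.

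\textbf{Main obstacle.} The delicate point is the rigor of Step 3 for a general $u^i\in\mathcal V^i$, which need not be bounded or Lipschitz. One must show that the stochastic integral $\int_0^t 2\sigma^i_s(P^i_sX^i_s+\lambda^i_s)\,dB^i_s$ is a true martingale. This follows because $P^i,\lambda^i,\sigma^i$ are continuous, hence bounded, and $\mathbb E[\sup_t|X^{u^i,i}_t|^2]<\infty$ by the definition of $\mathcal V^i$. The integrand is then in $L^2(\Omega\times[0,T])$, and the expectation of the Itô integral vanishes. The same integrability justifies exchanging expectation and time integrals in the cost.

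A secondary point to state carefully is the independence used in Step 2. It relies on strong uniqueness: $X^{u^i,i}$ is then adapted to the filtration generated by $(\xi^i,B^i)$. With both steps in place, $J^i(u^{*,i},u^{*,-i})\le J^i(u^i,u^{*,-i})$ holds for every $i\in I$, which proves the theorem.
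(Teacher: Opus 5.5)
Your proposal is correct and follows essentially the same route as the paper: freeze the other players at $u^{*,-i}$, use independence so that only their means $\mu^j$ (from \eqref{eq:ODE_system_asym_mean}) enter player $i$'s problem, and verify the HJB equation with the quadratic ansatz $V^i(t,x)=P^i_t x^2+2\lambda^i_t x+\kappa^i_t$, where matching powers of $x$ recovers \eqref{eq:P_asymmetric}--\eqref{eq:lambda_asymmetric}. The only differences are cosmetic. The paper keeps the $u^i$-independent terms (the other players' variances $\vartheta^j$ and mean products) inside $\kappa^i$ and cites the dynamic-programming sufficient condition. You absorb those terms into the constant $c^i$ and carry out the It\^o/completion-of-squares verification and martingale check explicitly.
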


Theorem \ref{theorem:optimalpolicy_asymmetric} 
generalizes Theorem \ref{theorem:optimalpolicy}
to arbitrary interaction weights $(Q^i)_{i\in I}$.
The policy 
\eqref{eq:NE_policies_def} can be written in the form 
\eqref{eq:optimal_policy} 
as $  u^{*,i}(t,x) \coloneqq K_t^{*,i} (x - \sE[X^{u^{*,i},i}_t]) + G_t^{*,i}$, where 
$  
        K_t^{*,i} \coloneqq - P_t^{i}$ and $ G_t^{*,i} \coloneqq - \lambda_t^i - P_t^{i} \mu_t^i$.
Under    Assumptions \ref{assumption:Q_positivesemidef}  and \ref{assumption:Qij_symmetry}, these policy coefficients  
coincide with those of the NE   \eqref{eq:optimal_policy}
in Theorem \ref{theorem:optimalpolicy}.

For completeness, we provide an analogous condition to Assumption \ref{assumption:Q_positivesemidef} that ensures the well-posedness of \eqref{eq:ODEs_asymmetric}, and hence the existence of an NE  for the game $\sG$ with asymmetric interactions. Note that this condition will not be imposed in the subsequent analysis of learning algorithms in Section \ref{subsection:convergence_asymmetric}, since our focus there is on approximate NEs.

\begin{proposition}
    \label{prop:ODE_system_asym_sol}
    Define   $\Hat{Q} =(\Hat{Q}_{i,j})_{i,j\in I} \in \mathbb{R}^{N \times N}$  by $\Hat{Q}_{i,j} \coloneqq Q_{i,j}^i$ for all $i,j \in I$.
    If the following boundary value problem admits only the trivial solution 
    $y \equiv \mathbf{0}$:
    \begin{equation}
        \label{eq:BVP_mu_auxiliary}
        \frac{\partial^2 y_t}{\partial t^2}  = \Hat{Q} y_t, \quad t \in [0,T]; \quad y_0 = \mathbf{0}, \quad \frac{\partial y_T}{\partial t}  = - \Lambda y_T,
    \end{equation}
    then   the  system \eqref{eq:ODEs_asymmetric} has a unique solution, and 
    $u^*$ defined in   \eqref{eq:NE_policies_def}
    is an NE for the game $\sG = (I, (J^i)_{i \in I},  {\cV})$.
    In particular, this condition holds if  
    $\Hat{Q}_{\mathrm{sym}} \coloneqq (\Hat{Q} + \Hat{Q}^\top)/2 \in \sS^N_{\ge 0}$. 
 
\end{proposition}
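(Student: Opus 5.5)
Note first that the Riccati equations \eqref{eq:P_asymmetric} decouple from the rest. Each is a scalar backward Riccati equation $\partial_t P^i = (P^i)^2 - Q^i_{i,i}$ with $P^i_T=\gamma^i\ge 0$ and $Q^i_{i,i}\ge 0$ (a diagonal entry of $Q^i\in\sS^N_{\ge0}$). By the standard comparison argument it has a unique global solution on $[0,T]$ with $P^i\ge 0$. What remains is to show that the linear two-point boundary value problem for $(\lambda,\mu)$, consisting of \eqref{eq:lambda_asymmetric} and \eqref{eq:ODE_system_asym_mean}, is uniquely solvable. Writing $P_t=\mathrm{diag}(P^1_t,\dots,P^N_t)$ and letting $\hat Q^{\mathrm{off}}$ be $\hat Q$ with its diagonal set to zero, the system reads
\[
\dot\mu = -P\mu-\lambda,\quad \dot\lambda = P\lambda - \hat Q^{\mathrm{off}}\mu,\quad \mu_0=\sE[\xi],\quad \lambda_T=-\Lambda d .
\]
It is linear with continuous coefficients, so by the Fredholm alternative for linear BVPs (equivalently, invertibility of the finite-dimensional shooting map $\lambda_0\mapsto\lambda_T$, which is affine) existence and uniqueness for all data is equivalent to uniqueness for the homogeneous problem, where $\mu_0=0$ and $\lambda_T=0$.

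\emph{Reduction to \eqref{eq:BVP_mu_auxiliary}.} Take a homogeneous solution $(\mu,\lambda)$ and set $y=\mu$. Then $\lambda=-\dot y-Py$. Since $\lambda$ is $C^1$, $y$ is $C^2$, and differentiating gives
\[
\ddot y = -\dot P y - P\dot y - \dot\lambda .
\]
Substitute $\dot\lambda = P\lambda-\hat Q^{\mathrm{off}}y = -P\dot y - P^2 y - \hat Q^{\mathrm{off}}y$ and $\dot P = P^2-\mathrm{diag}(Q^i_{i,i})$. The $P\dot y$ and $P^2y$ terms cancel, leaving
\[
\ddot y = \mathrm{diag}(Q^i_{i,i})\,y + \hat Q^{\mathrm{off}}y = \hat Q y .
\]
The boundary conditions become $y_0=0$ and $0=\lambda_T=-\dot y_T-\Lambda y_T$, since $P_T=\Lambda$, i.e.\ $\dot y_T=-\Lambda y_T$. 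This is exactly \eqref{eq:BVP_mu_auxiliary}. By hypothesis $y\equiv 0$, hence $\lambda=-\dot y-Py\equiv 0$, so the homogeneous problem has only the trivial solution. This gives a unique solution of \eqref{eq:ODEs_asymmetric}, and Theorem \ref{theorem:optimalpolicy_asymmetric} then shows that $u^*$ is an NE.

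\emph{The sufficient condition.} Suppose $\hat Q_{\mathrm{sym}}\succeq 0$ and $y$ solves \eqref{eq:BVP_mu_auxiliary}. Integrating $\langle y,\ddot y\rangle$ by parts and using $y_0=0$ gives
\[
\langle y_T,\dot y_T\rangle - \int_0^T|\dot y_t|^2\,dt = \int_0^T y_t^\top \hat Q_{\mathrm{sym}} y_t\,dt \ge 0 .
\]
With $\dot y_T=-\Lambda y_T$ this becomes
\[
-y_T^\top\Lambda y_T - \|\dot y\|_{L^2}^2 \ge 0 .
\]
Since $\Lambda\succeq 0$, this forces $\dot y\equiv 0$, and $y_0=0$ then gives $y\equiv 0$.

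The main obstacle is the justification step: existence for the inhomogeneous BVP from uniqueness of the homogeneous one. I would make it explicit. Let $\Phi(t)$ be the fundamental matrix of the $2N$-dimensional linear system. The map from the unknown initial value $\lambda_0$ to $\lambda_T$ is affine, with linear part equal to the lower-right block of $\Phi(T)$. If that block had a kernel, a nonzero $\lambda_0$ in it would give a nontrivial homogeneous solution, contradicting the argument above. So the block is invertible, and $\lambda_0$ is uniquely determined.
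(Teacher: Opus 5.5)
Your proposal is correct and takes essentially the same route as the paper. Both solve the Riccati equations separately, eliminate $\lambda$ to obtain $\ddot\mu=\hat Q\mu$, reduce invertibility of a finite-dimensional shooting matrix to the trivial-solution hypothesis on \eqref{eq:BVP_mu_auxiliary}, and prove the sufficient condition by the same integration by parts. The only difference is that you shoot on $\lambda_0$ in the first-order $(\mu,\lambda)$ system, while the paper shoots on $\dot\mu_0$ in the second-order problem; the two are related by $\lambda_0=-\dot\mu_0-P_0\mu_0$. Your version has a small advantage: you only eliminate $\lambda$ in the homogeneous problem, whereas the paper implicitly uses the converse step of reconstructing $\lambda$ from a solution $\mu$ of the inhomogeneous second-order problem.
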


\subsection{Independent  policy gradient algorithm and its convergence}
\label{subsection:convergence_asymmetric}

This section proposes  an independent  policy gradient algorithm for the game $\sG$ with asymmetric interactions,
and analyzes its convergence through   the $\alpha$-potential function $\Phi$ defined in \eqref{eq:Phi_general_def}.

\subsubsection{Projected policy gradient algorithm}

To approximate NEs with asymmetric interactions, we consider affine policies as defined in \eqref{eq:policy_parameterization}, but require the policy parameters to satisfy certain   a priori bounds.
This allows us to control the misalignment between individual policy gradients and the gradient of the $\alpha$-potential function  $\Phi$.

Specifically, 
we    modify Algorithm \ref{algo} by replacing the gradient descent update for $G$ in \eqref{eq:potential_GD_update} with a projected gradient descent update. 
For each iteration $\ell\in \sN_0$, given the parameter profile 
$\theta^{(\ell)}\coloneqq (K^{(\ell)}, G^{(\ell)})$,
consider the following update: for all $i\in I$, 
\begin{equation}
    \label{eq:alphapot_G_update_projection}
    \begin{split}
        &K_t^{(\ell+1),i} \coloneqq K_t^{(\ell),i} - \eta_K^i (\nabla_{K^i} J^i(\theta^{(\ell)} ) )_t (\vartheta_t^{(\ell),i})^{-1}, \quad t\in [0,T] \\
        &{G_t^{(\ell+1),i} \coloneqq  \cP_{\overline{C}_G} \big(G^{(\ell),i} - \eta_G^i \nabla_{G^i} J^i(\theta^{(\ell)} )\big)_t}, \quad t\in [0,T],
    \end{split}
\end{equation}
where $\eta_K^i,\eta^i_G>0$
are given learning rates, 
$\nabla_{K^i} J^i$ and $\nabla_{G^i} J^i$ are the  derivatives of $J^i$ with respect to $K^i$ and $G^i$, respectively, 
 $\overline{C}_G>0$ is a given constant, 
and $\cP_{\overline{C}_G}$
is the (orthogonal) projection   
onto the $L^2$-ball with radius $\overline{C}_G$: 
$$
\cP_{\overline{C}_G}(f) =\min\left(1,\frac{\overline{C}_G }{\|f\|_{L^2}}\right)f, \quad \forall f\in L^2([0,T],\sR).
$$

\begin{remark}[\textbf{Implicit Regularization in $K$}]
    In \eqref{eq:alphapot_G_update_projection}, we only project the intercept parameter $G$, while keeping the normalized gradient descent update for $K$ as in \eqref{eq:potential_GD_update}. This is because the normalized gradient descent automatically ensures that the policy iterates $(K^{(\ell)})_{\ell \in \mathbb{N}}$ are uniformly bounded, even in the setting with asymmetric interactions (see Proposition \ref{prop:uniform_bound_K}). This {implicit regularization property} is crucial for the convergence analysis of policy gradient methods in continuous-time problems 
    \cite[Remark 2.3]{giegrich2024convergence}.
\end{remark}

Below we summarize the 
 policy gradient algorithm for games    with asymmetric interactions.  

\begin{algorithm}[H]
\caption{Independent Policy Gradient Learning for Asymmetric Interactions}
\label{algo_projected}
\begin{algorithmic}[1]
\STATE \textbf{Input:} 
Initial parameters $(K^{(0),i}, G^{(0),i})_{i\in I}$,
learning rates $ \eta_K^i, \eta_G^i\in (0,\infty)$ for all $i\in I$, 
and the projection threshold $\overline{C}_G>0$. 
\FOR{$\ell = 1, 2, \dots$}
    \STATE 
    Obtain the updated parameters  $ (K^{(\ell),i},G^{(\ell),i} )_{i\in I}$ by \eqref{eq:alphapot_G_update_projection}. 
\ENDFOR
\end{algorithmic}
\end{algorithm}

 \subsubsection{Convergence analysis}

Employing the affine policies in \eqref{eq:policy_parameterization} implies that both the individual cost $J^i$ 
and the $\alpha$-potential 
function $\Phi$
remain  decomposable as shown in Proposition \ref{prop:potential_decomposition}, namely,
 $J^i(K,G) = J^{1,i}(K) + J^{2,i}(G)$ and $ \Phi(K, G) = \Phi^1(K) + \Phi^2(G)
$, 
where 
$J^{1,i}$ and $J^{2,i}$  
are 
defined in \eqref{eq:cost_functionals_decomp},
and  
  $\Phi^{1}$ and $\Phi^{2}$ are
defined in \eqref{eq:potential_functions_decomp}.
This allows us to analyze the updates for $K$ and $G$ in Algorithm~\ref{algo_projected} separately. 
However, unlike the symmetric-interaction setting of Section~\ref{section:symmetric}, the game $\sG$ is no longer an exact potential game, and the gradients   $\nabla_{K^i}J^{i}$ and $\nabla_{K^i}\Phi^1$ (resp.~$\nabla_{G^i}J^{i}$ and $\nabla_{G^i}\Phi^2$) differ.
To analyze Algorithm~\ref{algo_projected}, we interpret it as a biased gradient descent on the functions $\Phi^1$ and $\Phi^2$, and   quantify the resulting bias in terms of the constant $C_Q$.

The following lemma analyzes  the gap between the gradients of $J^i$ and $\Phi^1$ with respect to $K$.
The result leverages the uniform bound $\sup_{\ell \in \mathbb{N}_0} \|K^{(\ell)}\|_{L^\infty} \le \overline{C}_\infty^K$ (see Proposition~\ref{prop:uniform_bound_K}) and the Lipschitz stability of the ODE characterizations of $\nabla_{K^i} J^i$ and $\nabla_{K^i} \Phi^1$.

\begin{lemma}
    \label{lemma:K_gradient_bound}
     Let $K^{(0)} \in L^\infty([0,T], \mathbb{R}^N)$, and  
       $\eta^i_K\in (0,1/2)$ for all   $i\in I$.
     There exists $\overline{C}_\infty^K>0$ such that 
     the iterates  $ (K^{(\ell)},G^{(\ell)})_{\ell \in \mathbb{N}}$  from Algorithm~\ref{algo_projected} satisfy  for all $\ell\in \sN$,
     $$
     \max_{i \in I} \| \nabla_{K^i} J^i(K^{(\ell)},G^{(\ell)} )  (\vartheta^{K^{(\ell)},i})^{-1}- \nabla_{K^i} \Phi^1(K^{(\ell)})  (\vartheta^{K^{(\ell)},i})^{-1}\|_{L^2}
     \le \frac{\exp{\left( 2 \overline{C}_\infty^K T \right)}}{ 4 (\overline{C}_\infty^K)^{3/2}}C_Q.
     $$
    
\end{lemma}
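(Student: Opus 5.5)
The plan is to write both normalized gradients explicitly through Riccati-type ODEs, show that their difference solves a linear ODE forced by a term of size at most $C_Q/2$, and bound that solution using the uniform bound on the iterates $K^{(\ell)}$.

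\textbf{Step 1: the two normalized gradients.} By Lemma~\ref{lemma:cost_gradients},
\[
(\nabla_{K^i} J^i(\theta))_t(\vartheta^{K,i}_t)^{-1} = 2(P^{K,i}_t + K^i_t),
\]
where $P^{K,i}$ solves \eqref{eq:PK_ODE}, whose forcing term is $Q^i_{i,i}$. Next I compute $\nabla_{K^i}\Phi^1$. In \eqref{eq:potential_functions_decomp}, $\Phi^1(K)$ is a sum over $j$ of decoupled terms. The $j$-th term depends only on $K^j$, through $\vartheta^{K,j}$, which solves \eqref{eq:moments}. Hence $\nabla_{K^i}\Phi^1$ is the gradient of the single-player functional
\[
\int_0^T \big((K^i_t)^2+Q_{i,i}\big)\vartheta^i_t\,dt+\gamma^i\vartheta^i_T .
\]
This functional has the same structure as $J^{1,i}$ viewed as a function of $K^i$; the only difference is that the running weight $Q^i_{i,i}$ is replaced by $Q_{i,i}$. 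The terms $\sum_{j\neq i}Q^i_{j,j}\vartheta^j$ in $J^{1,i}$ do not depend on $K^i$. The same adjoint computation that proves Lemma~\ref{lemma:cost_gradients} therefore gives
\[
(\nabla_{K^i}\Phi^1(K))_t(\vartheta^{K,i}_t)^{-1}=2(\tilde P^{K,i}_t+K^i_t),
\]
where $\tilde P^{K,i}$ solves \eqref{eq:PK_ODE} with $Q^i_{i,i}$ replaced by $Q_{i,i}$.

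\textbf{Step 2: the ODE for the difference.} Set $\delta^i \coloneqq P^{K,i}-\tilde P^{K,i}$. Subtracting the two equations, the quadratic terms $(K^i_t)^2$ cancel and we obtain
\[
\partial_t\delta^i_t+2K^i_t\delta^i_t+(Q^i_{i,i}-Q_{i,i})=0,\qquad \delta^i_T=0 .
\]
By \eqref{eq:Q_matrix} and \eqref{eq:cw},
\[
|Q^i_{i,i}-Q_{i,i}|=\tfrac12\Big|\sum_{j\neq i}(Q^i_{i,j}-Q^j_{j,i})\Big|\le C_Q/2 .
\]
Variation of constants gives
\[
\delta^i_t=\int_t^T \exp\Big(2\int_t^s K^i_r\,dr\Big)(Q^i_{i,i}-Q_{i,i})\,ds .
\]

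\textbf{Step 3: the uniform bound and the $L^2$ estimate.} This is the main obstacle: the bound must hold uniformly along all iterates, which requires $\sup_\ell\|K^{(\ell)}\|_{L^\infty}\le\overline{C}^K_\infty$. This is exactly Proposition~\ref{prop:uniform_bound_K}, which I invoke; it is where the hypotheses $\eta_K^i<1/2$ and $K^{(0)}\in L^\infty$ are used. Under this bound,
\[
2|\delta^i_t|\le C_Q\int_t^T e^{2\overline{C}^K_\infty(s-t)}\,ds\le \frac{C_Q\,e^{2\overline{C}^K_\infty(T-t)}}{2\overline{C}^K_\infty}.
\]
Squaring and integrating over $[0,T]$ gives
\[
\|2\delta^i\|_{L^2}^2\le \frac{C_Q^2\,e^{4\overline{C}^K_\infty T}}{16(\overline{C}^K_\infty)^3}.
\]
Taking square roots gives the claimed constant $e^{2\overline{C}^K_\infty T}/\big(4(\overline{C}^K_\infty)^{3/2}\big)$ times $C_Q$. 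The bound is uniform in $i$ and $\ell$, so taking the maximum over $i\in I$ concludes the proof.
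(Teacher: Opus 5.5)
Your proposal is correct and follows the paper's proof essentially line by line. The paper likewise writes the difference of normalized gradients as $2(P^{K,i}-P^{\Phi,K,i})$ via Lemma~\ref{lemma:cost_gradients} and Lemma~\ref{lemma:cost_difference}, solves the resulting linear ODE (forcing bounded by $C_Q/2$) by variation of constants, and bounds it using Proposition~\ref{prop:uniform_bound_K}, arriving at exactly your constants. Your kernel $\exp\bigl(2\int_t^s K^i_r\,dr\bigr)$ has the correct sign; the paper's displayed ODE and kernel contain a sign slip, which does not affect the absolute-value bound.
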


To quantify the 
  gap between  $\nabla_{G^i}J^i$ and $\nabla_{G^i}\Phi^2$, we introduce the following   bounds  on the moments of the  state processes:
  for each $\mathfrak B>0$, let 
  \begin{equation}
  \label{eq:moment_bound_afffine}M^\mu_{\mathrm{aff}}(\mathfrak B) \coloneqq \max_{i \in I} \sup_{G \in \cG^i, \|G\|_{L^2}\le \mathfrak B  }\lVert \mu^{G,i}  \rVert_{L^2},
    \quad M^\vartheta_{\mathrm{aff}}(\mathfrak B) \coloneqq \max_{i \in I} \sup_{K \in \cK^i, \|K\|_{L^\infty}\le \mathfrak B  }\lVert \vartheta^{K,i} \rVert_{L^1},
  \end{equation}
where  
$\mu^{G,i} $ and $\vartheta^{K,i}$ are   defined by \eqref{eq:moments}.
Note that 
for the state process \eqref{eq:state_dynamics} controlled by an affine policy
 in $ \mathcal V_{\mathrm{aff}}^i  $,
the mean $\mu^{G,i}$ 
and the variance  
$\vartheta^{K,i}$
  depend  only on the intercept parameter $G^i$ and the slope parameter $K^i$ of the policy, respectively.

\begin{lemma} 
    \label{lemma:gradient_bound}
     For all $\mathfrak B>0$ and  $(K,G) \in \mathcal{K} \times \mathcal{G}$ with $\max_{i\in I}\|G^i\|_{L^2}\le \mathfrak B$,
    \begin{equation*}
      \lVert \nabla_{G} J(K,G) - \nabla_{G} \Phi^2(G)  \rVert_{L^2} \le 
      2 T \sqrt{N} C_Q M^\mu_{\mathrm{aff}}(\mathfrak B) ,
    \end{equation*}
    where $\nabla_G J(K,G) \coloneqq  (\nabla_{G^1} J^1(K,G), \dots, \nabla_{G^N} J^N(K,G))^\top$.
\end{lemma}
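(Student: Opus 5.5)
We will compute both gradients explicitly and compare them componentwise. Lemma \ref{lemma:cost_gradients} gives $(\nabla_{G^i} J^i(K,G))_t = 2(G^i_t + \Xi^{G,i}_t)$ with
\[
\Xi^{G,i}_t=\int_t^T [Q^i\mu_s]_i\,ds+\gamma^i(\mu^i_T-d^i).
\]
For $\Phi^2$ we will carry out the analogous first-variation computation. Since $\mu^{G,j}_t=\mathbb{E}[\xi^j]+\int_0^t G^j_s\,ds$, a perturbation of $G^i$ in direction $h$ changes only $\mu^i$, by $\int_0^t h_s\,ds$. Using Fubini we then expect
\[
(\nabla_{G^i}\Phi^2(G))_t = 2G^i_t + 2\int_t^T [Q_{\mathrm{sym}}\mu_s]_i\,ds + 2\gamma^i(\mu^i_T-d^i).
\]
The factor is $Q_{\mathrm{sym}}$ because $\partial_{\mu^i}(\mu^\top Q\mu)=2[Q_{\mathrm{sym}}\mu]_i$. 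The same computation for $J^{2,i}$, using $Q^i$ symmetric, reproduces Lemma \ref{lemma:cost_gradients}, so both formulas are on the same footing.

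Subtracting, the terms in $G^i$ and the terminal terms cancel, leaving
\[
(\nabla_{G^i}J^i-\nabla_{G^i}\Phi^2)_t=2\int_t^T\big([Q^i\mu_s]_i-[Q_{\mathrm{sym}}\mu_s]_i\big)\,ds .
\]
Next we write out the row. We have $[Q^i\mu]_i=\sum_j Q^i_{i,j}\mu^j$. From \eqref{eq:Q_matrix}, for $j\ne i$ the off-diagonal entry is $(Q_{\mathrm{sym}})_{i,j}=\tfrac12(Q^i_{i,j}+Q^j_{j,i})$. The diagonal entry is $Q^i_{i,i}+\tfrac12\sum_{k\ne i}(Q^i_{i,k}-Q^k_{k,i})$. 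The coefficient of $\mu^j$ ($j\neq i$) in the difference is therefore $\tfrac12(Q^i_{i,j}-Q^j_{j,i})$. The coefficient of $\mu^i$ is $-\tfrac12\sum_{k\ne i}(Q^i_{i,k}-Q^k_{k,i})$. Hence
\[
[Q^i\mu_s]_i-[Q_{\mathrm{sym}}\mu_s]_i=\tfrac12\sum_{j\ne i}(Q^i_{i,j}-Q^j_{j,i})(\mu^j_s-\mu^i_s),
\]
and this is the only place where asymmetry enters.

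It remains to bound the $L^2$ norm. Since every $\|G^j\|_{L^2}\le\mathfrak B$, we have $\|\mu^j\|_{L^2}\le M^\mu_{\mathrm{aff}}(\mathfrak B)$ for all $j$. By Cauchy--Schwarz, $|\int_t^T f_s\,ds|\le \sqrt T\|f\|_{L^2}$, so the $L^2([0,T])$ norm of $t\mapsto\int_t^T f$ is at most $T\|f\|_{L^2}$. Applying this to $f=\tfrac12\sum_{j\ne i}(Q^i_{i,j}-Q^j_{j,i})(\mu^j-\mu^i)$ and using the triangle inequality gives
\[
\|f\|_{L^2}\le \tfrac12\sum_{j\ne i}|Q^i_{i,j}-Q^j_{j,i}|\cdot 2M^\mu_{\mathrm{aff}}(\mathfrak B)\le C_Q M^\mu_{\mathrm{aff}}(\mathfrak B).
\]
Each component is thus bounded by $2TC_QM^\mu_{\mathrm{aff}}(\mathfrak B)$. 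Summing the squares over the $N$ players yields the factor $\sqrt N$, which gives the stated bound.

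The main obstacle is the correct gradient of $\Phi^2$, specifically the symmetrisation and the diagonal correction in $Q$. This must be done carefully because the cancellation that produces the differences $\mu^j-\mu^i$ relies precisely on the diagonal definition in \eqref{eq:Q_matrix}. Bounding $|\mu^j|$ and $|\mu^i|$ separately by the triangle inequality is enough, so no finer structure is needed.
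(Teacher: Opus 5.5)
Your proof is correct and follows the paper's argument. You use the same explicit gradients for $J^{2,i}$ and $\Phi^2$, and the same identity writing the discrepancy as $\int_t^T\sum_{j\neq i}(Q^i_{i,j}-Q^j_{j,i})(\mu^j_s-\mu^i_s)\,ds$. The only difference is in the last step: the paper collects these differences into a matrix $W$ and invokes the spectral-norm bound $\|W\|_2\le 2C_Q$, whereas you bound each player's row directly via the triangle inequality and $\|\mu^j\|_{L^2}\le M^\mu_{\mathrm{aff}}(\mathfrak B)$, which gives the same constant without needing column-sum control of $W$.
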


Using Lemmas 
\ref{lemma:K_gradient_bound}
and \ref{lemma:gradient_bound},
together with the structural properties of 
$\Phi^1$ and $\Phi^2$ established in Section~\ref{section:symmetric}, we show that
  the iterates  $ (K^{(\ell)},G^{(\ell)})_{\ell \in \mathbb{N}}$  produced by Algorithm~\ref{algo_projected} yield approximately optimal polices for the function $\Phi$ (Propositions \ref{proposition:convergence_alphapotential_slope} and \ref{proposition:convergence_alpha_potential_projection}). 
To show they are approximate NEs for the game, we introduce  the following policy class of player $i$: 
\begin{equation}
    \label{eq:policy_bdd_moments}
    \mathcal{V}_{\mathrm{b}}^i  \coloneqq 
    \left\{ u \in \cV^i 
    \,\middle\vert\, 
    \begin{aligned}
        &\lVert \mathbb{E}[X^{u,i}] \rVert_{L^2} \le M_b^\mu, \lVert \mathbb{V}[X^{u,i}] \rVert_{L^1} \le M_b^\vartheta, \\ 
        &\textnormal{$X^{u,i}$ satisfies \eqref{eq:state_dynamics} controlled by $u$}  
    \end{aligned} \right\}
\end{equation}
for some given $M_{\mathrm{b}}^\vartheta, M_{\mathrm{b}}^\mu > 0$,  
and define $\mathcal{V}_{\mathrm{b}}  \coloneqq \prod_{i\in I}\mathcal{V}_{\mathrm{b}}^i$.
 Standard moment estimates of \eqref{eq:state_dynamics} show that 
$\mathcal{V}_{\mathrm{b}} \subset \cV$ includes all  nonlinear  policies exhibiting appropriate linear growth.
To facilitate the analysis, 
we assume 
that the policy class $\mathcal V_{\mathrm{b}}$
is sufficiently large  as specified below.

\begin{assumption}
    \label{assumption:policy_class}
    The projection threshold  
 $\overline{C}_G$ in \eqref{eq:alphapot_G_update_projection}
 satisfies 
    $\overline{C}_G\ge \max_{i\in I}\lVert G^{\Phi,*,i} \rVert_{L^2}$, with
    $G^{\Phi,*}$   defined in \eqref{eq:optimal_policy}.
    The constants 
    $M^\mu_{\rm b}$ and $M^\vartheta_{\rm b}$ in \eqref{eq:policy_bdd_moments}
    satisfy 
    $M_{\mathrm{b}}^\mu \ge M^\mu_{\mathrm{aff}}(\overline{C}_G) $    and 
    $M_{\mathrm{b}}^\vartheta\ge M^\vartheta_{\mathrm{aff}}(\overline{C}_\infty^K )$, 
    where $\overline{C}_\infty^K>0$ is defined in \eqref{eq:C0K} and depends on
    $K^{(0)}$ in Algorithm \ref{algo_projected}.

\end{assumption}

Assumption \ref{assumption:policy_class}
ensures that the 
  policies generated by Algorithm~\ref{algo_projected} belong to  $\mathcal V_{\mathrm{b}}$.
  Indeed, for 
  all $\ell\in \sN$ and $i\in I$,
$\|K^{(\ell),i}\|_{L^\infty} \le \overline{C}_\infty^K$ due to Proposition~\ref{prop:uniform_bound_K},
   and  $ \|G^{(\ell),i}\|_{L^2} \le \overline{C}_G$ due to the explicit projection.  Hence, setting $\theta^{(\ell),i}=(K^{(\ell),i},G^{(\ell),i})$,  the affine structure of the policy $u_{\theta^{(\ell),i}}\in   \mathcal V_{\mathrm{aff}}^i $
   implies that the corresponding state satisfies the moment bounds in  \eqref{eq:policy_bdd_moments}.

The following theorem is analogous to Corollary~\ref{cor:approximate_NE_symmetric} and shows that Algorithm~\ref{algo_projected} yields approximate NEs for the game $\sG$ with asymmetric interactions. 

\begin{theorem}
\label{theorem:convergence_asymmetric_all}
    Suppose Assumptions \ref{assumption:Q_positivesemidef} and \ref{assumption:policy_class} hold. 
    Let $(K^{(0)}, G^{(0)}) \in L^\infty([0,T], \mathbb{R}^N) \times L^2([0,T], \mathbb{R}^N)$ with $\max_{i\in I}\|G^{(0),i}\|_{L^2}\le \overline {C}_G$. 
    Then 
    there exists $\bar{\eta} > 0$ 
    such that 
    if the learning rates satisfy 
      $\eta_K^i, \eta_G^i \in ( 0,\bar{\eta})$ for all $i \in I$,
      and $\eta_G^{\mathrm{min}} > \eta_G^{\mathrm{max}} / (1 + 2 \eta_G^{\mathrm{max}})$, the following holds: 
      for all $\varepsilon > 0$, there exists $M \in \mathbb{N}_0$, depending linearly on $\log{(1/\varepsilon)}$,  such that 
for all $\ell \ge M$,
the  policy profile $ u_{\theta^{(\ell)}}$, with $\theta^{(\ell)} = (K^{(\ell)}, G^{(\ell)})$ generated by Algorithm~\ref{algo_projected}, satisfies 
  \begin{equation*}
        J^i(u_{\theta^{(\ell)}}) \le  J^i(u^i, u_{\theta^{(\ell)}}^{-i}) +( \varepsilon + \delta(C_Q)),
        \quad \forall u^i \in \mathcal{V}_{\mathrm{b}}^i, \quad i \in I,
    \end{equation*}
 where the constant $\delta(C_Q) > 0$ is 
 defined in \eqref{eq:delta}.
That is, 
the  policy profile $ u_{\theta^{(\ell)}}$
is an $(\varepsilon + \delta(C_Q))$-NE 
 for the game $\sG = (I, (J^i)_{i \in I},  {\cV}_{\mathrm{b}})$.
  
\end{theorem}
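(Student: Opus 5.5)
The argument goes through the $\alpha$-potential function $\Phi$ on the bounded class $\mathcal V_{\mathrm b}$. By Proposition~\ref{prop:Phi_is_alpha_potential} applied with $\tilde{\mathcal V}=\mathcal V_{\mathrm b}$, $\Phi$ is an $\alpha_{\mathrm b}$-potential function with $\alpha_{\mathrm b}=(M_{\mathrm b}^\vartheta+3(M_{\mathrm b}^\mu)^2)C_Q$. By Assumption~\ref{assumption:policy_class}, Proposition~\ref{prop:uniform_bound_K} and the projection, every iterate $u_{\theta^{(\ell)}}$ lies in $\mathcal V_{\mathrm b}$. Hence for any $u^i\in\mathcal V_{\mathrm b}^i$,
\[
J^i(u_{\theta^{(\ell)}})-J^i(u^i,u^{-i}_{\theta^{(\ell)}})\le \Phi(u_{\theta^{(\ell)}})-\Phi(u^i,u^{-i}_{\theta^{(\ell)}})+\alpha_{\mathrm b}\le \Phi(\theta^{(\ell)})-\inf_{\mathcal V}\Phi+\alpha_{\mathrm b}.
\]
By Theorem~\ref{theorem:optimalpolicy}, whose first part uses only Assumption~\ref{assumption:Q_positivesemidef}, $\inf_{\mathcal V}\Phi=\Phi(u^*)=\Phi^1(K^{\Phi,*})+\Phi^2(G^{\Phi,*})$. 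Using Proposition~\ref{prop:potential_decomposition}, it therefore suffices to show that
\[
\Phi^1(K^{(\ell)})-\Phi^1(K^{\Phi,*})\le c_1(1-\rho_1)^\ell+b_1C_Q^2,\qquad \Phi^2(G^{(\ell)})-\Phi^2(G^{\Phi,*})\le c_2(1-\rho_2)^\ell+b_2C_Q^2.
\]
Then $\delta(C_Q)$ collects $\alpha_{\mathrm b}$ together with the $C_Q^2$ terms, and $M$ is taken of order $\log(1/\varepsilon)$.

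For the $K$-part, I would write the update as $K^{(\ell+1)}=K^{(\ell)}-\eta_K(\mathcal D^\Phi_{K^{(\ell)}}+e^{(\ell)})$, with $\|e^{(\ell)}\|_{L^2}\le c\,C_Q$ by Lemma~\ref{lemma:K_gradient_bound}, and use heterogeneous stepsizes componentwise. Because the iterates stay in the $L^\infty$ ball of radius $\overline C_\infty^K$, the variances $\vartheta^{K^{(\ell)},i}$ are bounded above and below uniformly. I would then prove a descent inequality for $\Phi^1$ along this normalized step, following the smoothness argument used for Theorem~\ref{theorem:convergence_potentialcase}, of the form
\[
\Phi^1(K^{(\ell+1)})\le\Phi^1(K^{(\ell)})-\tfrac{\eta_K^{\min}c'}{2}\|\mathcal D^\Phi_{K^{(\ell)}}\|^2+C\eta_K^{\max}\|e^{(\ell)}\|^2 .
\]
This inequality is valid for small $\bar\eta$. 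Combining it with the gradient dominance of Proposition~\ref{prop:graddom_K} gives a contraction with an additive $O(C_Q^2)$ bias, and iterating yields the claimed bound.

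For the $G$-part, I would use projected gradient descent with bias. Since $\overline C_G\ge\|G^{\Phi,*,i}\|$, the minimizer $G^{\Phi,*}$ lies in the feasible product of balls, so it is a fixed point of the projected step and nonexpansiveness applies. The error $\nabla_GJ-\nabla_G\Phi^2$ is bounded by $2T\sqrt N C_QM^\mu_{\mathrm{aff}}(\overline C_G)$ via Lemma~\ref{lemma:gradient_bound}. Strong convexity and smoothness from Proposition~\ref{prop:strongconvex_lipschitz_G} then give
\[
\|G^{(\ell+1)}-G^{\Phi,*}\|_{\eta}^2\le(1-\eta^{\min}_Gm)\|G^{(\ell)}-G^{\Phi,*}\|^2_\eta+O(C_Q^2),
\]
in a norm weighted by the stepsizes. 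The condition $\eta_G^{\min}>\eta_G^{\max}/(1+2\eta_G^{\max})$ is exactly what lets the heterogeneous-stepsize weighting close, namely $(1-2\eta^{\max})$-type factors compared against $\eta^{\min}/\eta^{\max}$. Lipschitz smoothness then converts distance into the $\Phi^2$ gap.

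The main obstacle is the $K$-analysis. The descent lemma for the normalized step requires local smoothness of $\Phi^1$ along the iterates, and this depends on the uniform bounds from Proposition~\ref{prop:uniform_bound_K}. Those bounds must themselves hold despite the bias. I would rely on the stated uniform bound $\overline C_\infty^K$ and derive the smoothness constants from Lipschitz stability of the Riccati-type ODE \eqref{eq:PK_ODE} and of the variance ODE \eqref{eq:moments} on that ball.
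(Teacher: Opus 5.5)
Your proposal is correct and matches the paper's proof step for step: the $\alpha$-potential reduction on $\mathcal V_{\mathrm b}$, the split into $\Phi^1$ and $\Phi^2$, a biased descent inequality combined with gradient dominance for $K$, and nonexpansive projected gradient descent with the bias bound of Lemma~\ref{lemma:gradient_bound} for $G$. On the $K$-part, the paper gets the descent inequality directly from the exact cost-difference identity of Lemma~\ref{lemma:cost_difference}, so no smoothness estimate is needed. Your worry about Proposition~\ref{prop:uniform_bound_K} is also unfounded: the bias does not affect it, because each player's $K$-update is an exact single-agent normalized gradient step on its own LQ problem.

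One correction to your commentary concerns the $G$-part. If you actually run the recursion in the weighted norm $\big(\sum_i (\eta_G^i)^{-1}\|G^i\|_{L^2}^2\big)^{1/2}$, the playerwise projection stays nonexpansive and the contraction factor is $1-\eta_G^{\mathrm{min}}m$ with no restriction on $\eta_G^{\mathrm{max}}/\eta_G^{\mathrm{min}}$. So the hypothesis $\eta_G^{\mathrm{min}}>\eta_G^{\mathrm{max}}/(1+2\eta_G^{\mathrm{max}})$ is not what closes your argument. The paper needs it only because it returns to the unweighted $L^2$ norm at every step, which costs a factor $\eta_G^{\mathrm{max}}/\eta_G^{\mathrm{min}}$ per iteration.
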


Theorem \ref{theorem:convergence_asymmetric_all}  provides a non-asymptotic performance guarantee for Algorithm~\ref{algo_projected}, without requiring the limit $N \to \infty$ or imposing an asymptotic behavior of the interaction matrices   $(Q^i)_{i \in I}$.
The constant $\delta(C_Q)$
is of   order  $N  C_Q^2  $,
which implies that  
Algorithm \ref{algo_projected} attains an approximate NE with   complexity scaling linearly in $N$, 
provided that 
as $N\to \infty$,
$\lvert Q_{i,j}^i - Q_{j,i}^j \rvert =o(1/N^r)$ 
for some $r> 1.5$.
This 
contrasts with (graphon) mean field game approximations of large-population games, which   require
all  interaction weights  $Q^i_{k,j}$, $i,k,j\in I$,  to vanish as $N \to \infty$.

\section{Numerical experiments}
\label{section:numerical}

This section evaluates the performance of Algorithm~\ref{algo} using the cost functional in Example~\ref{example:nr1}. Our experiments confirm that Algorithm~\ref{algo} exhibits robust linear convergence to the NE across both symmetric and asymmetric interaction networks.

\paragraph{Experiment setup.}
Consider the $N$-player LQ games $\sG$ 
with state \eqref{eq:state_dynamics} and cost  functional \eqref{eq:cost_crowd_motion_lq}.
We set $N=10$, $T=1$, and 
$d = (-4,-3,\cdots,4,5)^\top$.
For all $i\in I$,
we take 
$\sigma^i=0.25$, 
 $\gamma^i=1$, 
 and 
$\xi^i\sim \mathcal N( \mu^i_0, 0.01)$, 
where $\mu_0 = (5,4,\dots,-3,-4)^\top$.

  We implement Algorithm \ref{algo} by discretizing the state dynamics, cost functional, and policy class as in \cite{giegrich2024convergence, plank2025policy}. 
Consider $N_{\mathrm{t}}\in\mathbb{N}$
and 
a uniform time mesh $(t_j)_{j=0}^{N_{\mathrm{t}}} \subset [0,T]$ with $t_j = j\Delta t$,  where   $\Delta t= T/N_{\mathrm{t}}$.
We define piecewise-constant slope and intercept parameters 
$( K_{t_j}^i, G_{t_j}^i)_{0\le j\le  N_{\mathrm{t}}-1, i\in I }$
on this grid, which will be updated   using Algorithm \ref{algo}.
Given a piecewise-constant policy, 
we approximate 
the associated cost functional \eqref{eq:cost_crowd_motion_lq}  by
\begin{equation}
    \label{eq:cost_approx}
    \begin{split}
        \Hat{J}^i(K,G) \coloneqq \frac{1}{N_{\mathrm{s}}} &\sum_{k = 1}^{N_{\mathrm{s}}} \Bigg[ \sum_{j = 0}^{N_{\mathrm{t}} - 1} \bigg( \left( K_{t_j}^i (X_{t_j}^{i, (k)} - \Hat{\mu}_{t_j}^i) + G_{t_j}^i \right)^2 \\
        &\quad+ \sum_{\ell \in I\setminus \{i\}} \omega_{i,\ell} \left( X_{t_j}^{i, (k)} - X_{t_j}^{\ell, (k)} \right)^2 \bigg)\Delta t
        + \gamma^i \left(X_{T}^{i,(k)} - d^i \right)^2 \Bigg],
    \end{split}
\end{equation}
where 
for all $i\in I$,
$(X^{i,(k)}_{t_j})_{j=0}^{N_{\mathrm{t}}}$,
$k=1,\ldots, N_{\rm s}$,
are  sample  trajectories 
generated using an Euler-Maruyama discretization of
\eqref{eq:state_dynamics} on the time grid,  and $\hat{\mu}^i_{t_j}=\frac{1}{N_{\rm s}}\sum_{k=1}^{N_{\rm s}}X^{i,(k)}_{t_j}$.
 We set $N_{\mathrm{t}} = 200$ and $N_{\mathrm{s}} = 20{,}000$.
The interaction network $(\omega_{i,\ell})_{i,\ell\in I}$
will be specified below.

 We initialize policy  parameters  as $(K^{(0),i}, G^{(0),i}) \equiv 0$ for all $i \in I$. At each iteration $\ell\in \sN$, given the current parameters $(K^{(\ell),i}, G^{(\ell),i})_{i \in I}$, we update them by applying the gradient descent rule \eqref{eq:potential_GD_update} to the piecewise-constant policies:
  for all $i\in I$ and  $j=0,\ldots, N_{\rm t}-1$,
\begin{equation*}
    K_{t_j}^{(\ell + 1),i} = K_{t_j}^{(\ell),i} - \frac{\eta_K}{\Delta t \, \Hat{\vartheta}_{t_{j}}^{(\ell),i}} \widehat{\nabla_{K_{t_j}^i}} J^i(K^{(\ell)}, G^{(\ell)}), \quad   G_{t_j}^{(\ell + 1),i} = G_{t_j}^{(\ell),i} - \frac{\eta_G}{\Delta t} \widehat{\nabla_{G_{t_j}^i}} J^i(K^{(\ell)}, G^{(\ell)}),
\end{equation*}
where 
$\Hat{\vartheta}^{(\ell),i}_{t_j}$
is the empirical variance computed from the sample trajectories using the current parameters,
and 
 the gradients are
 obtained by applying automatic differentiation to 
\eqref{eq:cost_approx} in PyTorch.
 We  set the learning rates   
$\eta_K = \eta_G = 0.1$ and the number of iteration  to  $N_{\rm itr}=40$.

To quantify the convergence of
Algorithm \ref{algo},
we introduce the relative root mean squared error (RRMSE) of the policy parameter iterates with respect to the NE policy provided in   Theorem \ref{theorem:optimalpolicy_asymmetric}.
Specifically,
the RRMSE of the slope parameters
$K=(K^i)_{i\in I}$
is defined by
\begin{equation*}
  \mathrm{RRMSE}(K) \coloneqq \left(\sum_{i = 1}^{N} \sum_{j = 0}^{N_{\rm t}-1} (K_{t_j}^i - K_{t_j}^{i,*})^2 \right)^{1/2} \Bigg/\left(\sum_{i = 1}^{N} \sum_{j = 0}^{N_{\rm t}-1} (K_{t_j}^{i,*})^2 \right)^{1/2},
\end{equation*}
where the   
  reference  parameter $K^*$ is computed by solving the ODE system \eqref{eq:ODEs_asymmetric} using an explicit Runge-Kutta method on the time grid.  
The RRMSE of the intercept   $G$ is defined analogously.

\paragraph{Convergence with various networks.}

We first test Algorithm~\ref{algo} for symmetric interaction networks.
We run Algorithm~\ref{algo} over 10 independent trials, and in each trial construct   an  interaction graph using the randomly grown  uniform attachment model as in \cite[Example~11.39]{lovasz2012large}. Starting from a single node, at each iteration $n \leq N$ a new node is added, and every pair of previously non-adjacent nodes is connected independently with probability $1/n$. In particular, for all $i < j \leq n$, if $\omega_{i,j} = 0$ at the beginning of iteration $n$, then we set $\omega_{i,j} = 1$ with probability $1/n$ and $\omega_{i,j} = 0$ otherwise. Finally, we symmetrize the network by setting $\omega_{i,j} = \omega_{j,i}$. 

 Figure \ref{fig:num_experim_symmetric} exhibits the decay of the RRMSEs of policy parameters  with respect to the number of iterations,
where the solid line and the shaded area indicate the sample mean and the spread over 10 repeated experiments. 
Both policy parameters converge linearly to the NE, in agreement with Theorem~\ref{theorem:convergence_potentialcase}.
The seemingly larger noise at later iterations is due to the small magnitude of the errors, which makes fluctuations appear more pronounced on the logarithmic scale. The final error plateaus at a satisfactory level, influenced by the stochasticity in gradient estimation and the empirical approximation of the moments.

\begin{figure}[!ht]
    \centering

    \includegraphics[scale=0.48]{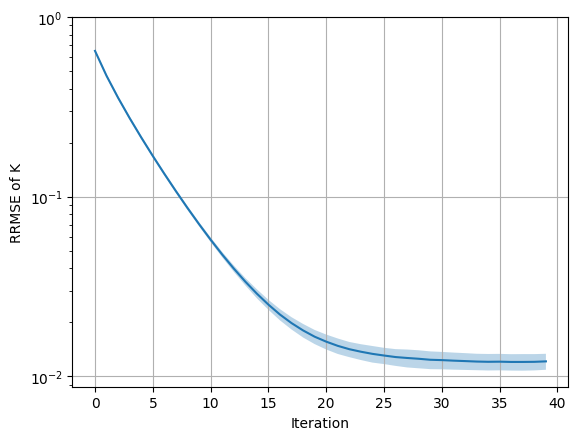}
    \quad 
    \includegraphics[scale=0.48]{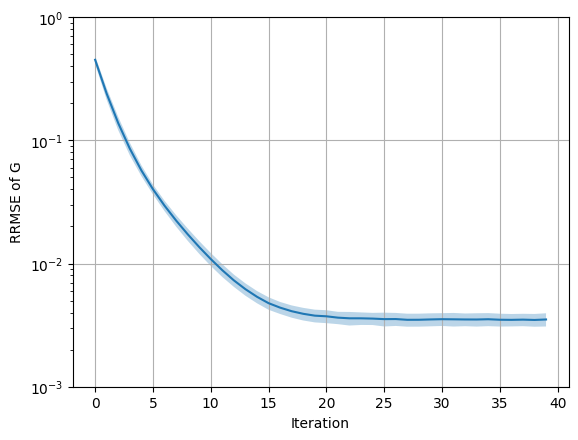}
   
    \caption{Convergence of Algorithm~\ref{algo} on symmetric uniform attachment networks. }
    \label{fig:num_experim_symmetric}
\end{figure}

We then test Algorithm~\ref{algo} on asymmetric interaction networks.
We perform 10 independent runs of Algorithm~\ref{algo}, and in each run generate a randomly sampled asymmetric Erd\H{o}s--R\'enyi interaction network: for all $i, j \in I$ with $i \neq j$, we sample $\omega_{i,j}$ from a Bernoulli distribution with parameter $p \in (0,1)$. That is, each directed edge $(i,j)$ is present independently with probability $p$.
Figure~\ref{fig:graphs} presents representative sampled networks for $p \in \{0.1, 0.5,  0.9\}$.

\begin{figure}[!ht]
    \centering

   \includegraphics[scale=0.33]{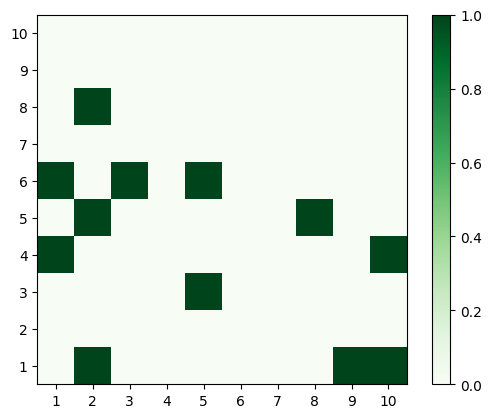}
   \quad 
   \includegraphics[scale=0.33]{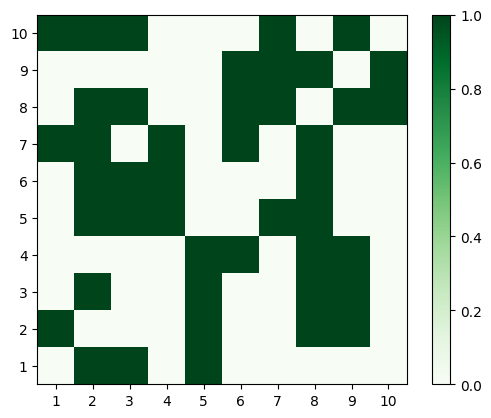}
   \quad
   \includegraphics[scale=0.33]{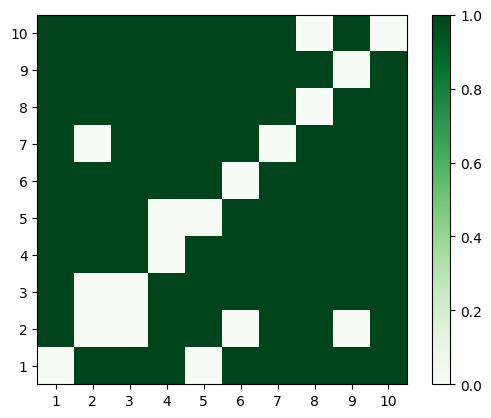}
  
    \caption{Asymmetric Erd\H{o}s--R\'enyi interaction networks with connection probability $p = 0.1$ (left), $p = 0.5$ (middle), and $p = 0.9$ (right).}
    \label{fig:graphs}
\end{figure}  

Figure~\ref{fig:num_experim_asymmetric} illustrates the performance of Algorithm~\ref{algo} on asymmetric Erd\H{o}s--R\'enyi networks with different connection probabilities. The results show that both parameters converge linearly to the NE policies until the error is dominated by the variance from Monte Carlo approximations of the policy gradients. The final accuracy remains stable across connection probabilities and is comparable to that for symmetric interaction networks in Figure~\ref{fig:num_experim_symmetric}, highlighting the robustness of Algorithm~\ref{algo}. 
This suggests that Theorem~\ref{theorem:convergence_asymmetric_all} provides a conservative bound on the algorithm  performance for games with  asymmetric interactions. 
Moreover,
it is evident from Figure~\ref{fig:num_experim_asymmetric} that the network structure has a greater impact on the convergence of the intercept parameter than on the slope parameter,
since the interaction matrices directly influence  both the policy gradient and the NE policy for 
$G$ (see Lemma \ref{lemma:cost_gradients} and Theorem \ref{theorem:optimalpolicy_asymmetric}).
Incorporating  additional structural properties of the interaction networks, such as sparsity, to better capture the algorithm’s behavior is an interesting direction for future work.

\begin{figure}[!ht]
    \centering
    
    \includegraphics[scale=0.48]{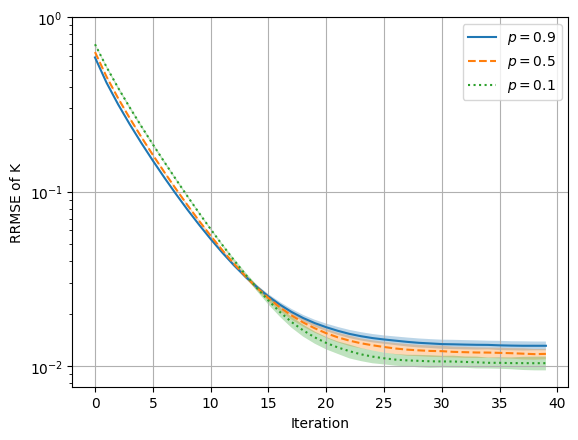}
    \quad 
    \includegraphics[scale=0.48]{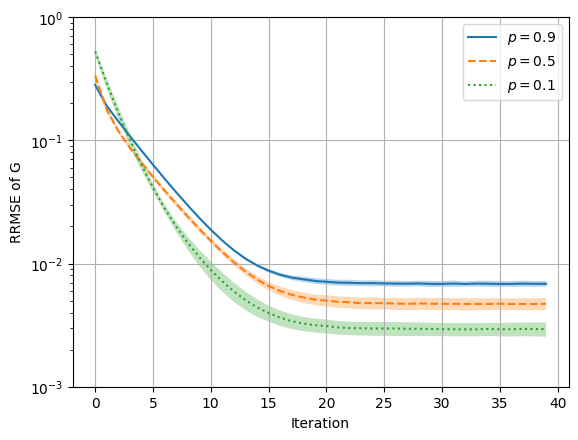} 
    \caption{Convergence of Algorithm \ref{algo}
    on asymmetric Erd\H{o}s--R\'enyi networks.
    }
    \label{fig:num_experim_asymmetric}
\end{figure}

\section{Proofs of Section \ref{section:distributed_equi}}

\begin{proof}[Proof of Proposition \ref{prop:Phi_is_alpha_potential}]
    For any $i \in I$, let $u^i,\tilde u^i\in \tilde{\cV}^i$ and $u^{-i}\in \tilde{\cV}^{-i}$. By the independence of the state processes, the cost and potential functions can be rewritten as 
    \begin{equation*}
        \begin{split}
            &J^i(u) = \int_0^T \bigg( \mathbb{E}[ \lvert u^i(t,X^i_t) \rvert^2] + \sum_{j = 1}^N \left( \mathbb{V}[X_t^j] Q_{j,j}^i \right) + (\mathbb{E}[X_t])^\top Q^i \mathbb{E}[X_t] \bigg) \, dt + \gamma^i \mathbb{E} [ \lvert X_T^i - d^i \rvert^2 ], \\
            &\Phi( u ) = \int_0^T \left( \sum_{j=1}^N \left( \mathbb{E}[\lvert u^j(t,X^j_t) \rvert^2] + \mathbb{V}[X_t^j] Q_{j,j} \right) + (\mathbb{E}[X_t])^\top Q \mathbb{E}[X_t] \right) \, dt + \sum_{i=1}^N\gamma^i \mathbb{E} [ \lvert X_T^i - d^i \rvert^2 ].
        \end{split}
    \end{equation*}
    This along with 
      the definition \eqref{eq:Q_matrix} of $Q$    shows that 
    \begin{equation*}
        \begin{split}
            &\lvert [\Phi(\Tilde{u}^i, u^{-i} ) - \Phi( u^i, u^{-i} ) ] - [J^i( \Tilde{u}^i, u^{-i} ) - J^i( u^i, u^{-i} )] \rvert \\
            &= \bigg\lvert \frac{1}{2} \int_0^T \Bigg( \left( \left( \mathbb{V}[X_t^i] - \mathbb{V}[\Tilde{X}_t^i] \right) + \left( (\mathbb{E}[X_t^i])^2 - (\mathbb{E}[\Tilde{X}_t^i])^2 \right) \right) \sum_{j \in I\setminus \{i\}} (Q_{i,j}^i - Q_{j,i}^j) \\
            &\qquad+ 2\left( \mathbb{E}[X_t^i] - \mathbb{E}[\Tilde{X}_t^i] \right) \sum_{j \in I\setminus \{i\}} \left( (Q_{j,i}^j - Q_{i,j}^i) \mathbb{E}[X_t^j] \right) \Bigg) \, dt \bigg \rvert \\
            &\le   \left(\max_{i \in I} \sup_{u^i \in \tilde{\cV}^i}  \|{\mathbb{V}[X^{u^i,i}]}\|_{L^1} + 3 \max_{i \in I} \sup_{u^i \in \tilde{\cV}^i}  {\|\mathbb{E}[ X^{u^i,i}]\|^2_{L^2}} \right) C_Q.
        \end{split}
    \end{equation*}
This completes the proof. 
\end{proof}

\section{Proofs of Section \ref{section:symmetric}}

\subsection{Proof of Theorem \ref{theorem:optimalpolicy}}

In the sequel, 
we denote by   $\mathcal{P}_2(E)$   the space of probability measures on
an Euclidean space 
$E$ with finite second moments. 
For each $\rho \in \mathcal{P}_2(\mathbb{R})$,
we write $\langle \rho, f(x) \rangle \coloneqq \int_{\mathbb{R}} f(x) \, \rho(d x)$ for the  integration of $f:\sR\to \sR$ with respect to $\rho$,
and for each  $\rho=\rho_1\otimes \cdots \otimes \rho_N \in \mathcal{P}_2(\mathbb{R})^N$, we write $\langle \rho, f (x)\rangle = (\langle \rho_i, f_i(x_i) \rangle )_{i=1}^N$ for the component-wise integration of $f:\sR^N\to \sR^N$. We denote by $\mathcal{D}_\rho$   the Lions derivative 
with respect to probability measures 
(see \cite[Chapter 5.2]{carmona_probabilistic_2018_I}). 

To minimize   the potential function $\Phi$ via a dynamic programming approach, 
we consider the following 
   lifted control problem:  
for each $t\in [0,T]$ and $\rho = \rho^1 \otimes \dots \otimes \rho^N \in   \cP_2(\sR)^N$,
\begin{equation*}
    \mathcal{W}(t, \rho) \coloneqq \inf_{u \in \mathcal{V}} \mathcal{J}(t, \rho, u),
\end{equation*}
where for each 
  $u=(u^i)_{i\in I}\in \cV$,
  $\mathcal{J}(t, \rho, u)$ is the 
  lifted cost functional given by 
\begin{equation*}
    \mathcal{J}(t,\rho,u) =  \int_t^T \left(  \langle \rho_s, |u(s,x)|^2\rangle + \langle \rho_s, x^\top Q x \rangle \right) \, ds + \langle \rho_T, (x - d)^\top \Lambda (x - d) \rangle.
\end{equation*}
where $\rho:[0,T]\to \cP_2(\sR)^N$
is the marginal laws of the state processes controlled by $u$: 
\begin{equation*}
    d X_s^{i} = u^i(s, X_s^{i}) \, ds + \sigma_s^i \, d B_s^{i}, \quad s \in{[t,T]}; \quad X^i_t \sim \rho^i.
\end{equation*}
By the verification theorem  \cite[Proposition 3.4]{jacksonlacker2025approximately},
suppose that  $\mathcal{U} \in \mathcal{C}^{1,2}([0,T] \times \mathcal{P}_2(\mathbb{R})^N, \mathbb{R})$ satisfies for all $(t,\rho)\in [0,T]\times \cP_2(\sR)^N$,
 \begin{align}
    \label{eq:value_PDE1}
    &- \frac{\partial \mathcal{U}}{\partial t} (t,\rho) - \frac{1}{2} \sum_{i=1}^N 
    (\sigma_t^i)^2
    \langle \rho^i, \frac{\partial}{\partial x} \mathcal{D}_{\rho^i} \mathcal{U}(t,\rho)(x) \rangle + \frac{1}{4} \sum_{i = 1}^N \langle \rho^i,(\mathcal{D}_{\rho^i} \mathcal{U}(t,\rho)(x))^2 \rangle = \langle \rho, x^\top Q x \rangle,
\end{align}
and 
$ \mathcal{U}(T, \rho) = \langle \rho, (x - d)^\top \Lambda (x - d) \rangle$.
Define 
$u^{*,i}(t,x) =-\frac{1}{2} \mathcal{D}_{\rho^i} \mathcal{U}(t, \rho^*_t)(x)$ for   all $i\in I$,
where $\rho^*_t =\mathcal L(X^*_t) $, and $X^*$ satisfies the following McKean-Vlasov SDE:  
\begin{equation*}
    d X_s^{i} =
    -\frac{1}{2} \mathcal{D}_{\rho^i} \mathcal{U}(s, \mathcal L(X_s))(X_s^{i} )\, ds + \sigma_s^i \, d B_s^{i}, \quad s \in{[t,T]}; \quad X^i_t \sim \rho^i.
\end{equation*}
Then $u^*=(u^{*,i})_{i\in I}$ is a minimizer of $\Phi:\cV\to \sR$. 

Now by \cite[Chapter 6, Theorem 7.2]{yong_stochastic_1999},
$\Lambda\in \sS^N_{\ge 0}$
and Assumption \ref{assumption:Q_positivesemidef},
\eqref{eq:P_symmetric} and \eqref{eq:Psi_symmetric}  have  
   unique solutions  $P^i\in \mathcal{C}([0,T], \mathbb{R})$ and 
     $\Psi \in \mathcal{C}([0,T], \mathbb{S}^{N}_{\ge 0})$, respectively.
     Since \eqref{eq:zeta_symmetric} is a linear ODE, 
     it has a unique solution  $\zeta^i\in \mathcal{C}([0,T], \mathbb{R})$. Hence the  system \eqref{eq:ODEs_valuefunction} is well-posed. 
For $i \in I$, let $\varphi^i\in \cC([0,T],\sR)$ satisfy 
    \begin{equation}
        \label{eq:varphi_in_proof}
        \frac{\partial \varphi_t^i}{\partial t} + (\sigma_t^i)^2 P_t^i - (\zeta_t^i)^2 = 0,
        \quad  t \in [0,T];
        \quad \varphi_T^i = (d^i)^2 \gamma^i,
    \end{equation}
 and define for all $(t,\rho) \in [0,T] \times \mathcal{P}_2(\mathbb{R})^N$,
    \begin{equation*}
        \mathcal{U}(t,\rho) \coloneqq \sum_{j=1}^N \left[\varphi_t^j + 2 \zeta_t^j \langle \rho^j, x \rangle + P_t^j (\langle \rho^j, x^2 \rangle - (\langle \rho^j, x \rangle)^2 ) + \sum_{k = 1}^N  \Psi_t^{j,k} \langle \rho^j, x \rangle \langle \rho^k , x \rangle   \right ].
    \end{equation*}
 We claim that $ \mathcal{U}$ satisfies \eqref{eq:value_PDE1}. Indeed, 
by the symmetry of $\Psi$,
    \begin{equation*}
        \begin{split}
            \mathcal{D}_{\rho^i} \mathcal{U}(t, \rho)(x) &= 2 P_t^i (x - \langle \rho^i, y \rangle) + \sum_{k=1}^N \left[ (\Psi_t^{i,k} + \Psi_t^{k,i}) \langle \rho^k, y \rangle \right] + 2 \zeta_t^i \\
            &= 2 P_t^i (x - \langle \rho^i, y \rangle) + 2 [\Psi_t \langle \rho, y \rangle]_i + 2 \zeta_t^i,
        \end{split}
    \end{equation*}
    where $[\cdot]_i$ denotes the $i$-th component of a vector, 
    and $\frac{\partial}{\partial x} \mathcal{D}_{\rho^i} \mathcal{U}(t, \rho)(x) = 2 P_t^i$. Thus 
    \begin{equation*}
        \frac{1}{4} \langle \rho^i, (\mathcal{D}_{\rho^i} \mathcal{U}(t, \rho)(x))^2 \rangle = (P_t^i)^2 ( \langle \rho^i, x^2 \rangle - (\langle \rho^i, x \rangle)^2) + ([\Psi_t \langle \rho, x \rangle]_i)^2 + 2 [\Psi_t \langle \rho, x \rangle]_i \zeta_t^i + (\zeta_t^i)^2.
    \end{equation*}
Using these   expressions, we see  \eqref{eq:value_PDE1} holds if and only if 
    \begin{equation}
        \label{eq:value_PDE2}
        \begin{split}
            \sum_{i = 1}^N \Bigg[ &-\frac{\partial P_t^i}{\partial t} (\langle \rho^i, x^2 \rangle - (\langle \rho^i, x \rangle)^2 ) - \sum_{j = 1}^N \left[ \frac{\partial \Psi_t^{i,j}}{\partial t} \langle \rho^i, x \rangle \langle \rho^j , x \rangle \right] - \frac{\partial \varphi_t^i}{\partial t}  - 2 \frac{\partial \zeta_t^i}{\partial t} \langle \rho^i, x \rangle \\
            &- (\sigma_t^i)^2 P_t^i + (P_t^i)^2 \left( \langle \rho^i, x^2 \rangle - ( \langle \rho^i, x \rangle )^2 \right) + \left([\Psi_t \langle \rho, x \rangle]_i \right)^2 + 2 \left(\sum_{j = 1}^N \zeta_t^j \Psi_t^{j,i} \right) \langle \rho^i, x \rangle  \\
            &+ (\zeta_t^i)^2 \Bigg] = \langle \rho, x^\top Q x \rangle
            =\sum_{i=1}^N \left[ Q_{i,i} \left(\langle \rho^i, x^2 \rangle - (\langle \rho^i, x \rangle )^2 \right) \right] + (\langle \rho, x \rangle)^\top Q \langle \rho, x \rangle,
        \end{split}
    \end{equation}
where the last identity used $\rho = \rho^1 \otimes \dots \otimes \rho^N$.
Since  $ 
        \sum_{i,j = 1}^N    \frac{\partial  \Psi_t^{i,j}}{\partial t} \langle \rho^i, x \rangle \langle \rho^j , x \rangle   = (\langle \rho, x \rangle)^\top \frac{\partial \Psi_t}{\partial t}  \langle \rho, x \rangle$ and $  \sum_{i = 1}^N \left([\Psi_t \langle \rho, y \rangle]_i \right)^2 = (\langle \rho, x \rangle )^\top \Psi_t^\top \Psi_t \langle \rho, x \rangle$, 
 \eqref{eq:value_PDE2} reduces  to
    \begin{equation*}
        \begin{split}
            &\sum_{i=1}^N \Bigg[ \left( -\frac{\partial P_t^i}{\partial t} + (P_t^i)^2 - Q_{i,i} \right) (\langle \rho^i, x^2 \rangle - (\langle \rho^i, x \rangle)^2 ) + 2 \left(-\frac{\partial \zeta_t^i}{\partial t} + \sum_{j = 1}^N \zeta_t^j \Psi_t^{j,i} \right) \langle \rho^i, x \rangle \\
            &+ \left( - \frac{\partial \varphi_t^i}{\partial t} - (\sigma_t^i)^2 P_t^i + (\zeta_t^i)^2 \right) \Bigg] + (\langle \rho, x \rangle)^\top \left( - \frac{\partial \Psi_t}{\partial t} + \Psi_t^\top \Psi_t -Q \right) \langle \rho, x \rangle = 0,
        \end{split}
    \end{equation*}
   which holds using the dynamics of  $(P^i)_{i \in I}, \Psi, (\zeta^i)_{i \in I}$ and $ (\varphi^i)_{i \in I}$. 
The fact that $\mathcal{U}$ satisfies the terminal condition can be verified using the terminal conditions of $(P^i)_{i \in I}, \Psi, (\zeta^i)_{i \in I}$ and $ (\varphi^i)_{i \in I}$. This completes the verification argument, and  proves that $u^*$  defined in \eqref{eq:optimal_policy} is the   minimizer of   $\Phi$ in $\mathcal{V}$, and, if Assumption \ref{assumption:Qij_symmetry} holds,   an NE of $\sG$.

\subsection{Proofs of Propositions 
\ref{prop:potential_decomposition}, 
\ref{prop:graddom_K}
and    \ref{prop:strongconvex_lipschitz_G}}

\begin{proof}[Proof of Proposition \ref{prop:potential_decomposition}]
    Note that for all $t \in [0,T]$,
    \begin{equation*}
        \begin{split}
            \sum_{i=1}^N \mathbb{E}[|K^i_t (X^i_t - \mu^i_t) + G^i_t|^2] &= G_t^\top G_t + \sum_{i=1}^N (K_t^i)^2 \vartheta_t^i, \\
            \mathbb{E}[X_t^\top Q X_t] &= \mathbb{E}[(X_t - \mu_t)^\top \mathrm{diag}( (Q_{i,i})_{i \in I} ) (X_t - \mu_t)] + \mu_t^\top Q \mu_t
            \\
            &=\sum_{i=1}^N Q_{i,i}\vartheta^i_t
            +\mu_t^\top Q \mu_t, \\
            \mathbb{E}[(X_T - d)^\top \Lambda (X_T - d)] &= \mathbb{E}[(X_T - \mu_T)^\top \Lambda (X_T - \mu_T)] + (\mu_T - d)^\top \Lambda (\mu_T - d)
            \\
            &=\sum_{i=1}^N \gamma^i\vartheta^i_T
            +(\mu_T - d)^\top \Lambda (\mu_T - d),
        \end{split}
    \end{equation*}
which proves the decomposition of the potential function. Similar arguments show the decomposition of the cost functionals.
The desired potential structures follow  from these decompositions and 
the fact that $\Phi$ is a potential function for  $\sG=(I,(J^i)_{i\in I}, \cV)$.
\end{proof}

Before proving Proposition \ref{prop:graddom_K}, we establish a cost difference lemma for the functional $\Phi^1$.

\begin{lemma}
    \label{lemma:cost_difference}
    For all  $K \in  \mathcal{K}$ and    $i \in I$, let $P^{\Phi,K,i}\in \cC([0,T],\sR)$ satisfy
    \begin{equation}
        \label{eq:ODE_P_Phi_K}
        \frac{\partial P_t}{\partial t} + 2 K_t^i P_t + (K_t^i)^2 + Q_{i,i} = 0, \quad t \in [0,T]; \quad P_T = \gamma^i.
    \end{equation}
  For all $K,\tilde K\in \cK$,
    \begin{equation*}
        \Phi^1(\Tilde{K}) - \Phi^1(K) = \sum_{i = 1}^N \int_0^T \left( 2 (P_t^{\Phi, K,i} + K_t^i) (\Tilde{K}_t^i - K_t^i)  {\vartheta}_t^{\Tilde K, i} + (\Tilde{K}_t^i - K_t^i)^2 {\vartheta}_t^{\Tilde K, i} \right) \, dt, 
    \end{equation*}
      where ${\vartheta}^{\Tilde K, i}$ is defined in \eqref{eq:moments}. 
    Moreover, $\mathcal{D}_K^{\Phi,i} = 2 ( P^{\Phi,K,i} + K^i )$, with  $\mathcal{D}_K^{\Phi,i}$   defined in \eqref{eq:DK}.
\end{lemma}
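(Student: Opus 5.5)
The proof is a direct computation that exploits the decoupled structure of $\Phi^1$. By \eqref{eq:potential_functions_decomp}, $\Phi^1(K)=\sum_{i}\Phi^{1,i}(K^i)$, where
\[
\Phi^{1,i}(K^i)=\int_0^T\big((K^i_t)^2+Q_{i,i}\big)\vartheta^{K,i}_t\,dt+\gamma^i\vartheta^{K,i}_T ,
\]
and $\vartheta^{K,i}$ depends only on $K^i$ through the linear ODE \eqref{eq:moments}. It therefore suffices to fix $i$ and prove the identity for each summand.

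The key step is a duality (integration by parts) argument that eliminates the terminal term. We take $P=P^{\Phi,K,i}$, the solution of the linear ODE \eqref{eq:ODE_P_Phi_K}, which exists uniquely since $K^i\in L^2$, and write $\tilde\vartheta=\vartheta^{\tilde K,i}$. We differentiate $t\mapsto P_t\tilde\vartheta_t$ using $\partial_t\tilde\vartheta=2\tilde K^i\tilde\vartheta+(\sigma^i)^2$ and \eqref{eq:ODE_P_Phi_K}, and integrate from $0$ to $T$:
\[
\gamma^i\tilde\vartheta_T-P_0\mathbb V[\xi^i]=\int_0^T\Big(\big(2\tilde K^i_t-2K^i_t\big)P_t\tilde\vartheta_t-\big((K^i_t)^2+Q_{i,i}\big)\tilde\vartheta_t+P_t(\sigma^i_t)^2\Big)dt .
\]
Substituting this expression for $\gamma^i\tilde\vartheta_T$ into $\Phi^{1,i}(\tilde K^i)$ cancels the $Q_{i,i}$ terms and gives
\[
\Phi^{1,i}(\tilde K^i)=P_0\mathbb V[\xi^i]+\int_0^TP_t(\sigma^i_t)^2dt+\int_0^T\Big((\tilde K^i_t)^2-(K^i_t)^2+2(\tilde K^i_t-K^i_t)P_t\Big)\tilde\vartheta_t\,dt .
\]
Setting $\tilde K=K$ shows that $\Phi^{1,i}(K^i)$ equals the first two terms. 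Subtracting, and using the algebraic identity
\[
\tilde k^2-k^2+2(\tilde k-k)p=2(p+k)(\tilde k-k)+(\tilde k-k)^2,
\]
yields exactly the claimed cost-difference formula after summing over $i$.

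The gradient claim follows from the formula. We take $\tilde K=K+\epsilon h e_i$ with $h\in L^2$ (bounded $h$ first, then density if needed). Then
\[
\Phi^1(\tilde K)-\Phi^1(K)=\epsilon\int_0^T2(P_t+K^i_t)h_t\,\tilde\vartheta_t\,dt+\epsilon^2\int_0^Th_t^2\,\tilde\vartheta_t\,dt .
\]
Since $\tilde\vartheta\to\vartheta^{K,i}$ uniformly as $\epsilon\to0$, by continuous dependence of the linear ODE \eqref{eq:moments} on its coefficient in $L^1$, the Fréchet derivative is
\[
(\nabla_{K^i}\Phi^1(K))_t=2(P_t+K^i_t)\vartheta^{K,i}_t .
\]
Dividing by $\vartheta^{K,i}_t$, which is positive because $\mathbb V[\xi^i]>0$ and by the explicit exponential solution formula, gives $\mathcal D^{\Phi,i}_K=2(P^{\Phi,K,i}+K^i)$.

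The main obstacle is the regularity bookkeeping rather than the algebra. With $K\in L^2$ (not $L^\infty$), the ODEs must be understood in the Carathéodory sense. The product rule for $P\tilde\vartheta$ must hold for absolutely continuous functions. The quadratic remainder must be $o(\epsilon)$ in the $L^2$ operator sense, which requires a uniform bound on $\tilde\vartheta$ over $\epsilon$ near $0$; this follows from Grönwall's inequality and $\|\tilde K^i\|_{L^1}$ being bounded. I would state these facts briefly and keep the focus on the integration-by-parts identity.
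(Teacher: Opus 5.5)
Your proposal is correct and follows essentially the route the paper takes by citation to \cite[Lemmas 3.2 and 3.3]{giegrich2024convergence} and \cite[Lemmas 4.2 and 4.3]{plank2025policy}. As there, you pair the ODE for $P^{\Phi,K,i}$ with the variance ODE for $\vartheta^{\tilde K,i}$ through the product rule for $P\tilde\vartheta$, then read off $\mathcal{D}^{\Phi,i}_K=2(P^{\Phi,K,i}+K^i)$ from the first-order term; your algebra checks out, and your regularity remarks supply details the paper leaves implicit (Carath\'eodory solutions for $K\in L^2$, and uniform Lipschitz dependence of $\tilde\vartheta$ on $\|\tilde K^i-K^i\|_{L^1}\le\sqrt{T}\,\|\tilde K^i-K^i\|_{L^2}$, which upgrades the directional derivative to a Fr\'echet derivative).
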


\begin{proof}
 The cost difference of $\Phi^1$ follows from similar arguments as those for \cite[Lemmas 3.2 and 3.3]{giegrich2024convergence} and \cite[Lemma 4.2]{plank2025policy}.
 The characterization of $\cD^{\Phi,i}_K$ follows from calculations analogous to those in \cite[Lemma 4.3]{plank2025policy}.
\end{proof}

Using Lemma \ref{lemma:cost_difference}, we  now prove Proposition \ref{prop:graddom_K}. 

\begin{proof}[Proof of Proposition \ref{prop:graddom_K}]

Let  $K^{\Phi,*} = (K^{\Phi,*,i})_{i \in I}$  be  $ K^{\Phi,*,i}\coloneqq - P^i $, where $P^i$ satisfies \eqref{eq:P_symmetric}.
The equation  \eqref{eq:P_symmetric} implies that 
  $P^{\Phi,K^{\Phi,*},i} = P^{i}$, where $P^{\Phi,K^{\Phi,*},i}$ is defined by  \eqref{eq:ODE_P_Phi_K} (with  $K^i=K^{\Phi,*,i}$).
   By Lemma \ref{lemma:cost_difference}, for any $K \in \mathcal{K}$,
    \begin{equation*}
        \Phi^1(K) - \Phi^1(K^{\Phi,*}) = \sum_{i = 1}^N \left[ \int_0^T \left( (K_t^i + P_t^i)^2 \vartheta_t^{K,i} \right) \, dt \right] \ge 0,
    \end{equation*}
    where the equality holds if and only if $K^i = - P^i$ for all ${i \in I}$. Therefore, $K^{\Phi,*}$ is the minimizer of $\Phi^1 : \mathcal{K} \to \mathbb{R}$. The gradient dominance of $\Phi^1$ follows from Lemma \ref{lemma:cost_difference} and a completion-of-squares argument,
    similar to \cite[Lemma~4.5]{plank2025policy}.
\end{proof}

We now state a uniform bound
of the iterates $(K^{(\ell),i})_{\ell \in \mathbb{N}_0, i \in I}$ 
produced by Algorithm \ref{algo}. 
Note that this uniform bound is a property of the gradient update  \eqref{eq:potential_GD_update} for $K$ and holds independently of Assumption \ref{assumption:Qij_symmetry}.

\begin{proposition}
    \label{prop:uniform_bound_K}
    Let $K^{(0)} \in L^\infty([0,T], \mathbb{R}^N)$, 
    $\eta_K^i \in (0, 1/2)$ for all $i\in I$,
    and 
      $(K^{(\ell)})_{\ell \in \mathbb{N}_0}$ be  defined  by 
 \eqref{eq:potential_GD_update}.  
 For all $\ell\in \sN$, 
 let      $P^{(\ell),i} = P^{K^{(\ell)},i}$ be defined by  \eqref{eq:PK_ODE} (with    $K^i = K^{(\ell),i}$).
 Then for all $i \in I$ and  $\ell \in \mathbb{N}_0$, $\lVert K^{(\ell),i} \rVert_{L^\infty} \le \overline{C}_\infty^K$ and $\lVert P^{(\ell),i} \rVert_{L^\infty} \le \overline{C}_\infty^K$, where $\overline{C}_\infty^K>0$ is given by
    \begin{equation}
        \label{eq:C0K}
        \overline{C}_\infty^K \coloneqq \max_{i \in I}{\left(\lVert K^{(0),i} \rVert_{L^\infty} + \lVert P^{(0),i} \rVert_{L^\infty} \right)}.
    \end{equation}

Consequently, 
there exist constants $M^\vartheta, M_\vartheta > 0$ such that  for all $i \in I$, $\ell\in \sN_0$ and $t \in [0,T]$,
$\vartheta_t^{(\ell),i} \in [M_\vartheta, M^\vartheta]$,
where 
$\vartheta^{(\ell),i} = \vartheta^{K^{(\ell)},i}$ satisfies \eqref{eq:moments} (with $K^i = K^{(\ell),i}$).

\end{proposition}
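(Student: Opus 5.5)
The plan is an induction on $\ell$ showing the invariant $|K^{(\ell),i}_t|+|P^{(\ell),i}_t|\le \overline{C}_\infty^K$ for a.e.\ $t$ and every $i\in I$. The base case $\ell=0$ holds by the definition \eqref{eq:C0K}.

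First I would rewrite the update. By Lemma \ref{lemma:cost_gradients},
\[
(\nabla_{K^i}J^i(\theta^{(\ell)}))_t(\vartheta^{(\ell),i}_t)^{-1}=2\bigl(P^{(\ell),i}_t+K^{(\ell),i}_t\bigr).
\]
The variance factor cancels, and the update in \eqref{eq:potential_GD_update} becomes
\[
K^{(\ell+1),i}=(1-2\eta_K^i)K^{(\ell),i}-2\eta_K^i P^{(\ell),i}.
\]
This is a convex combination of $K^{(\ell),i}$ and $-P^{(\ell),i}$, because $\eta^i_K<1/2$. Note that it involves only player $i$'s own $P^{K,i}$ from \eqref{eq:PK_ODE}, which depends on $Q^i_{i,i}$ alone. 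So no symmetry assumption enters. The same update also appears in \eqref{eq:alphapot_G_update_projection}.

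The key step is to compare $P^{(\ell+1),i}$ with $P^{(\ell),i}$. Write $K'=K^{(\ell+1),i}$, $K=K^{(\ell),i}$, $P=P^{(\ell),i}$ and $P'=P^{(\ell+1),i}$. Subtracting the two Riccati-type equations \eqref{eq:PK_ODE} gives
\[
\frac{d}{dt}(P'-P)+2K'(P'-P)+(K'-K)^2+2(K'-K)(K+P)=0,\qquad (P'-P)_T=0.
\]
Substituting $K'-K=-2\eta(K+P)$ shows that the source term equals $4\eta(\eta-1)(K+P)^2\le 0$. By the variation-of-constants formula, run backward from $T$, this gives $P'\le P$. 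This is the standard monotone-improvement argument of \cite{giegrich2024convergence}. For a lower bound, I would use the fact that $P'$ is the cost-to-go of a variance functional with nonnegative running cost $(K')^2+Q^i_{i,i}$ and terminal value $\gamma^i\ge0$. Hence $P'\ge0$, as the representation $P'_t=\int_t^T e^{\int_t^s2K'}((K')^2+Q^i_{i,i})\,ds+e^{\int_t^T2K'}\gamma^i$ shows. Together these give $0\le P^{(\ell+1),i}\le P^{(\ell),i}$.

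Next I close the induction. From the convex-combination form and $P\ge0$,
\[
|K'|\le(1-2\eta)|K|+2\eta|P|\le \max(|K|,|P|).
\]
Also $|P'|\le|P|$. This alone does not give a fixed bound on $|K'|+|P'|$, so I would instead keep the separate invariants $\|K^{(\ell),i}\|_{L^\infty}\le\overline{C}_\infty^K$ and $0\le P^{(\ell),i}\le P^{(0),i}\le\overline{C}_\infty^K$. These propagate, since $|K'|\le\max(|K|,|P|)\le\overline{C}_\infty^K$. That is exactly the claimed statement, which bounds each norm separately.

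Finally, the variance bounds follow from \eqref{eq:moments}:
\[
\vartheta^{(\ell),i}_t=e^{2\int_0^tK}\mathbb V[\xi^i]+\int_0^te^{2\int_s^tK}(\sigma^i_s)^2ds.
\]
Using $|K|\le\overline{C}_\infty^K$, I can take $M_\vartheta=\min_i\mathbb V[\xi^i]e^{-2\overline{C}_\infty^KT}>0$ and $M^\vartheta=e^{2\overline{C}_\infty^KT}\bigl(\max_i\mathbb V[\xi^i]+T\max_i\|\sigma^i\|_{L^\infty}^2\bigr)$. I expect the main obstacle to be the sign computation in the comparison step, together with the requirement that $P\ge0$ in order to control $|K'|$.
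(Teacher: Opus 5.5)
Your proposal is correct and takes essentially the same route as the paper, which defers to the single-agent argument of \cite[Proposition 3.5(2)]{giegrich2024convergence}: the normalized update becomes the convex combination $K^{(\ell+1),i}=(1-2\eta_K^i)K^{(\ell),i}-2\eta_K^i P^{(\ell),i}$, a cost-difference comparison of the $P$-equations gives $0\le P^{(\ell+1),i}\le P^{(\ell),i}$, and the variance bounds then follow from \eqref{eq:moments} by Gr\"onwall's inequality (variation of constants). Switching to separate invariants is the right fix for your initial $|K|+|P|$ idea, and it actually gives the slightly sharper bound $\|K^{(\ell),i}\|_{L^\infty}\le\max(\|K^{(0),i}\|_{L^\infty},\|P^{(0),i}\|_{L^\infty})$.
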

\begin{proof}
Since      the update  \eqref{eq:potential_GD_update} of $ K^{(\ell),i}$ depends only on player $i$'s coefficients, 
    the uniform bounds of $ K^{(\ell),i}$ and 
    $ P^{(\ell),i}$
    can be obtained 
    by adapting the arguments for single-player control problems in  \cite[Proposition 3.5(2)]{giegrich2024convergence}
    to the present setting. 
    The uniform bound of $\vartheta^{(\ell),i}$ 
    follows from 
    \eqref{eq:moments}
    and Gr\"onwall's inequality.
\end{proof}

Finally, we prove the strong convexity and Lipschitz smoothness of $\Phi^2$.
 \begin{proof}[Proof of Proposition \ref{prop:strongconvex_lipschitz_G}]

Define the linear   operator $\Gamma: L^2([0,T], \mathbb{R}^N) \to \cC([0,T], \mathbb{R}^N)$
such that $\Gamma[f](t) \coloneqq \int_0^t f(s) \, ds$ for all   $f \in L^2([0,T], \mathbb{R}^N)$.  By \eqref{eq:moments}, 
  for all $G \in \mathcal{G}$, $\mu^G = \mathbb{E}[\xi] + \Gamma[G]$, and  
\begin{equation*}
    \begin{split}
        \Phi^2(G) &= \int_0^T \left( G_t^\top G_t + (\mathbb{E}[\xi] + \Gamma[G](t))^\top Q (\mathbb{E}[\xi] + \Gamma[G](t))  \right) \, dt \\
        &\quad+ (\mathbb{E}[\xi] + \Gamma[G](T) - d)^\top \Lambda (\mathbb{E}[\xi] + \Gamma[G](T) - d).
    \end{split}
\end{equation*}
It is easy to show that 
the second derivative $\Phi^2$ satisfies for all $G,G_1,G_2\in \cG$,
    \begin{equation*}
        \nabla_G^2 \Phi^2(G)[G_1, G_2] = 2 \left( \langle G_1, G_2 \rangle_{L^2} + \frac{1}{2} \langle \Gamma[G_1], (Q + Q^\top) \Gamma[G_2] \rangle_{L^2} + \langle \Gamma[G_1](T), \Lambda \Gamma[G_2](T) \rangle \right).
    \end{equation*}
  
  By   Assumption \ref{assumption:Q_positivesemidef} and $\gamma^i \ge 0$ for all $i \in I$,   
$  \nabla_G^2 \Phi^2(G)[G, G] \ge 2 \lVert G \rVert_{L^2}^2 = m \lVert G \rVert_{L^2}^2
$       with $m\coloneqq 2$, 
 which implies the     
    strong convexity of $\Phi^2$.
   Moreover, the following estimates of the operator norms  
   $\lVert \Gamma \rVert_{\mathrm{op}, L^2\to L^2} \le T / \sqrt{2}$ 
and  $\lVert \Gamma(T)\rVert_{\mathrm{op}, L^2\to \sR^N} =\sqrt T$ imply that 
    \begin{equation*}
        \nabla_G^2 \Phi^2(G)[G, G] \le ( 2 + T^2 \lambda_{\mathrm{max}}(Q_{\mathrm{sym}}) + 2 T \max_{i \in I} \gamma^i ) \lVert G\rVert_{L^2}^2.
    \end{equation*}
    The Taylor expansion and the quadratic structure of $G \mapsto \Phi^2(G)$ then show that 
    \begin{equation*}
        \Phi^2(\Tilde{G}) - \Phi^2(G) \le \langle \Tilde{G} - G, \nabla_G \Phi^2(G) \rangle_{L^2} + \frac{L}{2} \lVert \Tilde{G} - G \rVert_{L^2}^2,
    \end{equation*}
    where $L \coloneqq 2 + T^2 \lambda_{\mathrm{max}}(Q_{\mathrm{sym}}) + 2 T \max_{i \in I} \gamma^i$.
    This completes the proof. 
\end{proof}

\subsection{Proofs of Theorem \ref{theorem:convergence_potentialcase} and Corollary \ref{cor:approximate_NE_symmetric}}

To simplify the notation,  we write   
  $\eta_{K}^\mathrm{min} = \min_{i \in I} \eta_K^i$, $\eta_{K}^\mathrm{max} = \max_{i \in I} \eta_K^i$,  $\eta_{G}^\mathrm{min} = \min_{i \in I} \eta_G^i$, $\eta_{G}^\mathrm{max} = \max_{i \in I} \eta_G^i.$

We first prove a descent lemma for the function $\Phi^1$. It is essential to impose Assumption \ref{assumption:Qij_symmetry}, which ensures that the gradient of the cost functionals coincides with that of $\Phi^1$. 

\begin{lemma}
    \label{lemma:desc_K}
    Suppose Assumption \ref{assumption:Qij_symmetry} holds. 
    Let $M_\vartheta, M^\vartheta>0$ be the constants from Proposition \ref{prop:uniform_bound_K}, and  $(K^{(\ell)})_{\ell\in \sN_0}$  be defined via \eqref{eq:potential_GD_update}.  If   $\eta_K^i \in (0,M_\vartheta/(2M^\vartheta))$ for all $ i \in I$, then for all $\ell\in \sN_0$,
    \begin{equation*}
        \Phi^1(K^{(\ell + 1)}) - \Phi^1(K^{(\ell)}) \le - \eta_K^{\mathrm{min}} (M_\vartheta - \eta_K^{\mathrm{max}} M^\vartheta) \lVert \mathcal{D}_K^{\Phi,(\ell)} \rVert_{L^2}^2,
    \end{equation*}
    where $\mathcal{D}_{K}^{\Phi,(\ell)} = \mathcal{D}_{K^{(\ell)}}^\Phi$ is defined in \eqref{eq:DK}.
\end{lemma}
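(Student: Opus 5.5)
We plan to combine the cost difference identity of Lemma~\ref{lemma:cost_difference} with the uniform variance bounds of Proposition~\ref{prop:uniform_bound_K}. Under Assumption~\ref{assumption:Qij_symmetry}, Proposition~\ref{prop:potential_decomposition} gives $\nabla_{K^i}J^i(\theta^{(\ell)})=\nabla_{K^i}\Phi^1(K^{(\ell)})$. The update \eqref{eq:potential_GD_update} therefore reads
\[
K^{(\ell+1),i}-K^{(\ell),i}=-\eta_K^i(\mathcal D_K^{\Phi,(\ell)})^i=-2\eta_K^i\big(P^{\Phi,K^{(\ell)},i}+K^{(\ell),i}\big),
\]
where the last identity is the characterization of $\mathcal D^{\Phi,i}_K$ in Lemma~\ref{lemma:cost_difference}. (Under the symmetry assumption $Q_{i,i}$ and $Q^i_{i,i}$ coincide, so $P^{\Phi,K,i}=P^{K,i}$.)

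Next we apply Lemma~\ref{lemma:cost_difference} with $K=K^{(\ell)}$ and $\tilde K=K^{(\ell+1)}$. Write $D^i_t\coloneqq(\mathcal D_K^{\Phi,(\ell)})^i_t=2(P^{\Phi,K^{(\ell)},i}_t+K^{(\ell),i}_t)$, so that $\tilde K^i-K^i=-\eta_K^iD^i$. Substituting gives
\[
\Phi^1(K^{(\ell+1)})-\Phi^1(K^{(\ell)})=\sum_{i=1}^N\int_0^T\Big(-\eta_K^i (D_t^i)^2+(\eta_K^i)^2(D^i_t)^2\Big)\vartheta_t^{(\ell+1),i}\,dt .
\]
The cross term becomes $2(P+K)(-\eta D)\vartheta=-\eta D^2\vartheta$, because $2(P+K)=D$.

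It remains to bound the variances. By Proposition~\ref{prop:uniform_bound_K}, applied at iterate $\ell+1$ (its hypotheses hold since $\eta_K^i<M_\vartheta/(2M^\vartheta)\le 1/2$), we have $\vartheta^{(\ell+1),i}_t\in[M_\vartheta,M^\vartheta]$. Hence
\[
\big(-\eta_K^i+(\eta_K^i)^2\big)\vartheta_t^{(\ell+1),i}\le-\eta_K^iM_\vartheta+(\eta_K^i)^2M^\vartheta=-\eta_K^i\big(M_\vartheta-\eta_K^iM^\vartheta\big).
\]
The step-size condition makes $M_\vartheta-\eta_K^iM^\vartheta\ge M_\vartheta-\eta_K^{\max}M^\vartheta>M_\vartheta/2>0$. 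Consequently $-\eta_K^i(M_\vartheta-\eta_K^iM^\vartheta)\le-\eta_K^{\min}(M_\vartheta-\eta_K^{\max}M^\vartheta)$. Summing over $i$ and integrating in $t$ gives the claim with $\|\mathcal D_K^{\Phi,(\ell)}\|_{L^2}^2=\sum_i\|D^i\|_{L^2}^2$.

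The main obstacle is a subtlety in the bounds rather than the algebra. The cost difference identity weights by the variance at the new iterate $\tilde K$, not the old one, so we need the uniform variance bounds of Proposition~\ref{prop:uniform_bound_K} to hold for every iterate along the trajectory, including $K^{(\ell+1)}$. That proposition is stated for all $\ell\in\mathbb N_0$ under $\eta_K^i<1/2$, which our step-size condition implies, so it suffices.
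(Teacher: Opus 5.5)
Your proposal is correct and follows the paper's proof essentially verbatim. Like the paper, you use Proposition~\ref{prop:potential_decomposition} to identify the update with $-\eta_K^i\mathcal{D}_K^{\Phi,(\ell),i}$, substitute it into the cost-difference identity of Lemma~\ref{lemma:cost_difference} with $\tilde K=K^{(\ell+1)}$, and bound the new-iterate variances $\vartheta^{(\ell+1),i}$ within $[M_\vartheta,M^\vartheta]$ via Proposition~\ref{prop:uniform_bound_K}.
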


\begin{proof}
    Under Assumption \ref{assumption:Qij_symmetry}, for all $i \in I$, $\nabla_{K^i} J^{i}( K, G ) =  \nabla_{K^i} \Phi^{1}( K )$  due to Proposition \ref{prop:potential_decomposition}, hence   $K^{(\ell+1),i} - K^{(\ell),i} = -\eta_K^i \mathcal{D}_{K}^{\Phi,(\ell),i}$. Applying Lemma \ref{lemma:cost_difference} with $K = K^{(\ell)}, \Tilde{K} = K^{(\ell+1)}$ and using the bounds from Proposition \ref{prop:uniform_bound_K}  for $\eta_K^{\mathrm{max}} \in (0, M_\vartheta/(2M^\vartheta))$ yields
    \begin{equation*}
        \begin{split}
            \Phi^1(K^{(\ell + 1)}) - \Phi^1(K^{(\ell)}) &= \sum_{i = 1}^N \int_0^T \left( -\eta_K^i (\vartheta_t^{(\ell + 1),i} - \eta_K^i \vartheta_t^{(\ell + 1),i}) \lvert \mathcal{D}_{K,t}^{\Phi, (\ell),i} \rvert^2 \right) \, dt \\
            &\le  - \eta_K^{\mathrm{min}} (M_\vartheta - \eta_K^{\mathrm{max}} M^\vartheta) \lVert \mathcal{D}_K^{\Phi,(\ell)} \rVert_{L^2}^2.
        \end{split}
    \end{equation*}
    This completes the proof.
\end{proof}

\begin{proof}[Proof of Theorem \ref{theorem:convergence_potentialcase}]
To simplify the notation, 
write $(K^*, G^*)=( K^{\Phi,*}, G^{\Phi,*})$  
defined in Theorem \ref{theorem:optimalpolicy}.
    Define the constants $C_1^K, C_2^K, C_3^K > 0$ by
    \begin{equation}
    \label{eq:CK123}
        C_1^K \coloneqq \frac{M_\vartheta}{2 M^\vartheta}\le  \frac{1}{2}, \quad C_2^K \coloneqq \frac{1}{M_\vartheta}, \quad C_3^K \coloneqq \frac{2 M_\vartheta}{M^{\vartheta,*}},
    \end{equation}
    where $M_\vartheta, M^\vartheta>0$ are the constants from Proposition \ref{prop:uniform_bound_K}   and $M^{\vartheta,*} \coloneqq \sup_{i\in I, t\in [0,T]}\vartheta_t^{K^*,i}$. Note that under Assumption \ref{assumption:Qij_symmetry}, $M_{\vartheta}^* \coloneqq \inf_{i\in I, t\in [0,T]} \vartheta_t^{K^*,i}$ satisfies $M_\vartheta \le M_\vartheta^*$ as $\lVert K^* \rVert_{L^\infty} \le \overline{C}_\infty^K$ using a similar argument as in Proposition \ref{prop:uniform_bound_K}. Hence, $C_1^K \le 1/C_3^K$.  By  Proposition \ref{prop:graddom_K} and Lemma  \ref{lemma:desc_K},
    if $\eta_K^{\mathrm{max}} \le C^K_1$,
  \begin{equation}
  \label{eq:thm_K_conv_argument}
        \begin{split}
            \Phi^1(K^{(\ell +1)}) - \Phi^1(K^{*}) 
            &= \Phi^1(K^{(\ell +1)}) -\Phi^1(K^{(\ell )})+\Phi^1(K^{(\ell )}) -  \Phi^1(K^{*}) 
            \\
            &\le 
            \left(1- 
            \eta_K^{\mathrm{min}} (M_\vartheta - \eta_K^{\mathrm{max}} M^\vartheta)\frac{4}{M^{\vartheta,*}}
\right) \left(\Phi^1(K^{(\ell )}) -  \Phi^1(K^{*}) \right)
\\
&\le 
\left(1- 
            \eta_K^{\mathrm{min}}  \frac{2 M_\vartheta}{M^{\vartheta,*}}
\right) \left( \Phi^1(K^{(\ell )}) -  \Phi^1(K^{*}) \right) .
        \end{split}
    \end{equation}
Applying the above inequality iteratively and  Lemma    \ref{lemma:cost_difference} yield     
    \begin{equation*}
        \begin{split}
      \lVert K^{(\ell +1)} - K^* \rVert_{L^2}^2 
      \le \frac{1}{M_\vartheta}  \left(\Phi^1(K^{(\ell +1)}) - \Phi^1(K^{*}) \right)&\le 
      C_2^K
      \left(1 - \eta_K^{\mathrm{min}} C_3^K \right)^{\ell+1}   \left( \Phi^1(K^{(0)}) - \Phi^1(K^*) \right),
        \end{split}
    \end{equation*}
    which proves the convergence of $(K^{(\ell)})_{\ell \in \sN_0}$. 

    For the convergence of $(G^{(\ell)})_{\ell \in \sN_0}$, 
define $S \coloneqq \mathrm{diag}(\sqrt{\eta_G^1}, \dots, \sqrt{\eta_G^N})$ and
  the transformed cost 
  $\Hat{\Phi}^2(G) \coloneqq \Phi^2(S G)$ for any $G \in \mathcal{G}$.
  The updates of $(G^{(\ell)})_{\ell \in \sN_0}$ 
  and the fact that 
  $\nabla_G \Hat{\Phi}^2(G) =S\nabla \Phi^2(S G)$
  imply that 
$\Hat{G}^{(\ell + 1)} = \Hat{G}^{(\ell)} - \nabla_G \Hat{\Phi}^2(\Hat{G}^{(\ell)})$ for all $\ell\in \sN_0$. By Proposition \ref{prop:strongconvex_lipschitz_G}, $\Hat{\Phi}^2$ is $(\eta_G^{\mathrm{min}} m)$-strongly convex and $(\eta_G^{\mathrm{max}} L)$-Lipschitz smooth. Hence 
similar to  \cite[Theorem 2.1.15]{nesterov2013introductory}, 
if $\eta_G^{\mathrm{max}} \in (0, 1/L)$,
  $ 
        \lVert \Hat{G}^{(\ell + 1)} - \Hat{G}^{*} \rVert_{L^2}^2
        \le (1 - \eta_G^{\mathrm{min}} m)^{\ell + 1} \lVert \Hat{G}^{(0)} - \Hat{G}^{*} \rVert_{L^2}^2,
    $
    where $\Hat{G}^{*} \coloneqq S^{-1} G^{*}$ and 
    $G^{*}$ is the minimizer of $\Phi^2:\cG\to \sR$. 
Consequently, for all $\ell\in \sN_0$, 
    \begin{equation}
    \label{eq:G_error_inducation}
        \begin{split}
           &\lVert G^{(\ell +1)} - G^* \rVert_{L^2}^2 = \lVert S (\Hat{G}^{(\ell + 1)} - \Hat{G}^{*}) \rVert_{L^2}^2 \le \eta_G^{\mathrm{max}} \lVert \Hat{G}^{(\ell + 1)} - \Hat{G}^{*} \rVert_{L^2}^2 \\
           &\le \eta_G^{\mathrm{max}} (1 - \eta_G^{\mathrm{min}} m)^{\ell + 1} \lVert \Hat{G}^{(0)} - \Hat{G}^{*} \rVert_{L^2}^2 = \eta_G^{\mathrm{max}} (1 - \eta_G^{\mathrm{min}} m)^{\ell + 1} \lVert S^{-1} (G^{(0)} - G^{*}) \rVert_{L^2}^2 \\
           &\le \frac{\eta_G^{\mathrm{max}}}{\eta_G^{\mathrm{min}}} (1 - \eta_G^{\mathrm{min}} m)^{\ell + 1} \lVert G^{(0)} - G^* \rVert_{L^2}^2. 
        \end{split}
    \end{equation}
    This finishes the proof.
\end{proof}

\begin{proof}[Proof of Corollary \ref{cor:approximate_NE_symmetric}]

Write $K^*=K^{\Phi,*}$  
and $G^*=G^{\Phi,*}$  
defined in Theorem \ref{theorem:optimalpolicy}.
    By \eqref{eq:thm_K_conv_argument},
    \begin{equation*}
        \Phi^1(K^{(\ell)}) - \Phi^1(K^{*}) \le (1 - \eta_K^{\mathrm{min}} C_3^K)^\ell \left( \Phi^1(K^{(0)}) - \Phi^1(K^{*}) \right) \le \frac{\varepsilon}{2},
    \end{equation*}
    and by Proposition \ref{prop:strongconvex_lipschitz_G} and Theorem \ref{theorem:convergence_potentialcase},
    \begin{equation*}
        \Phi^2(G^{(\ell)}) - \Phi^2(G^{*}) \le \frac{L \eta_G^{\mathrm{max}}}{2 \eta_G^{\mathrm{min}}} \lVert G^{(0)} - G^* \rVert_{L^2}^2 \left( 1 - \eta_G^{\mathrm{min}} m \right)^{\ell} \le \frac{\varepsilon}{2},
    \end{equation*}
    where the last inequalities for both policy parameters hold for $\ell \ge M$ with
    \begin{equation*}
        \begin{split}
            M \coloneqq \Bigg\lceil \max\Bigg(&-\frac{1}{\log{(1 - \eta_K^{\mathrm{min}} C_3^K)}} \log{\left( \frac{2 \left( \Phi^1(K^{(0)}) - \Phi^1(K^{*}) \right)}{\varepsilon} \right)}, \\
            &-\frac{1}{\log{ (1 - \eta_G^{\mathrm{min}} m) }}  \log{\left( \frac{L \eta_G^{\mathrm{max}} \lVert G^{(0)} - G^* \rVert_{L^2}^2}{ \eta_G^{\mathrm{min}} \varepsilon} \right)} \Bigg) \Bigg\rceil.
        \end{split}
    \end{equation*}
    Then by Propositions \ref{prop:Phi_is_alpha_potential} and \ref{prop:potential_decomposition}, for any $i \in I$, $u^i \in \mathcal{V}^i$,
    \begin{equation*}
        \begin{split}
            &J^i(u_{\theta^{(\ell)}}) - J^i(u^i, u_{\theta^{(\ell)}}^{-i}) = \Phi(u_{\theta^{(\ell)}}) - \Phi(u^i, u_{\theta^{(\ell)}}^{-i}) \le \Phi(u_{\theta^{(\ell)}}) - \Phi(u^*) \\
            &= \left(\Phi^1(K^{(\ell)}) - \Phi^1(K^*)\right) + \left(\Phi^2(G^{(\ell)}) - \Phi^2(G^*)\right) \le \varepsilon,
        \end{split}
    \end{equation*}
    where $u^*$, $K^*$ and $G^*$ are   defined in Theorem \ref{theorem:optimalpolicy}.
    This completes the proof. 
\end{proof}

\section{Proofs of Section \ref{section:asymmetric}}

\subsection{Proofs of Theorem \ref{theorem:optimalpolicy_asymmetric}
and Proposition \ref{prop:ODE_system_asym_sol}}

\begin{proof}[Proof of Theorem \ref{theorem:optimalpolicy_asymmetric}]

  An NE of the game $\sG$ can be characterized using a coupled PDE system. Indeed, for each $i \in I$,
  fixing the policy profile $(u^j)_{j\not =i} \in \mathcal{V}^{-i}$, player $i$ 
  considers the following minimization problem (cf.~\eqref{eq:cost_generalsetting}):
    \begin{equation}
    \label{eq:objective_closed_loop}
    \inf_{u^i \in \mathcal{V}^i} \mathbb{E} \left[ \int_0^T \left( \lvert u^i(t,X_t^i) \rvert^2 + \langle \rho_t^{-i}, (\Bar{X}_t^i)^\top Q^i \Bar{X}_t^i \rangle \right) \, dt + \gamma^i \lvert X_T^i - d^i \rvert^2 \right],
    \end{equation}
    where 
     $\rho^{-i}_t=\bigotimes_{j\not =i} \mathcal{L}(X_t^j)$,
     with  $X^j$ satisfying   
       \eqref{eq:state_dynamics} with  $u^j$,
      and $\Bar{X}_t^i \coloneqq (x^1, \dots, x^{i-1}, X_t^i, x^{i+1}, \dots, x^N)$. Above and hereafter, we write $\langle \rho^{-i}, f \rangle$ for the integration of $f:\sR^{N}\to \sR$ with respect to all marginals apart from the $i$-th marginal. 
Treating the flow $(\rho^{-i}_t)_{t\in [0,T]}$ as a time-dependent coefficient
and applying 
the   dynamic programming approach to 
\eqref{eq:objective_closed_loop},
we can characterize 
 the optimal best-response strategy of player $i$ using an HJB equation. Assuming all players take these best-response strategies yields the following sufficient condition for an NE: 
 suppose $V^i \in \mathcal{C}^{1,2}([0,T] \times \mathbb{R}, \mathbb{R})$ and $\rho^{*,i}:[0,T]\to \cP_2(\sR)$, $i \in I$, satisfy for all $(t,x) \in [0,T] \times \mathbb{R}$,
    \begin{equation}
        \label{eq:NE_coupled_system}
        \begin{split}
            &\frac{\partial V^i}{\partial t} (t,x) + \frac{1}{2} (\sigma_t^i)^2 \frac{\partial^2 V^i}{\partial x^2}  (t,x) - \frac{1}{4} \left( \frac{\partial V^i}{\partial x}  (t,x) \right)^2 + F^i(t,x) = 0, \quad V^i(T,x) = \gamma^i \lvert x - d^i \rvert^2, \\
            &F^i(t,x) \coloneqq  \langle \rho_t^{*,-i}, (x^1, \dots, x^{i-1}, x, x^{i+1}, \dots, x^N) Q^i (x^1, \dots, x^{i-1}, x, x^{i+1}, \dots, x^N)^\top \rangle,\\
            &\rho_t^{*,-i} =\bigotimes_{j\not =i}  \rho_t^{*,j}, \quad  \rho_t^{*,j} = \mathcal{L}(X_t^{*,j}),
            \\
            &
            d X_t^{*,i} = - \frac{1}{2} \frac{\partial V^i}{\partial x} (t, X_t^{*,i}) \, dt + \sigma_t^i \, d B_t^i, \quad t \in [0,T]; \quad X_0^{*,i} = \xi^i. 
        \end{split}
    \end{equation}
    Define $u^{*,i}(t,x) \coloneqq  -\frac{1}{2}\frac{\partial V^i}{\partial x} (t,x)$ for all $i \in I$. Then $u^* = (u^{*,i})_{i \in I} \in \mathcal{V}$ is an NE of $\mathbb{G}$.

We now construct a specific solution to \eqref{eq:NE_coupled_system}. 
Given a solution 
$(P^i)_{i\in I}, (\mu^i)_{i\in I}, (\lambda^i)_{i\in I} \in \mathcal{C}([0,T], \mathbb{R}^N)$ to \eqref{eq:ODEs_asymmetric}, 
 for $i \in I$, let $\kappa^i, \vartheta^i\in \cC([0,T],\sR)$  satisfy for all $t \in [0,T]$,
       \begin{align*}
            &\frac{\partial \kappa_t^i}{\partial t} + (\sigma_t^i)^2 P_t^i - (\lambda_t^i)^2 + \sum_{j \in I\setminus \{i\}} Q_{j,j}^i (\vartheta_t^j - (\mu_t^j)^2)  + 2 \sum_{\substack{j \in I\setminus \{i,k\} \\ k \in I\setminus \{i\}}} Q_{j,k}^i \mu_t^j \mu_t^k = 0, \quad \kappa_T^i = \gamma^i (d^i)^2, 
            \\
            &\frac{\partial \vartheta_t^i}{\partial t} = -2 P_t^i \vartheta_t^i + (\sigma_t^i)^2, \quad \vartheta_0^i = \mathbb{V} [\xi^i].
        \end{align*}
    Define $V^i(t,x) \coloneqq P_t^i x^2 + 2 \lambda_t^i x + \kappa_t^i$ for all   $(t,x) \in [0,T] \times \mathbb{R}$.
    The optimal policy $u^{*,i}(t,x) =  -P^i_tx-\lambda^i_t$.
    The function $F^i$ depends on    $\rho^{*,j}$ through the mean 
    $\mathbb{E}[X_t^{*,j}]= \langle \rho_t^{*,j}, x \rangle $
    and the variance 
    $ \mathbb{V}[X_t^{*,j}]=\langle \rho_t^{*,j}, x^2 \rangle - (\langle \rho_t^{*,j}, x \rangle)^2$,
which correspond with 
$\mu^j_t$ and $\vartheta^j_t$, respectively. 
 Using  $(\mu^i)_{i\in I}$ and $(\vartheta^i)_{i\in I}$, 
 $F^i$ in \eqref{eq:NE_coupled_system} reduces to 
    \begin{equation*}
        F^i(t,x) = Q_{i,i}^i x^2 + 2 x \sum_{j \in I\setminus \{i\}} Q_{i,j}^i \mu_t^j + \sum_{j \in I\setminus \{i\}} Q_{j,j}^i (\vartheta_t^j - (\mu_t^j)^2) + 2 \sum_{\substack{j \in I\setminus \{i,k\} \\ k \in I\setminus \{i\}}} Q_{j,k}^i \mu_t^j \mu_t^k.
    \end{equation*}
    Substituting the expressions of $V^i$ and $F^i$ into the HJB equation    \eqref{eq:NE_coupled_system}, 
    we see it suffices to verify 
    for all $(t,x)\in [0,T]\times \sR$,
    \begin{equation*}
        \begin{split}
            &x^2 \left( \frac{\partial P_t^i}{\partial t} - (P_t^i)^2 + Q_{i,i}^i \right) + 2 x \left( \frac{\partial \lambda_t^i}{\partial t} - P_t^i \lambda_t^i + \sum_{j \in I\setminus \{i\}} Q_{i,j}^i \mu_t^j \right) \\
            &+ \frac{\partial \kappa_t^i}{\partial t} + (\sigma_t^i)^2 P_t^i -(\lambda_t^i)^2 + \sum_{j \in I\setminus \{i\}} Q_{j,j}^i (\vartheta_t^j - (\mu_t^j)^2)  + 2 \sum_{\substack{j \in I\setminus \{i,k\} \\ k \in I\setminus \{i\}}} Q_{j,k}^i \mu_t^j \mu_t^k = 0, \\
            &P_T^i = \gamma^i, \quad \lambda_T^i = -\gamma^i d^i, \quad  \kappa_T^i = \gamma^i (d^i)^2,
        \end{split}
    \end{equation*}
    which  holds due to the dynamics of   $(P^i)_{i\in I}, (\mu^i)_{i\in I}, (\lambda^i)_{i\in I}$ and $(\kappa^i)_{i \in I}, (\vartheta^i)_{i \in I}$.
\end{proof}

\begin{proof}[Proof of Proposition \ref{prop:ODE_system_asym_sol}]

   For all $i \in I$, 
    as $Q_{i,i}^i \ge 0$, \eqref{eq:P_asymmetric} 
   has a unique solution 
   $P^i\in \cC([0,T],\sR)$. 
   Hence the well-posedness of 
   \eqref{eq:ODEs_asymmetric}
   reduces to  the well-posedness of the  subsystem \eqref{eq:lambda_asymmetric} and \eqref{eq:ODE_system_asym_mean}.
   Suppose that 
   $(\lambda, \mu)$
   satisfies \eqref{eq:lambda_asymmetric}-\eqref{eq:ODE_system_asym_mean}, 
   differentiating $\mu$ twice with respect to $t$
   and using the system \eqref{eq:ODEs_asymmetric} yield the following boundary value problem (BVP):
    \begin{equation}
        \label{eq:BVP_mu}
        \frac{\partial^2 \mu_t}{\partial t^2}  = \Hat{Q} \mu_t, \quad t \in [0,T]; \quad \mu_0 = \mathbb{E}[\xi], \quad \frac{\partial \mu_T}{\partial t}  = - \Lambda(\mu_T - d).
    \end{equation}
    This  implies that 
    the well-posedness of \eqref{eq:ODEs_asymmetric} is equivalent to that of   \eqref{eq:BVP_mu}.
  
    Now suppose that   
the BVP \eqref{eq:BVP_mu_auxiliary} admits only the  trivial solution. This implies that the BVP \eqref{eq:BVP_mu} has at most one solution. 
  We shall  construct a solution to \eqref{eq:BVP_mu}  through a shooting method. 
 Let $\alpha _{2}\in \mathbb{R}^N$  be a constant to be determined, and  $\mu\in \cC([0,T],  \sR^{N})$
 be    the unique solution to 
  \begin{equation*}
        \frac{\partial^2 \mu_t}{\partial t^2}  = \Hat{Q} \mu_t, \quad t \in [0,T]; \quad \mu_0 = \mathbb{E}[\xi], \quad \frac{\partial \mu_0}{\partial t}  = \alpha_2.
    \end{equation*}
It is known that $\mu_t = Y_t^1 \mathbb{E}[\xi] + Y_t^2 \alpha_2$, 
where  $Y^1, Y^2 \in \mathcal{C}([0,T], \mathbb{R}^{N \times N})$ satisfy 
    \begin{equation*}
        \begin{split}
            &\frac{\partial^2 Y_t^1}{\partial t^2} = \Hat{Q} Y_t^1, \quad t \in [0,T];   \quad Y_0^1 = I_N, \quad \frac{\partial Y_0^1}{\partial t} = \mathbf{0},\\
            &\frac{\partial^2 Y_t^2}{\partial t^2} = \Hat{Q} Y_t^2, \quad t \in [0,T];  \quad Y_0^2 = \mathbf{0}, \quad \frac{\partial Y_0^2}{\partial t} = I_N.
        \end{split}
    \end{equation*}
  To ensure that $\mu$ is a solution   to \eqref{eq:BVP_mu}, it remains to find  $\alpha_2\in \sR^N$ such that $\frac{\partial \mu_T}{\partial t}  =- \Lambda(\mu_T - d)$, or equivalently, 
  $$
  \frac{\partial Y_T^1}{\partial t}  \mathbb{E}[\xi] + \frac{\partial Y_T^2}{\partial t} \alpha_2 = - \Lambda\left(Y_T^1 \mathbb{E}[\xi] + Y_T^2 \alpha_2 - d\right).
  $$
  Such an $\alpha_2$ exists if 
  the matrix 
$ 
        L_T \coloneqq \frac{\partial Y_T^2}{\partial t} + \Lambda Y_T^2
    $  is invertible. 
To show the invertibility of $L_T$, suppose $L_T x = \mathbf{0}$ for some $x \in \mathbb{R}^N$ and define $y_t \coloneqq Y_t^2 x$ for all $t \in [0,T]$. Then $y$ satisfies
    \begin{equation*}
        \frac{\partial^2 y_t}{\partial t^2} = \Hat{Q} y_t, \quad t \in [0,T]; \quad y_0 = \mathbf{0}, \quad \frac{\partial y_T}{\partial t} + \Lambda y_T = L_T x = \mathbf{0}. 
    \end{equation*}
    This implies that $y$ is a solution to  the BVP \eqref{eq:BVP_mu_auxiliary}, which along with the assumption implies that   $y \equiv \mathbf{0}$.
    This along with   
      $\frac{\partial Y^2_0}{\partial t} = I_N$ shows that
     $ 
     \frac{\partial y_0}{\partial t} = I_N x=\mathbf{0}$, and hence   $x = \mathbf{0}$. Thus $L_T$ has a trivial kernel and is invertible. This shows that 
    the BVP \eqref{eq:BVP_mu} has a unique solution and the ODE system \eqref{eq:ODEs_asymmetric}  is well-posed.

       Finally, assume  $\Hat{Q}_{\mathrm{sym}} \in \mathbb{S}^{N}_{\ge 0}$. If $\mu$ satisfies \eqref{eq:BVP_mu_auxiliary},   the integration by parts yields 
    \begin{equation*}
        \int_0^T \left\lvert \frac{\partial \mu_t}{\partial t} \right\rvert^2 \, dt = - \mu_T^\top \Lambda \mu_T - \int_0^T \left( \mu_t^\top \Hat{Q}_{\mathrm{sym}} \mu_t \right) \, dt \le 0,
    \end{equation*}
    where the inequality holds as $\Lambda \in \mathbb{S}^{N}_{\ge 0}$ and $\Hat{Q}_{\mathrm{sym}} \in \mathbb{S}^{N}_{\ge 0}$. This implies that   $\mu =   \mathbf{0}$ and proves that 
    the BVP \eqref{eq:BVP_mu_auxiliary} admits only the trivial solution. 
\end{proof}

\subsection{Proof of Lemmas \ref{lemma:K_gradient_bound} and \ref{lemma:gradient_bound}}

\begin{proof}[Proof of Lemma \ref{lemma:K_gradient_bound}]
    For all $i \in I$, $K^i\in \cK^i$, and $t \in [0,T]$, define $\Delta \phi_t^{K,i} \coloneqq P_t^{K,i} - P_t^{\Phi,K,i}$, where $P^{K,i}$ satisfies \eqref{eq:PK_ODE} and $P^{\Phi,K,i}$ satisfies \eqref{eq:ODE_P_Phi_K} with  $K^i$. Then $\Delta \phi^{K,i}$ satisfies
    \begin{equation*}
        \frac{\partial}{\partial t} \Delta \phi_t^i = 2 K_t^i \Delta \phi_t^i - \frac{1}{2} \sum_{j \in I\setminus \{i\}} (Q_{i,j}^i - Q_{i,j}^j), \quad t \in [0,T]; \quad \Delta \phi_T^i = 0.  
    \end{equation*}
    By the variaton of constants for ODEs, for all $t\in [0,T]$,
    \begin{equation*}
        \Delta \phi_t^{K,i} = -\frac{1}{2} \left( \sum_{j \in I\setminus \{i\}} (Q_{i,j}^i - Q_{i,j}^j) \right) \int_t^T \exp{\left(-2 \int_t^s K_r^i \, dr \right)} \, ds.
    \end{equation*}
    Consider $\phi^{(\ell),i} = \phi^{K^{(\ell)},i}$ for the policy parameter $K^{(\ell),i}$. By Proposition \ref{prop:uniform_bound_K},
    $\|K^{(\ell),i}\|_{L^\infty}\le \overline{C}^K_\infty$,
    \begin{equation*}
        \lvert \Delta \phi_t^{(\ell),i} \rvert \le \frac{C_Q}{4 \overline{C}_\infty^K} \left( \exp{\left( 2 \overline{C}_\infty^K (T - t) \right)} - 1 \right) \le \frac{C_Q}{4 \overline{C}_\infty^K} \exp{\left( 2 \overline{C}_\infty^K (T - t) \right)}.
    \end{equation*}
    Squaring and integrating with respect to $t$ yields,
    \begin{equation*}
        \lVert \Delta \phi^{(\ell),i} \rVert_{L^2}^2 \le \frac{C_Q^2}{4 \cdot 16 (\overline{C}_\infty^K)^3} \left( \exp{\left( 4 \overline{C}_\infty^K T \right)} - 1 \right).
    \end{equation*}
    Finally, by Lemmas \ref{lemma:cost_gradients} and \ref{lemma:cost_difference}, for all $i \in I$,
    \begin{equation*}
        \lVert ( \nabla_{K^i} J^i(K^{(\ell)},G^{(\ell)} ) - \nabla_{K^i} \Phi^1(K^{(\ell)}) )  (\vartheta^{K^{(\ell)},i})^{-1} \rVert_{L^2} = 2 \lVert \Delta \phi^{(\ell),i} \rVert_{L^2} \le \frac{C_Q}{ 4 (\overline{C}_\infty^K)^{3/2}} \exp{\left( 2 \overline{C}_\infty^K T \right)}.
    \end{equation*}
    This concludes the proof.
\end{proof}

\begin{proof}[Proof of Lemma \ref{lemma:gradient_bound}]
By \cite[Corollary 4.11]{carmona_lectures_2016},   for all $i \in I$,  the G\^ateaux derivatives of $\nabla_{G^i}J^{i}=\nabla_{G^i}J^{2,i}$ and $\nabla_{G^i}\Phi^2$ are  given by
    \begin{equation*}
        \begin{split}
            &(\nabla_{G^i} J^{2,i}(G))_t = 2 \left( G_t^i + \int_t^T \left[ Q^i \mu_s \right]_i \, ds + \gamma^i (\mu_T^i - d^i) \right), \\
            &(\nabla_{G^{i}} \Phi^2(G))_t = 2 \left( G_t^i + \frac{1}{2} \int_t^T \left[ (Q + Q^\top) \mu_s\right]_i \, ds + \gamma^i ( \mu_T^i - d^i) \right).
        \end{split}
    \end{equation*}
    Define $W \in \mathbb{R}^{N \times N}$ by
    \begin{equation*}
        W_{i,j} \coloneqq \begin{cases}
          \sum_{l \in I\setminus \{i\}}( {Q_{i,l}^i - Q_{i,l}^l }), & i=j,
           \\
             -(Q_{i,j}^i - Q_{i,j}^j), & i\not = j.
        \end{cases}
    \end{equation*}
    For all $t \in [0,T]$ and $i \in I$,
    \begin{equation*}
        [(Q + Q^\top) \mu_t]_i - 2 [Q^i \mu_t]_i = \sum_{j \in I\setminus \{i\}} (Q_{i,j}^i - Q_{i,j}^j) (\mu_t^i - \mu_t^j) = [W \mu_t]_i.
    \end{equation*}
    Then by the Cauchy-Schwarz   inequality and exchanging the order of integral and sum,
    \begin{equation*}
        \begin{split}
            \sum_{i=1}^N \lvert (\nabla_{G^i} J^{2,i}(G))_t - (\nabla_{G^{i}} \Phi^2(G) )_t \rvert^2 &= \sum_{i=1}^N \left\lvert \int_t^T [W \mu_s]_i \, ds \right\rvert^2  \le T \int_0^T \left\lvert W \mu_s \right\rvert^2 \, ds 
            \\
            &\le T \left\lVert W \right\rVert_{\mathrm{2}}^2 \left\lVert \mu \right\rVert_{L^2}^2  \le 4 T N (M^\mu_{\mathrm{aff}}(\mathfrak B))^2 (C_Q)^2,
        \end{split}
    \end{equation*}
    where the last inequality used
    the definition 
    \eqref{eq:moment_bound_afffine}
    of $M^\mu_{\mathrm{aff}}(\mathfrak B)$ and the following bound of  the spectral norm $\|\cdot\|_2$ (see \cite[Section 5.6, Problem 21]{horn2012matrix}),
    \begin{equation*}
        \left\lVert W \right\rVert_{\mathrm{2}} \le 2 \max_{i \in I} \sum_{j \in I\setminus \{i\}} \lvert Q_{i,j}^i - Q_{i,j}^j \rvert = 2 C_Q.
    \end{equation*}
    This proves the desired inequality
    $ \lVert \nabla_{G} J(K,G) - \nabla_{G} \Phi^2(G)  \rVert_{L^2} \le 2 T \sqrt{N} M^\mu_{\mathrm{aff}}(\mathfrak B ) C_Q $.
\end{proof}

\subsection{Proof of Theorem \ref{theorem:convergence_asymmetric_all}}

To prove Theorem \ref{theorem:convergence_asymmetric_all},
we first quantify the sub-optimality of 
$\Phi^1(K^{(\ell)})-\Phi^1(K^{\Phi,*})$
and 
$\Phi^2(G^{(\ell)})
- \Phi^2(G^{\Phi,*})$.
The following    descent-like lemma will be used for the convergence analysis of
$(K^{(\ell)})_{\ell\in \sN_0}$. 

\begin{lemma}
    \label{eq:descent_lemma_alphapotential}
    Let $K^{(0)}\in L^\infty([0,T],\sR^N)$,
    and 
$\overline{C}_\infty^K, M_\vartheta, M^\vartheta>0$ be the constants   in Proposition \ref{prop:uniform_bound_K}. 
Let 
$\eta_K^i \in (0, M_\vartheta/(2M^\vartheta))$ for all $i \in I$,
and $(K^{(\ell)})_{\ell\in \sN}$ be defined by \eqref{eq:potential_GD_update}.
Then for all  $\ell\in \sN_0$,     \begin{equation*}
        \begin{split}
          &  \Phi^1(K^{(\ell + 1)}) - \Phi^1(K^{(\ell)}) 
          \\
          &\le -\eta_K^{\mathrm{min}} \left( \frac{1}{2} M_\vartheta - \eta_K^{\mathrm{max}} M^\vartheta \right) \lVert \mathcal{D}_{K}^{\Phi,(\ell)} \rVert_{L^2}^2 +   \eta_K^{\mathrm{max}}  \frac{(M^\vartheta)^2+(M_\vartheta)^2}{2 M_\vartheta}    \|\mathcal{D}_{K}^{(\ell)} - \mathcal{D}_{K}^{\Phi,(\ell)}\|^2_{L^2},
        \end{split}
    \end{equation*}
    where  for all $i\in I$ and $t\in [0,T]$,
    $(\mathcal{D}^{\Phi,(\ell),i}_K)_t =  (\nabla_{K^i} \Phi^1(K^{(\ell)}))_t (\vartheta^{K^{(\ell)},i})^{-1} $
    (cf.~\eqref{eq:DK}),
    and 
    $\mathcal{D}_{K}^{(\ell),i} = \nabla_{K^i} J^i(K^{(\ell)},G^{(\ell)} )  (\vartheta^{K^{(\ell)},i})^{-1}$ (cf.~Lemma \ref{lemma:cost_gradients}).
\end{lemma}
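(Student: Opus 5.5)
The argument follows the proof of Lemma~\ref{lemma:desc_K}, with the update direction now $\mathcal{D}_K^{(\ell)}$ instead of $\mathcal{D}_K^{\Phi,(\ell)}$.

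By \eqref{eq:potential_GD_update} and Lemma~\ref{lemma:cost_gradients}, $K^{(\ell+1),i}-K^{(\ell),i}=-\eta_K^i\mathcal{D}_K^{(\ell),i}$ for every $i\in I$. We apply Lemma~\ref{lemma:cost_difference} with $K=K^{(\ell)}$ and $\tilde K=K^{(\ell+1)}$. Since $2(P^{\Phi,K^{(\ell)},i}+K^{(\ell),i})=\mathcal{D}_K^{\Phi,(\ell),i}$, this gives
\begin{equation*}
\Phi^1(K^{(\ell+1)})-\Phi^1(K^{(\ell)})=\sum_{i=1}^N\int_0^T\Big(-\eta_K^i\,\mathcal{D}_{K,t}^{\Phi,(\ell),i}\,\mathcal{D}_{K,t}^{(\ell),i}+(\eta_K^i)^2\,|\mathcal{D}_{K,t}^{(\ell),i}|^2\Big)\vartheta_t^{(\ell+1),i}\,dt .
\end{equation*}

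Write $E^i\coloneqq\mathcal{D}_K^{(\ell),i}-\mathcal{D}_K^{\Phi,(\ell),i}$, so that $\mathcal{D}_K^{(\ell),i}=\mathcal{D}_K^{\Phi,(\ell),i}+E^i$. The integrand then becomes
\begin{equation*}
\big(-\eta_K^i|\mathcal{D}^{\Phi}|^2-\eta_K^i\mathcal{D}^{\Phi}E^i+(\eta_K^i)^2|\mathcal{D}^{\Phi}+E^i|^2\big)\vartheta^{(\ell+1),i}.
\end{equation*}
Proposition~\ref{prop:uniform_bound_K} applies to the $K$-iterates regardless of symmetry, so $\vartheta^{(\ell+1),i}\in[M_\vartheta,M^\vartheta]$.

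The first term is bounded by $-\eta_K^{\min}M_\vartheta|\mathcal{D}^\Phi|^2$.

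For the cross term we use Young's inequality:
\begin{equation*}
\eta_K^i|\mathcal{D}^\Phi E^i|\vartheta\le \eta_K^i\Big(\tfrac{M_\vartheta}{2}|\mathcal{D}^\Phi|^2+\tfrac{(M^\vartheta)^2}{2M_\vartheta}|E^i|^2\Big).
\end{equation*}
The weight is chosen so that half of the good term $M_\vartheta|\mathcal{D}^\Phi|^2$ absorbs the cross term.

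For the quadratic term we must avoid a factor $2$ from $|a+b|^2\le 2|a|^2+2|b|^2$. Instead we expand
\begin{equation*}
(\eta_K^i)^2|\mathcal{D}^\Phi+E^i|^2=(\eta_K^i)^2\big(|\mathcal{D}^\Phi|^2+2\mathcal{D}^\Phi E^i+|E^i|^2\big).
\end{equation*}
The piece $(\eta_K^i)^2M^\vartheta|\mathcal{D}^\Phi|^2\le\eta_K^{\min}\eta_K^{\max}M^\vartheta|\mathcal{D}^\Phi|^2$ gives exactly the $\eta_K^{\max}M^\vartheta$ correction in the statement, up to replacing $\eta_K^i$ by $\eta_K^{\min}$ inside the product. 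Strictly, $(\eta_K^i)^2\le\eta_K^i\eta_K^{\max}$, so one combines it with the $-\eta_K^i$ factor as in Lemma~\ref{lemma:desc_K}: factor $-\eta_K^i(\tfrac12M_\vartheta-\eta_K^iM^\vartheta)\le-\eta_K^{\min}(\tfrac12M_\vartheta-\eta_K^{\max}M^\vartheta)$. This is valid because the bracket is positive when $\eta_K^{\max}<M_\vartheta/(2M^\vartheta)$. The remaining pieces are the extra cross term $2(\eta_K^i)^2\mathcal{D}^\Phi E^i\vartheta$ and $(\eta_K^i)^2|E^i|^2\vartheta$. Using $\eta_K^i<\tfrac12$ and $\eta_K^iM^\vartheta<M_\vartheta/2$, these can be merged into the previous Young step, so the total is bounded by $\eta_K^{\max}\frac{(M^\vartheta)^2+(M_\vartheta)^2}{2M_\vartheta}|E^i|^2$ with no extra $|\mathcal{D}^\Phi|^2$ beyond the budgeted amounts.

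Summing over $i$ and integrating in $t$ then yields the claim. The main obstacle is this bookkeeping of constants, so that the coefficient of $\|\mathcal{D}^{\Phi,(\ell)}_K\|^2_{L^2}$ is exactly $\tfrac12M_\vartheta-\eta_K^{\max}M^\vartheta$ and the error coefficient is exactly $\frac{(M^\vartheta)^2+(M_\vartheta)^2}{2M_\vartheta}$. If the direct merge fails, I would first try writing
\begin{equation*}
-\mathcal{D}^\Phi\mathcal{D}+\eta|\mathcal{D}|^2=-\mathcal{D}^\Phi\mathcal{D}+\eta|\mathcal{D}|^2
\end{equation*}
and applying Young's inequality once, to $\mathcal{D}^\Phi\mathcal{D}$ versus $|E|^2$, via $-ab=-\tfrac12(a^2+b^2-(a-b)^2)$. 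This gives $-\tfrac12|\mathcal{D}^\Phi|^2-\tfrac12|\mathcal{D}|^2+\tfrac12|E|^2$. The $-\tfrac12|\mathcal{D}|^2\vartheta$ then dominates $\eta|\mathcal{D}|^2\vartheta$ under the step-size condition, and the $\vartheta$-weighting is handled with $M_\vartheta$ and $M^\vartheta$. This likely produces the stated constants directly.
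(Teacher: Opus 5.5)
Your main route is the paper's proof. After the cost-difference lemma you split $\mathcal{D}=\mathcal{D}^{\Phi}+E$, combine the two cross terms into $-\eta_K^i(1-2\eta_K^i)\mathcal{D}^{\Phi}E\,\vartheta$ with $0<1-2\eta_K^i<1$ (exactly the merge you sketch), and apply Young's inequality with weight $M_\vartheta/M^\vartheta$. You then absorb $(\eta_K^i)^2|E|^2\vartheta$ via $\eta_K^i M^\vartheta<M_\vartheta/2$, keeping $\eta_K^i$ until the final passage to $\eta_K^{\mathrm{min}},\eta_K^{\mathrm{max}}$; your interim inequality $(\eta_K^i)^2\le\eta_K^{\mathrm{min}}\eta_K^{\mathrm{max}}$ is false, as you yourself correct.

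Your fallback via $ab=\tfrac12(a^2+b^2-(a-b)^2)$ is also correct and simpler. It gives the sharper bound $-\tfrac12\eta_K^{\mathrm{min}}M_\vartheta\|\mathcal{D}_K^{\Phi,(\ell)}\|_{L^2}^2+\tfrac12\eta_K^{\mathrm{max}}M^\vartheta\|\mathcal{D}_K^{(\ell)}-\mathcal{D}_K^{\Phi,(\ell)}\|_{L^2}^2$ under only $\eta_K^i\le\tfrac12$, which implies the stated inequality.
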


\begin{proof}
  For all $i\in I$, $\ell\in \sN_0$, write      $\vartheta^{(\ell+1),i} = \vartheta^{K^{(\ell+1)},i}$. By Lemma \ref{lemma:cost_difference},
    \begin{equation}
    \label{eq:descent_K_asymmetric_step1}
        \begin{split}
            &\Phi^1(K^{(\ell +1)}) - \Phi^1(K^{(\ell)}) \\
            &= \sum_{i = 1}^N \int_0^T \left( 2 (P_t^{\Phi,K^{(\ell)},i} + K_t^{(\ell),i}) (K_t^{(\ell + 1), i} - K_t^{(\ell), i}) \vartheta_t^{(\ell + 1), i} + (K_t^{(\ell + 1), i} - K_t^{(\ell), i})^2 \vartheta_t^{(\ell + 1), i} \right) \, dt \\
            &= \sum_{i = 1}^N \int_0^T \left( \mathcal{D}_{K,t}^{\Phi,(\ell),i} (-\eta_K^i \mathcal{D}_{K,t}^{(\ell),i}) \vartheta_t^{(\ell + 1),i} + (\eta_K^i)^2 \lvert \mathcal{D}_{K,t}^{(\ell),i} \rvert^2 \vartheta_t^{(\ell + 1), i} \right) \, dt.
         \end{split}
         \end{equation}
  We   derive an upper bound of the integrand of \eqref{eq:descent_K_asymmetric_step1}. By  Proposition \ref{prop:uniform_bound_K},
  for all $\eta^i_K\le 1/2$,
  \begin{align*}
  \begin{split}
     & \mathcal{D}_{K,t}^{\Phi,(\ell),i} (-\eta_K^i \mathcal{D}_{K,t}^{(\ell),i}) \vartheta_t^{(\ell + 1),i} + (\eta_K^i)^2 \lvert \mathcal{D}_{K,t}^{(\ell),i} \rvert^2 \vartheta_t^{(\ell + 1), i} 
     \\
     &= 
      \mathcal{D}_{K,t}^{\Phi,(\ell),i} (-\eta_K^i (\mathcal{D}_{K,t}^{(\ell),i} - \mathcal{D}_{K,t}^{\Phi,(\ell),i} + \mathcal{D}_{K,t}^{\Phi,(\ell),i} ) )  \vartheta_t^{(\ell + 1),i} + (\eta_K^i)^2 \lvert \mathcal{D}_{K,t}^{(\ell),i} - \mathcal{D}_{K,t}^{\Phi,(\ell),i} + \mathcal{D}_{K,t}^{\Phi,(\ell),i}  \rvert^2 \vartheta_t^{(\ell + 1),i }
      \\
      &=- \eta_K^i (\vartheta_t^{(\ell + 1),i} - \eta_K^i \vartheta_t^{(\ell + 1),i}) \lvert \mathcal{D}_{K,t}^{\Phi,(\ell),i} \rvert^2
      \\
      &\quad -\eta_K^i \left( (1 - 2 \eta_K^i) (\mathcal{D}_{K,t}^{(\ell),i} - \mathcal{D}_{K,t}^{\Phi,(\ell),i}) \mathcal{D}_{K,t}^{\Phi,(\ell),i} - \eta_K^i \lvert\mathcal{D}_{K,t}^{(\ell),i} - \mathcal{D}_{K,t}^{\Phi,(\ell),i}\rvert^2 \right) \vartheta_t^{(\ell + 1),i}
      \\
      &\le 
      - \eta_K^i (M_\vartheta - \eta_K^i M^\vartheta) \lvert \mathcal{D}_{K,t}^{\Phi,(\ell),i} \rvert^2
      \\
      &\quad +\eta_K^i  \left( (1 - 2 \eta_K^i) |\mathcal{D}_{K,t}^{(\ell),i} - \mathcal{D}_{K,t}^{\Phi,(\ell),i}| |\mathcal{D}_{K,t}^{\Phi,(\ell),i} |+  \eta_K^i  \lvert\mathcal{D}_{K,t}^{(\ell),i} - \mathcal{D}_{K,t}^{\Phi,(\ell),i}\rvert^2 \right) M^\vartheta,
   \end{split}   
  \end{align*}
      which along with the following estimate 
   $$
   (1 - 2 \eta_K^i) |\mathcal{D}_{K,t}^{(\ell),i} - \mathcal{D}_{K,t}^{\Phi,(\ell),i}| |\mathcal{D}_{K,t}^{\Phi,(\ell),i} |
   \le \frac{1}{2}\left(\frac{M_\vartheta}{M^\vartheta}|\mathcal{D}_{K,t}^{\Phi,(\ell),i} |^2 +(1 - 2 \eta_K^i)^2\frac{M^\vartheta}{M_\vartheta}|\mathcal{D}_{K,t}^{(\ell),i} - \mathcal{D}_{K,t}^{\Phi,(\ell),i}|^2\right)
   $$
  shows that if $\eta_K^{\mathrm{max}} \le M_\vartheta/(2M^\vartheta)$,
\begin{align*}
  \begin{split}
     & \mathcal{D}_{K,t}^{\Phi,(\ell),i} (-\eta_K^i \mathcal{D}_{K,t}^{(\ell),i}) \vartheta_t^{(\ell + 1),i} + (\eta_K^i)^2 \lvert \mathcal{D}_{K,t}^{(\ell),i} \rvert^2 \vartheta_t^{(\ell + 1), i} 
     \\
      &\le 
      - \eta_K^i \left(\frac{1}{2}M_\vartheta - \eta_K^i M^\vartheta \right) \lvert \mathcal{D}_{K,t}^{\Phi,(\ell),i} \rvert^2
      \\
      &\quad +  \eta_K^i \left(\frac{1}{2}(1 - 2 \eta_K^i)^2\frac{M^\vartheta}{M_\vartheta} + \eta_K^i  \right)\lvert\mathcal{D}_{K,t}^{(\ell),i} - \mathcal{D}_{K,t}^{\Phi,(\ell),i}\rvert^2   M^\vartheta
      \\
      &\le 
      - \eta_K^{\mathrm{min}} \left(\frac{1}{2}M_\vartheta - \eta_K^{\mathrm{max}} M^\vartheta \right) \lvert \mathcal{D}_{K,t}^{\Phi,(\ell),i} \rvert^2
       +  \eta_K^{\mathrm{max}}  \frac{(M^\vartheta)^2+(M_\vartheta)^2}{2 M_\vartheta}    \lvert\mathcal{D}_{K,t}^{(\ell),i} - \mathcal{D}_{K,t}^{\Phi,(\ell),i}\rvert^2.  
   \end{split}   
  \end{align*}
  This along with \eqref{eq:descent_K_asymmetric_step1}
  yields the desired result.
 
\end{proof}

Using Lemma \ref{eq:descent_lemma_alphapotential},
the following proposition quantifies the sub-optimality   of $(K^{(\ell)})_{\ell\in \sN_0}$.
\begin{proposition}
\label{proposition:convergence_alphapotential_slope}
     Suppose Assumption \ref{assumption:Q_positivesemidef} holds, and let $K^{(0)} \in L^\infty([0,T], \mathbb{R}^N)$. 
     Then there exists $\Bar{\eta} > 0$ such that if the learning rates satisfy $\eta_K^i \in (0, \Bar{\eta})$, $i \in I$, the iterates $(K^{(\ell)})_{\ell \in \mathbb{N}_0}$  generated by  Algorithm~\ref{algo_projected} satisfy the following property:
     for all $\varepsilon > 0$, there exists $M \in \mathbb{N}_0$, depending linearly on $\log{(1/\varepsilon)}$, such that for all $\ell\ge M$,
    \begin{equation*}
        \Phi^1(K^{(\ell)}) - \Phi^1(K^{\Phi,*}) \le \varepsilon + \delta_1(C_Q),
    \end{equation*}
    where 
      $K^{\Phi,*}$ is  the minimizer of $\Phi^1: \mathcal{K} \to \mathbb{R}$ defined in Proposition \ref{prop:graddom_K},
      and
     \begin{equation}
        \label{eq:Bar_varepsilon_1}
        \delta_1(C_Q) \coloneqq 
        \frac{\eta_K^{\mathrm{max}}  }{\eta_K^{\mathrm{min}} C_3^K }\frac{(M^\vartheta)^2+(M_\vartheta)^2}{  M_\vartheta}      \frac{\exp{\left( 4 \overline{C}_\infty^K T \right)}}{ 16 (\overline{C}_\infty^K)^{3}} N(C_Q)^2,
    \end{equation}
    with   $C^K_3$  defined in \eqref{eq:CK123},
    and 
$\overline{C}_\infty^K,   M^\vartheta, M_\vartheta>0$
given  in Proposition \ref{prop:uniform_bound_K}.
\end{proposition}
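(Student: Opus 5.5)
We follow the argument used for the symmetric case in the proof of Theorem~\ref{theorem:convergence_potentialcase}, treating the $K$-update of Algorithm~\ref{algo_projected} as a biased normalized gradient descent on $\Phi^1$. The $K$-update does not involve the projection and coincides with \eqref{eq:potential_GD_update}. Proposition~\ref{prop:uniform_bound_K}, which does not rely on Assumption~\ref{assumption:Qij_symmetry}, therefore yields the uniform bounds $\|K^{(\ell),i}\|_{L^\infty}\le \overline{C}_\infty^K$ and $\vartheta^{(\ell),i}_t\in[M_\vartheta,M^\vartheta]$ for all iterates. We take $\bar\eta \coloneqq \min\bigl(M_\vartheta/(4M^\vartheta),\,1/2\bigr)$, or a smaller constant if needed. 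Then for $\eta_K^i<\bar\eta$ we have $\tfrac12 M_\vartheta-\eta_K^{\max}M^\vartheta\ge \tfrac14 M_\vartheta$, and Lemmas~\ref{lemma:K_gradient_bound} and~\ref{eq:descent_lemma_alphapotential} both apply.

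\textbf{Step 1 (one-step recursion).} Write $e_\ell\coloneqq \Phi^1(K^{(\ell)})-\Phi^1(K^{\Phi,*})$. Lemma~\ref{eq:descent_lemma_alphapotential} bounds $e_{\ell+1}-e_\ell$. The bias term $\|\mathcal D_K^{(\ell)}-\mathcal D_K^{\Phi,(\ell)}\|_{L^2}^2$ is a sum over $i\in I$ of squared player-wise norms. Lemma~\ref{lemma:K_gradient_bound} bounds each of these by $\exp(4\overline{C}_\infty^K T)C_Q^2/(16(\overline{C}_\infty^K)^3)$, so the full term is at most $N$ times this quantity; this accounts for the factor $N$ in \eqref{eq:Bar_varepsilon_1}. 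For the descent term we use the gradient dominance of Proposition~\ref{prop:graddom_K}, which holds under Assumption~\ref{assumption:Q_positivesemidef} alone: $\|\mathcal D_K^{\Phi,(\ell)}\|_{L^2}^2\ge 4e_\ell/M^{\vartheta,*}$. Combining the two gives
\[
e_{\ell+1}\le \bigl(1-\eta_K^{\min}\rho\bigr)e_\ell + \eta_K^{\max}\frac{(M^\vartheta)^2+(M_\vartheta)^2}{2M_\vartheta}\,N\,\frac{\exp(4\overline{C}_\infty^K T)}{16(\overline{C}_\infty^K)^3}C_Q^2,
\]
where $\rho = 4(\tfrac12M_\vartheta-\eta_K^{\max}M^\vartheta)/M^{\vartheta,*}$. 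We need $\rho\ge C_3^K/2 = M_\vartheta/M^{\vartheta,*}$, which holds as soon as $\eta_K^{\max}\le M_\vartheta/(4M^\vartheta)$.

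\textbf{Step 2 (unrolling).} Set $q\coloneqq 1-\eta_K^{\min}C_3^K/2$ and let $b$ denote the additive constant above. Iterating the recursion gives $e_\ell\le q^\ell e_0 + b/(1-q)$, and $b/(1-q)=2b/(\eta_K^{\min}C_3^K)$, which is exactly $\delta_1(C_Q)$ in \eqref{eq:Bar_varepsilon_1}. Choosing $M\coloneqq\lceil \log(e_0/\varepsilon)/(-\log q)\rceil$ makes $q^\ell e_0\le\varepsilon$ for all $\ell\ge M$, and this $M$ is linear in $\log(1/\varepsilon)$.

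\textbf{Main obstacle.} The delicate point is that $q\in(0,1)$ and that the constants are consistent. Proposition~\ref{prop:graddom_K} involves $M^{\vartheta,*}$, the variance bound at the \emph{minimizer} of $\Phi^1$, which under asymmetry need not be an iterate or an equilibrium. We must show that $\eta_K^{\min}C_3^K/2<1$, that is, $M_\vartheta\eta_K^{\min}<M^{\vartheta,*}$. This follows because $M^{\vartheta,*}\ge \inf_{i,t}\vartheta^{K^{\Phi,*},i}_t>0$ and we may further shrink $\bar\eta$ so that $\bar\eta\, C_3^K<1$. Since $K^{\Phi,*}=-P^i$ solves \eqref{eq:P_symmetric}, it is bounded and $M^{\vartheta,*}$ is finite and positive by Gr\"onwall's inequality. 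The second check is that the factors in front of the bias term match \eqref{eq:Bar_varepsilon_1} exactly. If Step 1 produces a slightly different constant, for instance from the factor $\tfrac12$, we adjust the choice of $\bar\eta$ accordingly, so that the ratio $\eta_K^{\max}/\eta_K^{\min}$ carries over unchanged.
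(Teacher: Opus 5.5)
Your proposal is correct and follows the paper's proof: the biased descent lemma, the per-player bias bound of Lemma~\ref{lemma:K_gradient_bound} summed over the $N$ players, gradient dominance from Proposition~\ref{prop:graddom_K}, and unrolling the recursion with rate $1-\eta_K^{\min}C_3^K/2$, which yields exactly $\delta_1(C_Q)$. The paper's step-size condition $\eta_K^{\max}\le\min(C_1^K/2,\,2/C_3^K)$ is the same as your $\bar\eta=M_\vartheta/(4M^\vartheta)$ together with the extra shrinking you flag to keep the contraction factor in $[0,1)$. That extra shrinking is needed precisely because, under asymmetry, $M_\vartheta\le M^{\vartheta,*}$ is no longer automatic.
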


\begin{proof}
Define 
$\Delta _K \coloneqq \eta_K^{\mathrm{max}}  \frac{(M^\vartheta)^2+(M_\vartheta)^2}{2 M_\vartheta}      \frac{N(C_Q)^2}{ 16 (\overline{C}_\infty^K)^{3}} \exp{\left( 4 \overline{C}_\infty^K T \right)}$.
    Using similar arguments as those for  \eqref{eq:thm_K_conv_argument}, by Proposition \ref{prop:graddom_K} and Lemmas 
        \ref{lemma:K_gradient_bound} and
    \ref{eq:descent_lemma_alphapotential}, if $\eta_K^{\mathrm{max}} \le \min{(C^K_1/2, 2/  C_3^K)}$,
    with  $C^K_1$  defined in \eqref{eq:CK123},
    $ \frac{1}{2} M_\vartheta - \eta_K^{\mathrm{max}} M^\vartheta\ge \frac{1}{2} M_\vartheta - \frac{C^K_1}{2} M^\vartheta = \frac{M_\vartheta}{4} $, and  
    \begin{equation*}
        \begin{split}
            \Phi^1(K^{(\ell)}) - \Phi^1(K^{\Phi,*})  
&\le \left(1- 
           \eta_K^{\mathrm{min}} \left( \frac{1}{2} M_\vartheta - \eta_K^{\mathrm{max}} M^\vartheta \right) \frac{4}{M^{\vartheta,*}}
\right)  \left(\Phi^1(K^{(\ell-1)}) - \Phi^1(K^{\Phi,*}) \right) + \Delta _K
            \\
            &\le \left(1 - \frac{1}{2} \eta_K^{\mathrm{min}} C_3^K \right) \left(\Phi^1(K^{(\ell-1)}) - \Phi^1(K^{\Phi,*}) \right) + \Delta _K  \\
            &\le \left(1 - \frac{1}{2} \eta_K^{\mathrm{min}} C_3^K \right)^{\ell} \left(\Phi^1(K^{(0)}) - \Phi^1(K^{\Phi,*}) \right) + \frac{2 \Delta_K}{\eta_K^{\mathrm{min}} C_3^K}  \\
            &\le \varepsilon + \delta_1(C_Q),
        \end{split}
    \end{equation*}
    provided that $\ell \ge M\coloneqq \left\lceil -\frac{1}{\log{(1 -   \eta_K^{\mathrm{min}} C_3^K/2)}} \log{\left(\frac{\Phi^1(K^{(0)}) - \Phi^1(K^{\Phi,*}) }{\varepsilon} \right)}\right\rceil
   $. This completes the proof.
\end{proof}

We then quantify the sub-optimality of $(G^{(\ell)})_{\ell\in \sN_0}$.

\begin{proposition} 
    \label{proposition:convergence_alpha_potential_projection}
    Suppose Assumptions \ref{assumption:Q_positivesemidef} and \ref{assumption:policy_class} hold, and let $m, L > 0$ be given in Proposition \ref{prop:strongconvex_lipschitz_G}.  For all
     $G^{(0)}=(G^{(0),i})_{i\in I} \in \mathcal{G}$
     with $\max_{i\in I}\|G^{(0),i}\|_{L^2}\le \overline{C}_G$, and all
    learning rates $\eta_G^i \in (0, 1/L)$, $i \in I$, satisfying $\eta_G^{\mathrm{min}} > \eta_G^{\mathrm{max}} / (1 + m \eta_G^{\mathrm{max}})$, the iterates $(G^{(\ell)})_{\ell \in \mathbb{N}_0}$  generated by Algorithm~\ref{algo_projected} satisfy the following property: for all $\varepsilon > 0$, there exists $M \in \mathbb{N}_0$, depending linearly on $\log{(1/\varepsilon)}$, such that for all $\ell\ge M$,
    \begin{equation*}
        \Phi^{2}(G^{(\ell)}) - \Phi^{2}(G^{\Phi,*}) \le \varepsilon + \delta_2(C_Q),
    \end{equation*}
    where 
      $G^{\Phi,*}$ is the minimizer of $\Phi^2: \mathcal{G} \to \mathbb{R}$, and 
    \begin{equation}
        \label{eq:Bar_varepsilon_2_P}
        \delta_2(C_Q) \coloneqq 2L \left( \frac{  \eta_G^{\mathrm{max}} T M^\mu_{\mathrm{aff}}(\overline{C}_G)}{1 - \sqrt{\eta_G^{\mathrm{max}} (1 - \eta_G^{\mathrm{min}} m) / \eta_G^{\mathrm{min}} }}  \right)^2 N(C_Q)^2.
    \end{equation}

\end{proposition}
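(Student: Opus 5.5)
We treat the $G$-update in \eqref{eq:alphapot_G_update_projection} as a \emph{biased, preconditioned projected gradient descent} on the strongly convex, smooth function $\Phi^2$. We mirror the rescaling from the proof of Theorem~\ref{theorem:convergence_potentialcase} and then add a perturbation argument. Write $S=\mathrm{diag}(\sqrt{\eta_G^1},\dots,\sqrt{\eta_G^N})$ and $G^*=G^{\Phi,*}$. Set $b^{(\ell)}\coloneqq \nabla_G J(K^{(\ell)},G^{(\ell)})-\nabla_G\Phi^2(G^{(\ell)})$. The projections keep $\max_i\|G^{(\ell),i}\|_{L^2}\le\overline{C}_G$ for every $\ell$, so by induction Lemma~\ref{lemma:gradient_bound} with $\mathfrak B=\overline{C}_G$ gives the uniform bias bound
\[
\|b^{(\ell)}\|_{L^2}\le 2T\sqrt N C_Q M^\mu_{\mathrm{aff}}(\overline{C}_G).
\]
The update then reads $G^{(\ell+1),i}=\cP_{\overline{C}_G}\big(G^{(\ell),i}-\eta_G^i(\nabla_{G^i}\Phi^2(G^{(\ell)})+b^{(\ell),i})\big)$. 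Assumption~\ref{assumption:policy_class} gives $\|G^{*,i}\|_{L^2}\le\overline{C}_G$, so each coordinate $G^{*,i}$ lies in the (convex) ball. Hence $\cP_{\overline{C}_G}(G^{*,i})=G^{*,i}$, and the projection is nonexpansive coordinatewise.

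The key step is a one-step contraction. By nonexpansiveness,
\[
\|G^{(\ell+1)}-G^*\|_{L^2}\le \|(G^{(\ell)}-S^2\nabla\Phi^2(G^{(\ell)}))-(G^*-S^2\nabla\Phi^2(G^*))\|_{L^2}+\eta_G^{\max}\|b^{(\ell)}\|_{L^2},
\]
where we used $\nabla\Phi^2(G^*)=0$. For the first term, pass to the rescaled variables $\hat G=S^{-1}G$ and $\hat\Phi^2(\hat G)=\Phi^2(S\hat G)$. As in the proof of Theorem~\ref{theorem:convergence_potentialcase}, $\hat\Phi^2$ is $\eta_G^{\min}m$-strongly convex and $\eta_G^{\max}L$-smooth with $\eta_G^{\max}L<1$. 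The unit-step gradient map of $\hat\Phi^2$ is therefore a contraction with squared factor $1-\eta_G^{\min}m$ (Nesterov). Converting norms back with $\|S\|\le\sqrt{\eta_G^{\max}}$ and $\|S^{-1}\|\le 1/\sqrt{\eta_G^{\min}}$ gives the factor
\[
\rho\coloneqq\sqrt{\eta_G^{\max}(1-\eta_G^{\min}m)/\eta_G^{\min}}.
\]
The condition $\eta_G^{\min}>\eta_G^{\max}/(1+m\eta_G^{\max})$ is exactly equivalent to $\rho<1$. We therefore obtain
\[
e_{\ell+1}\le\rho e_\ell+\eta_G^{\max}\cdot 2T\sqrt N C_Q M^\mu_{\mathrm{aff}}(\overline{C}_G),
\qquad e_\ell\coloneqq\|G^{(\ell)}-G^*\|_{L^2}.
\]
Unrolling this recursion gives
\[
e_\ell\le \rho^\ell e_0+\frac{2\eta_G^{\max}T\sqrt N C_Q M^\mu_{\mathrm{aff}}(\overline{C}_G)}{1-\rho}.
\]

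To finish, convert the distance bound into a function-value bound. Proposition~\ref{prop:strongconvex_lipschitz_G} with $\nabla\Phi^2(G^*)=0$ gives $\Phi^2(G^{(\ell)})-\Phi^2(G^*)\le\frac L2 e_\ell^2$. Using $(a+b)^2\le 2a^2+2b^2$,
\[
\Phi^2(G^{(\ell)})-\Phi^2(G^*)\le L\rho^{2\ell}e_0^2+L\frac{4(\eta_G^{\max})^2T^2 N C_Q^2 (M^\mu_{\mathrm{aff}})^2}{(1-\rho)^2}.
\]
The second term equals $\delta_2(C_Q)$ as defined in \eqref{eq:Bar_varepsilon_2_P}. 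For the first term, $e_0\le 2\sqrt N\,\overline{C}_G$ is finite, so $L\rho^{2\ell}e_0^2\le\varepsilon$ once $\ell\ge M\coloneqq\lceil\log(Le_0^2/\varepsilon)/(2\log(1/\rho))\rceil$, which is linear in $\log(1/\varepsilon)$.

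The main obstacle is the contraction step. Because the step sizes are heterogeneous, the projection is Euclidean in the original variables but the contraction holds in the $S$-rescaled metric. This forces the crude norm conversion that yields $\rho$ with the ratio $\eta_G^{\max}/\eta_G^{\min}$, which is why the extra learning-rate condition is needed. We must check that nonexpansiveness is applied before rescaling, so that the projection never has to be analysed in the rescaled metric; this is the reason for the ordering of the argument above.
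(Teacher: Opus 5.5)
Your route is the paper's: the bias bound from Lemma~\ref{lemma:gradient_bound} with $\mathfrak B=\overline{C}_G$, nonexpansiveness of $\cP_{\overline{C}_G}$ applied in the original variables, the $S$-rescaled Nesterov contraction giving $\rho$, and unrolling the recursion. Everything up to
\[
e_\ell\le\rho^\ell e_0+\hat\delta_2,\qquad \hat\delta_2\coloneqq\frac{2\eta_G^{\max}T\sqrt N C_Q M^\mu_{\mathrm{aff}}(\overline{C}_G)}{1-\rho},
\]
is correct, including the equivalence of the learning-rate condition with $\rho<1$.

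The gap is in the last step, where a factor of $2$ is lost. After using $(a+b)^2\le 2a^2+2b^2$, your second term is $L\hat\delta_2^2=4L(\eta_G^{\max})^2T^2NC_Q^2(M^\mu_{\mathrm{aff}}(\overline{C}_G))^2/(1-\rho)^2$. By \eqref{eq:Bar_varepsilon_2_P}, $\delta_2(C_Q)=\frac L2\hat\delta_2^2$, so your second term is $2\delta_2(C_Q)$, not $\delta_2(C_Q)$ as you claim. As written, you only prove the bound $\varepsilon+2\delta_2(C_Q)$.

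The fix is not to symmetrize the square but to expand it:
\[
\Phi^2(G^{(\ell)})-\Phi^2(G^*)\le\frac L2\big(\rho^\ell e_0+\hat\delta_2\big)^2=\delta_2(C_Q)+\frac L2\rho^{2\ell}e_0^2+L\rho^{\ell}e_0\hat\delta_2 .
\]
Then absorb the last two terms into $\varepsilon$ by making each at most $\varepsilon/2$. The cross term decays only like $\rho^{\ell}$, not $\rho^{2\ell}$. So take $M$ to be the ceiling of the maximum of $\log(Le_0^2/\varepsilon)/(2\log(1/\rho))$ and $\log(2Le_0\hat\delta_2/\varepsilon)/\log(1/\rho)$. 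This is still linear in $\log(1/\varepsilon)$, and it is exactly how the paper chooses $M$.
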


\begin{proof}
    Under Assumption \ref{assumption:Q_positivesemidef}, 
    by Proposition \ref{prop:strongconvex_lipschitz_G},
    $  \Phi^2$ is $m$-strongly convex and $L$-Lipschitz smooth.  Hence  for $\eta_G^i \in (0, 1/L)$ for all $i \in I$, similar arguments to those used for \eqref{eq:G_error_inducation} yield
    \begin{equation}
        \label{eq:helper_graddesc}
        \lVert G^{(\ell)} - \eta_G \odot \nabla_G \Phi^2(G^{(\ell)}) - G^{\Phi,*} \rVert_{L^2}^2 \le \frac{\eta_G^{\mathrm{max}}}{\eta_G^{\mathrm{min}}} (1 - \eta_G^{\mathrm{min}} m) \lVert G^{(\ell)} - G^{\Phi,*} \rVert_{L^2}^2,
    \end{equation}
    where $\odot$ denotes componentwise multiplication.
    The conditions on $\eta_G^{\mathrm{max}}$ and 
    $\eta_G^{\mathrm{min}}$ imply that 
$\eta_G^{\mathrm{max}} (1 - \eta_G^{\mathrm{min}} m) / \eta_G^{\mathrm{min}}\in (0,1)$. Moreover, $G^{\Phi,*,i} = \mathcal{P}_{\overline{C}_G}(G^{\Phi,*,i})$ by Assumption \ref{assumption:policy_class}.
Let 
$\Delta_G= 2 T \sqrt{N} C_Q M^\mu_{\mathrm{aff}}(\overline{C}_G)$.
By the non-expansiveness of 
 the projection operator $\mathcal{P}_{\overline{C}_G}$ and  Lemma \ref{lemma:gradient_bound}, 
    \begin{equation*}
        \begin{split}
            \lVert G^{(\ell + 1)} - G^{\Phi,*} \rVert_{L^2} &= \left( \sum_{i = 1}^N \lVert \mathcal{P}_{\overline{C}_G}\left( G^{(\ell),i} - \eta_G^i \nabla_{G^i} J^i(K^{(\ell)}, G^{(\ell)} ) \right) - G^{\Phi,*,i} \rVert_{L^2}^2 \right)^{1/2} \\
            &\le \left( \sum_{i = 1}^N \lVert G^{(\ell),i} - \eta_G^i \nabla_{G^i} J^i(K^{(\ell)}, G^{(\ell)} ) - G^{\Phi,*,i} \rVert_{L^2}^2 \right)^{1/2} \\
            &= \lVert G^{(\ell)} - \eta_G \odot \nabla_G J(K^{(\ell)}, G^{(\ell)})  - G^{\Phi,*} \rVert_{L^2} \\
            &\le \lVert G^{(\ell)} - \eta_G \odot \nabla_G \Phi^2(G^{(\ell)})  - G^{\Phi,*} \rVert_{L^2} + \lVert \eta_G \odot (\nabla_{G} J(K^{(\ell)},G^{(\ell)}) - \nabla_{G} \Phi^2(G^{(\ell)})) \rVert_{L^2} \\
            &\le \left( \frac{\eta_G^{\mathrm{max}}}{\eta_G^{\mathrm{min}}} (1 - \eta_G^{\mathrm{min}} m)\right)^{1/2} \lVert G^{(\ell)} - G^{\Phi,*} \rVert_{L^2} + \eta_G^{\mathrm{max}} \Delta_G \\
            &\le \left( \frac{\eta_G^{\mathrm{max}}}{\eta_G^{\mathrm{min}}} (1 - \eta_G^{\mathrm{min}} m)\right)^{(\ell + 1)/2} \lVert G^{(0)} - G^{\Phi,*} \rVert_{L^2} + \frac{\eta_G^{\mathrm{max}} \Delta_G}{1 - \sqrt{\eta_G^{\mathrm{max}} (1 - \eta_G^{\mathrm{min}} m) / \eta_G^{\mathrm{min}} }} \\
            &= \left( \frac{\eta_G^{\mathrm{max}}}{\eta_G^{\mathrm{min}}} (1 - \eta_G^{\mathrm{min}} m)\right)^{(\ell + 1)/2} \lVert G^{(0)} - G^{\Phi,*} \rVert_{L^2} + \Hat{\delta}_2,
        \end{split}
    \end{equation*}
    where 
     $\Hat{\delta}_2 \coloneqq (\eta_G^{\mathrm{max}} \Delta_G) / (1 - \sqrt{\eta_G^{\mathrm{max}} (1 - \eta_G^{\mathrm{min}} m) / \eta_G^{\mathrm{min}}})$. This along with  Proposition \ref{prop:strongconvex_lipschitz_G} imply that 
    \begin{equation*}
        \begin{split}
            \Phi^{2}(G^{(\ell)}) &- \Phi^{2}(G^{\Phi,*}) \le \frac{L}{2} \lVert G^{(\ell)} - G^{\Phi,*} \rVert_{L^2}^2 \\
            &\le \frac{L}{2} \left(  \left( \frac{\eta_G^{\mathrm{max}}}{\eta_G^{\mathrm{min}}} (1 - \eta_G^{\mathrm{min}} m)\right)^{\ell/2} \lVert G^{(0)} - G^{\Phi,*} \rVert_{L^2} + \Hat{\delta}_2 \right)^2 \\
            &\le \varepsilon + \frac{L}{2} (\Hat{\delta}_2)^2 = \varepsilon + \delta_2(C_Q),
        \end{split}
    \end{equation*}
    where the last inequality holds for all $\ell \ge M$ with
    \begin{equation*}
        \begin{split}
            M &\coloneqq \left\lceil -\frac{\max \Bigg( \log{\left(\frac{L \lVert G^{(0)} - G^{\Phi,*} \rVert_{L^2}^2}{\varepsilon} \right)}, 2\log{\left( \frac{2 L \lVert G^{(0)} - G^{\Phi,*} \rVert_{L^2} \Hat{\delta}_2 }{\varepsilon} \right)} \Bigg)}{\log{(\eta_G^{\mathrm{max}}(1 - \eta_G^{\mathrm{min}} m )/\eta_G^{\mathrm{min}})}} \right\rceil.
        \end{split}
    \end{equation*}
    This concludes the proof.
\end{proof}

We are now ready to prove 
Theorem \ref{theorem:convergence_asymmetric_all}. 

\begin{proof}[Proof of Theorem \ref{theorem:convergence_asymmetric_all}]
For all $\ell\in \sN$, $u_{\theta^{(\ell)}} \in  \mathcal{V}_{\mathrm{b}}$. 
Let $u_{\theta^{\Phi,*}}$ be the minimizer of $\Phi: \mathcal{V} \to \mathbb{R}$ given in \eqref{eq:optimal_policy}, 
with    the policy parameters $\theta^{\Phi,*} = (K^{\Phi,*}, G^{\Phi,*})$.
By Proposition \ref{prop:Phi_is_alpha_potential},
$\Phi$ is an $\alpha$-potential function of the game $\sG=(I,(J^i)_{i\in I},\cV_{\rm b})$, with $\alpha \coloneqq  \left( M_{\mathrm{b}}^\vartheta + 3 (M_{\mathrm{b}}^\mu)^2 \right) C_Q$.
Moreover, 
under the current assumptions, Propositions~\ref{proposition:convergence_alphapotential_slope} and~\ref{proposition:convergence_alpha_potential_projection} apply.
Thus
for all $\varepsilon>0$,
there exists $M\in \sN$,
such that for all $\ell\ge M$,
$i \in I$ and $u^i \in \mathcal{V}_{\mathrm{b}}^i$, 
    \begin{equation*}
        \begin{split}
            J^i(u_{\theta^{(\ell)}}) &- J^i(u^i, u_{\theta^{(\ell)}}^{-i}) \le \Phi(u_{\theta^{(\ell)}}) - \Phi(u^i, u_{\theta^{(\ell)}}^{-i}) + \alpha \le \Phi(u_{\theta^{(\ell)}}) - \Phi(u_{\theta^{\Phi,*}}) + \alpha \\
            &= \Phi(K^{(\ell)}, G^{(\ell)}) - \Phi(K^{\Phi,*}, G^{\Phi,*}) + \alpha \\
            &= \left(\Phi^{1}(K^{(\ell)}) - \Phi^{1}(K^{\Phi,*}) \right) + \left(\Phi^{2}(G^{(\ell)}) - \Phi^{2}(G^{\Phi,*}) \right) + \alpha
            \le \varepsilon + \delta(C_Q),
        \end{split}
    \end{equation*}
    where the last inequality used Propositions \ref{proposition:convergence_alphapotential_slope} and \ref{proposition:convergence_alpha_potential_projection}, and 
    \begin{equation}
        \label{eq:delta}
        \delta(C_Q) \coloneqq \delta_1(C_Q) + \delta_2(C_Q) + \alpha,
    \end{equation}
    where $\delta_1(C_Q)$ and  $\delta_2(C_Q)$ 
    are defined in 
\eqref{eq:Bar_varepsilon_1}
and \eqref{eq:Bar_varepsilon_2_P}, respectively. 
This finishes the proof. 
\end{proof}

\section*{Acknowledgments}
PP is supported by the Roth Scholarship by Imperial College London and the Excellence Scholarship by Gesellschaft f\"ur Forschungsf\"orderung Nieder\"osterreich (a subsidiary of the province of Lower Austria).
YZ is  partially supported by a grant from the Simons Foundation, and  funded in part by JPMorgan Chase \& Co. Any views or opinions expressed herein are solely those of the authors listed, and may differ from the views and opinions expressed by JPMorgan Chase \& Co.~or its affiliates. This material is not a product of the Research Department of J.P.~Morgan Securities LLC. This material does not constitute a solicitation or offer in any jurisdiction.
 
The authors would like to thank the Isaac Newton Institute for Mathematical Sciences, Cambridge, for support and hospitality during the programme Bridging Stochastic Control And Reinforcement Learning, where work on this paper was undertaken. 
This work was supported by EPSRC grant EP/V521929/1.

\bibliographystyle{plain}
\bibliography{literature.bib}

\end{document}